\title{HODLR$\MakeLowercase{d}$D: A new Black-box fast algorithm for $N$-body problems in $\MakeLowercase{d}$-dimensions with guaranteed error bounds}
\author{Ritesh Khan\thanks{Department of Mathematics, IIT Madras, Chennai, India. \email{\lowercase{khanritesh28@gmail.com}}, \email{\lowercase{kandappanva@gmail.com}}}
\and V A Kandappan\footnotemark[1]
\and Sivaram Ambikasaran\thanks{Department of Mathematics and Robert Bosch Centre for Data Science \& Artificial Intelligence, IIT Madras, Chennai, India. \email{\lowercase{sivaambi@alumni.stanford.edu}}}}
\pgfplotsset{compat=1.16} 
\newcommand{\dsum}{\displaystyle\sum}
\newcommand{\dprod}{\displaystyle\prod}
\newcommand{\dcap}{\displaystyle\cap}
\newcommand{\xb}{\pmb{x}}
\newcommand{\yb}{\pmb{y}}
\newcommand{\bkt}[1]{\left(#1\right)}
\newcommand{\abs}[1]{\left\lvert#1\right\rvert}
\newcommand{\magn}[1]{\left\lVert#1\right\rVert}
\newcommand{\ceil}[1]{\left\lceil#1\right\rceil}
\newcommand{\Pc}{\mathcal{P}}
\newcommand{\Rb}{\mathbb{R}}
\newcommand{\Cb}{\mathbb{C}}
\newcommand{\Zb}{\mathbb{Z}}
\newcommand{\cmt}[1]{\iffalse {#1} \fi}
\newcommand*{\addFileDependency}[1]{
  \typeout{(#1)}
  \@addtofilelist{#1}
  \IfFileExists{#1}{}{\typeout{No file #1.}}
}
\newcommand{\mvp}{matrix-vector product }
\newcommand{\iso}{0.4}
\newcommand{\redcircle}[2]{\draw [fill=red] (#1,#2) circle (0.1);}
\newcommand{\redsquare}[2]{
    \draw [ultra thick] (#1,#2) rectangle (1+#1,1+#2);
    \redcircle{#1}{#2};
    \redcircle{1+#1}{#2};
    \redcircle{#1}{1+#2};
    \redcircle{1+#1}{1+#2};
}
\newcommand{\cube}[2]{
    \draw [ultra thick] (#1,#2) rectangle (1+#1,1+#2);
    \draw [ultra thick, loosely dotted] (\iso+#1,\iso+#2) rectangle (1+\iso+#1,1+\iso+#2);
    \draw [ultra thick] (1+#1,1+#2) -- (1+\iso+#1,1+\iso+#2);
    \draw [ultra thick, loosely dotted] (1+#1,#2) -- (1+\iso+#1,\iso+#2);
    \draw [ultra thick, loosely dotted] (#1,1+#2) -- (\iso+#1,1+\iso+#2);
    \draw [ultra thick, loosely dotted] (#1,#2) -- (\iso+#1,\iso+#2);
    \redcircle{#1}{#2};
    \redcircle{#1+1}{#2};
    \redcircle{#1}{#2+1};
    \redcircle{#1+1}{#2+1};
    \redcircle{#1+\iso}{#2+\iso};
    \redcircle{#1+1+\iso}{#2+\iso};
    \redcircle{#1+\iso}{#2+1+\iso};
    \redcircle{#1+1+\iso}{#2+1+\iso};
}
\DeclareRobustCommand{\cubex}[6]
{
	\draw[#5] (#1,#2,#3) -- (#1,#2+#4,#3) -- (#1+#4,#2+#4,#3) -- (#1+#4,#2,#3) -- cycle;
	\draw[#5] (#1,#2,#3+#4) -- (#1,#2+#4,#3+#4) -- (#1+#4,#2+#4,#3+#4) -- (#1+#4,#2,#3+#4) -- cycle;
	\draw[#5] (#1,#2,#3) -- (#1,#2,#3+#4);
	\draw[#5] (#1,#2+#4,#3) -- (#1,#2+#4,#3+#4);
	\draw[#5] (#1+#4,#2,#3) -- (#1+#4,#2,#3+#4);
	\draw[#5] (#1+#4,#2+#4,#3) -- (#1+#4,#2+#4,#3+#4);

    \node at (#1 + 0.5*#4,#2 + 0.5*#4,#3 + 0.5*#4) {#6};
	
}
\begin{document}

\maketitle

\begin{abstract}
We study the rank of sub-matrices arising out of kernel functions, $F(\xb,\yb): \Rb^d \times \Rb^d \mapsto \Rb$, where $\xb,\yb \in \Rb^d$ and $F\bkt{\xb,\yb}$ is smooth everywhere except along the line $\yb=\xb$. Such kernel functions are frequently encountered in a wide range of applications such as $N$-body problems, Green's functions, integral equations, geostatistics, kriging, Gaussian processes, radial basis function interpolation, etc. To our knowledge, this is the first work to formally study the rank growth of a wide range of kernel functions in all dimensions.
In this article, we prove new theorems bounding the rank of different sub-matrices arising from these kernel functions. Bounds like these are often useful for analyzing the complexity of various hierarchical matrix algorithms. We also plot the numerical rank growth of different sub-matrices arising out of various kernel functions in $1$D, $2$D, $3$D and $4$D, which, not surprisingly, agrees with the proposed theorems. Another significant contribution of this article is that, using the obtained rank bounds, we also propose a way to extend the notion of \textbf{\emph{weak-admissibility}} for hierarchical matrices in higher dimensions. Based on this proposed \textbf{\emph{weak-admissibility}} condition, we develop a black-box (kernel-independent) fast algorithm for $N$-body problems, hierarchically off-diagonal low-rank matrix in $d$ dimensions (HODLR$d$D), which can perform matrix-vector products with $\mathcal{O}\bkt{pN \log \bkt{N}}$ complexity in any dimension $d$, where $p$ doesn't grow with any power of $N$. More precisely, our theorems guarantee that $p \in \mathcal{O} \bkt{\log \bkt{N} \log^d \bkt{\log \bkt{N}}}$, which implies our HODLR$d$D algorithm scales almost linearly. The $\texttt{C++}$ implementation with \texttt{OpenMP} parallelization of the HODLR$d$D is available at \texttt{\url{https://github.com/SAFRAN-LAB/HODLRdD}}. We also discuss the scalability of the HODLR$d$D algorithm and showcase the applicability by solving an integral equation in $4$ dimensions and accelerating the training phase of the support vector machines (SVM) for the data sets with four and five features.

\end{abstract}
\begin{keywords}
Numerical rank, Near-field interactions, Hierarchical matrices, Weak admissibility, Singular kernel
\end{keywords}

\begin{AMS}
  65F55, 65D12, 65R20, 65D05, 65R10
\end{AMS}

\section{Introduction}
Matrices arising out of kernel functions are encountered frequently in many applications such as integral equations~\cite{ho2013hierarchical}, electromagnetic scattering~\cite{carpentieri2004sparse}, Gaussian process regression ~\cite{gp_book}, machine learning~\cite{gray2000}, radial basis function interpolation~\cite{carr1997surface,radial}, kernel density estimation~\cite{ker_den}, etc. These matrices are typically large and dense, since the kernel functions are not compactly supported. Due to this, storing these matrices and performing matrix operations such as matrix-vector products, solving linear systems, matrix factorizations, etc., are challenging. However, such matrices, especially ones arising out of an underlying application, possess some structure, which is leveraged to store and perform matrix operations. One such structure, which is frequently exploited in the context of matrices arising out of kernel functions, is its rank-structuredness. We do not seek to review the entire literature on the rank-structuredness of matrices arising out of kernel functions. We direct the interested readers to selected important developments on this front~\cite{fmm_ref,barnes1986hierarchical, greengard1987fast,ambikasaran2019hodlrlib,fong2009black,ambikasaran2015fast,foreman2017fast,ambikasaran2013mathcal,ifmm,ambikasaran2013large,cai2018smash,li2014kalman,ambikasaran2014fast,kandappan2022hodlr2d,ambikasaran2015generalized,ying,chandrasekaran2007fast,hackbusch2004hierarchical,randomized_aca}.

The hierarchical low-rank structure is one of the most frequently encountered rank-structuredness for matrices arising out of $N$-body problems. The first works along these lines were the Barnes-Hut algorithm (Treecode)~\cite{barnes1986hierarchical} and the Fast Multipole Method~\cite{greengard1987fast} (from now on abbreviated as FMM), which reduced the computational complexity of performing $N$-body simulations from $\mathcal{O}\bkt{N^2}$ to $\mathcal{O}\bkt{N \log N}$ and $\mathcal{O}\bkt{N}$ respectively, for a given accuracy. One of the main highlights of the FMM is that the \emph{\textbf{far-field}} interactions can be efficiently approximated by a degenerate kernel. The above two algorithms can be easily interpreted as matrix-vector products, and the fact that the \emph{\textbf{far-field}} interactions can be approximated by degenerate kernel approximation is equivalent to stating that sub-matrices corresponding to the \emph{\textbf{far-field}} interactions can be well-represented using low-rank matrices. This interpretation has led to a new class of fast matrix-vector product algorithms leveraging the low-rank structure of certain sub-matrices. Matrices that possess such a hierarchical low-rank structure are termed as hierarchical matrices, and various hierarchical low-rank algorithms have been developed for accelerating matrix-vector products~\cite{greengard1987fast,fong2009black,kandappan2022hodlr2d}. These hierarchical structures have also been leveraged to construct fast direct solvers~\cite{ambikasaran2019hodlrlib,chandrasekaran2007fast,ifmm,ho2012fast}.

However, most of the works on higher dimensional $(d>1)$ hierarchical low-rank matrices so far have only studied the rank of sub-matrices corresponding to the \emph{\textbf{far-field}} interactions (based on the strong admissibility)~\cite{shortFMM,greengard1987fast,fong2009black,ying2004kernel,greengard1997new,engquist2007fast}. \emph{\textbf{In this article, we study the rank of all possible interactions in $d$ dimensions, where $d \in \Zb^+$}}. More precisely, we prove two theorems on \textbf{rank growth of interaction between any two hyper-cubes in $d$-dimension, containing particles in its interior, that share a hyper-surface of at-most $d'$-dimension, where $d' \in \{0,1,2,\ldots,d-1\}$}.
To the best of our knowledge, we believe that this is the first work to formally study the rank of all \emph{\textbf{nearby}} interactions in any dimension for generic kernel functions that are not smooth along the diagonal. Also, based on our theorems, we develop a kernel-independent fast algorithm (HODLR$d$D) for matrix-vector product in $d$ dimensions.  We would like to point out that our earlier work~\cite{kandappan2022hodlr2d} inspired this work, where we obtained rank bounds for \emph{\textbf{nearby}} interactions in two dimensions for the kernel function, $\log\bkt{\magn{\xb-\yb}_2}$. Further, it is also worth noting that some of the hierarchical matrices such as HODLR~\cite{ambikasaran2013fast}, HSS~\cite{xia2010fast,chandrasekaran2007fast}, HBS~\cite{gillman2012direct}, $\mathcal{H}$-matrices~\cite{hackbusch2004hierarchical} with weak admissibility rely on representing certain or all \emph{\textbf{nearby}} interactions as low-rank matrices.
\section{Existing works and novelty of our work}
\label{Existing works and novelty of our work}
The rank of the kernel matrices has been studied many times in the past in the context of many applications. In this section, we discuss some of the existing work that has studied the rank of kernel matrices and those related to our current article. We highlight how our work distinguishes itself from the existing works.

Hackbusch et al.~\cite{borm2003hierarchical,borm2003introduction,grasedyck2003construction} were the pioneers in studying the rank structure of matrices arising out of kernel functions. Even though the tree code~\cite{barnes1986hierarchical}, FMM~\cite{cheng1999fast,greengard1987fast,greengard1988rapid,greengard1997new,carrier1988fast} were based on similar ideas, it was Hackbusch et al.~\cite{borm2003hierarchical,borm2003introduction,grasedyck2003construction} who interpreted them as low-rank representation of appropriate sub-matrices. Hackbusch et al.~\cite{borm2003hierarchical,borm2003introduction,grasedyck2003construction} in their very first works discussed the low-rank representation of kernel sub-matrices arising out of interaction between clusters of particles satisfying the standard (or strong) admissibility criterion, i.e., where the separation distance between two clusters exceeds the diameter of either cluster. The main highlight of FMM and hierarchical matrices is that when the clusters satisfy the standard (or strong) admissibility criterion, the corresponding matrices arising out of kernel functions are low-rank.

In their subsequent work~\cite{hackbusch2004hierarchical}, Hackbusch et al. studied the rank of kernel matrices arising out of $1$D distribution of particles sharing a boundary, i.e., a vertex in $1$D. The article shows that the rank of neighbors in $1$D scale as $\mathcal{O}\bkt{\log\bkt{N}}$, where $N$ is the number of particles in each cluster. The article terms such clusters, i.e., clusters that are neighbors in $1$D, as clusters satisfying weak admissibility, since the rank of interaction between these clusters do not scale with any power of $N$. This work \emph{doesn't discuss about the rank of interaction of neighboring clusters in higher dimension} (though they state that higher dimensions will be considered later and refer to an article in the bibliography. However, to the best of our knowledge and searches the article was never published or is available to the public).

It is to be noted that the notion of \textit{weak admissibility} introduced in~\cite{hackbusch2004hierarchical} is applicable only in the context of one dimension. Our work in this article, \emph{could} be interpreted as the extension of \textit{weak admissibility} to \textbf{higher dimensions}, since we prove that among all nearby interactions that exist for a cluster in $d$-dimensions, the rank of sub-matrices (arising out of a wide range of kernel functions) corresponding to the interaction between neighbors that share a vertex do not scale with any power of $N$. This result, which we present in this article with detailed, rigorous proof, is new to the best of our knowledge. The major highlight of our theorem is that the theorem is applicable in all dimensions \emph{for a wide range of kernel functions} encountered in many applications.

A recent work by Xia~\cite{xia2021multi} attempts to extend the notion of weak admissibility in $2$D and design fast algorithms for matrix operations leveraging this low-rank structure. It~\cite{xia2021multi} \emph{\bf{assumes}} that the underlying kernel admits a multipole like expansion to construct low-rank representation and to obtain bounds on ranks of vertex sharing and edge-sharing interaction in $2$D. Further, the article~\cite{xia2021multi} \emph{\bf{assumes}} that for a given kernel, the rank of interaction between clusters is independent of the cluster size. This may not be true for some kernels. For instance, the error term in multipole expansion of $1/r$ explicitly involves the cluster size and thereby the rank of the corresponding interaction is dependent on the underlying cluster size \cite{greengard1997new} (though the dependence on cluster size is weak, i.e., scales in the logarithm of the cluster size). We would like to emphasize that our work is significantly different from~\cite{xia2021multi} by highlighting the following important differences:
\begin{itemize}
    \item Our work is general for a wide range of kernels in \emph{\bf{all}} dimension.
    \item We \emph{\bf{prove that}} one can obtain a separable expansion for kernels even for neighboring clusters.
    \item We \emph{\bf{construct explicit error bounds}} for the kernel in terms of the separable expansion obtained for neighboring clusters.
    \item In fact, one can see the difference in both our results for $2$D vertex sharing. 
While~\cite{xia2021multi}, \emph{\bf{with its set of restrictive assumptions}}, obtain a rank of $\mathcal{O}\bkt{\log\bkt{N/\epsilon}}$, \emph{\bf{our result, which is applicable to a wide range of kernels}}, provides the bound as $\mathcal{O}\bkt{\log\bkt{N}\log^2\bkt{\dfrac{\log{N}}\epsilon}}$. A slightly stronger bound for the logarithmic kernel scaling as $\mathcal{O}\bkt{\log\bkt{N}\log\bkt{\dfrac{\log{N}}\epsilon}}$ can be found here~\cite{kandappan2022hodlr2d}. Our result in $2$D may not be practically different from the result in~\cite{xia2021multi} (since the term $\log\bkt{\log\bkt{N}}$ is going to be bounded by a constant for problems of interest), but our bounds on the rank are accurate.
\end{itemize}

Another work that discusses the rank of kernel functions is the work by Wang et al.~\cite{wang2018numerical}, where they discuss the rank growth of radial basis functions (RBF) that are globally smooth, i.e., $K(x,y) = f\bkt{\magn{x-y}_2^2}$, where the function $f(x,y)$ is globally smooth. Since these kernels are globally smooth, given an $\epsilon > 0$, one can obtain a separable expansion for $K(x,y)$ as $\dsum_{i=1}^r g_i(x)h_i(y)$ on a compact set $D \subseteq \Rb^d \times \Rb^d$ such that
\begin{equation}
    \abs{K(x,y)-\dsum_{i=1}^r g_i(x)h_i(y)} < \epsilon
\end{equation}
for all $\bkt{x,y} \in D$. This immediately implies that the numerical rank of the associated kernel matrix doesn't scale with $N$. Our current work is drastically different from the work by Wang et al.~\cite{wang2018numerical} on multiple fronts.
\begin{itemize}
    \item In our current work, we are considering kernels that not globally smooth. More specifically, we are considering kernels that are \textbf{\emph{not smooth along $\pmb{y} = \pmb{x}$}} and are smooth everywhere else.
    \item The other significant difference is that the kernels considered in this article are not necessarily a function of the Euclidean distance between $\pmb{y}$, $\pmb{x}$ (though in the examples provided, we consider only such kernels). This difference is significant since in the work by Wang et al.~\cite{wang2018numerical}, the kernel is a radial basis function, which implies that the domain of the radial basis function is essentially one dimension. Our result is applicable to a broader range of functions.
\end{itemize}

The works by Ho \& Greengard~\cite{ho2012fast}, Ho \& Lexing~\cite{ho2013hierarchical} obtain bounds on rank heuristically while discussing the complexity of their algorithms. We believe our work differs substantially from Ho et al. \cite{ho2012fast,ho2013hierarchical} as indicated below.
\begin{itemize}
    \item \cite{ho2012fast,ho2013hierarchical} heuristically discuss the rank of the vertex-sharing case in $1$D, the edge-sharing case in $2$D, and the face-sharing case in $3$D. More precisely, they heuristically discuss the rank of the interaction of two $d$-dimensional hyper-cubes that share a $(d-1)$-dimensional hyper-surface.  On the other hand, in this article, we discuss the rank growth of \textbf{\emph{all interactions}}, i.e., for two hyper-cubes in $d$ dimensions, we obtain bounds on ranks when they share $d'$-dimensional hyper-surface, where $d' \in \{0,1,2,\ldots,d-1\}$. Also, our results on the rank growth for kernel matrices are backed by theorems applicable for a wide-range of kernels in $d$-dimensions.
    \item The second difference is that the initialization cost of our algorithm (HODLR$d$D) is almost linear, even in $d$ dimensions. But the initialization cost of the algorithms in~\cite{ho2012fast,ho2013hierarchical} is high.
\end{itemize}

Another work by Corona et al.~\cite{corona2015n}, which is along the lines of \cite{ho2012fast,ho2013hierarchical,xia2021multi}, presents a fast direct solver for integral equations in two dimensions. For the same reasons as expressed above, the work in this article is different from the work of Corona et al.~\cite{corona2015n}.




\vspace{1cm}
\boxed{\textbf{\textit{Main highlights of this article :}}}
\begin{enumerate}
    \item We prove two new theorems for the rank of kernel matrices arising out of a \textbf{\emph{wide range of kernel functions in all dimensions}}, which give us rigorous bounds on the rank of kernel matrices for different types of interactions in all dimensions.
    \item One of our theorems guarantees that the rank of kernel matrices arising out of \textbf{\emph{vertex-sharing interactions do not scale as any power of $N$}}. This is leveraged to construct an almost linear fast matrix-vector product algorithm, the hierarchically off-diagonal low-rank matrix in $d$ dimensions (from now on abbreviated as HODLR$d$D). We would also like to state that looking at only the vertex-sharing and far-field interaction is the right way to extend the notion of weak admissibility (in the context of hierarchical matrices) to higher dimensions.
    \item We have implemented HODLR$d$D in a user-friendly and dimension-independent fashion, i.e., the user can apply it in any dimension $d$. Also, this algorithm can be used in a black-box (kernel-independent) fashion. As part of this article, we would also like to release the code made available at \texttt{\url{https://github.com/SAFRAN-LAB/HODLRdD}}.
    \item We apply HODLR$d$D to a wide range of problems from solving integral equations in higher dimensions to accelerate the training phase of classical kernel Support Vector Machines (SVM). Also, we compare the performance of HODLR$d$D with $\mathcal{H}$ matrix with strong admissibility (Treecode) and HODLR representation (where all non self-interactions are compressed). 
\end{enumerate}
\textbf{\emph{Outline of the article.}} The rest of the article is organized as follows. In~\Cref{Preliminaries}, we discuss some notations and lemmas, which we will use to prove our theorems. In~\Cref{ddim}, we prove our main theorems with numerical results in $4$D, $3$D, $2$D and $1$D. In~\Cref{weak_admis}, we describe the notion of \emph{weak admissibility} for higher dimension based on our theorems and discuss HODLR$d$D algorithm based on this weak admissibility condition. Further, in \Cref{num_results}, we compare the scalability and performance of the HODLR$d$D algorithm with other hierarchical algorithms and present some applications of the HODLR$d$D algorithm.  

\section{Preliminaries} \label{Preliminaries}
In this section, we state the notations, definitions and lemmas, we will use in this article. We also state the main theorems, which we will prove in this article.

\begin{itemize}
\item \textbf{Bernstein ellipse:} The standard Bernstein ellipse is an ellipse whose foci are at $(-1,0)$, $(1,0)$ and its parametric equation is given by
\begin{equation}
    \mathcal{B}_{\rho} = \text{interior} \bkt{\left\{\dfrac{\rho e^{it}+\rho^{-1} e^{-it}}2 \in \mathbb{C}: t \in [0,2\pi) \right\}}
\end{equation}
for a fixed $\rho > 1$.
\item
Let $\tau_{[\underaccent{\bar}y,\bar{y}]}: \Cb \mapsto \Cb$ such that $\tau_{[\underaccent{\bar}y,\bar{y}]}\bkt{z} = \bkt{\dfrac{\underaccent{\bar}y+\bar{y}}2} + \bkt{\dfrac{\bar{y}-\underaccent{\bar}y}2}z$, where $\underaccent{\bar}y,\bar{y} \in \Cb$. Essentially this map scales and shifts the interval $[-1,1]$ to the interval $[\underaccent{\bar}y,\bar{y}]$.
\item \textbf{Generalized Bernstein ellipse:} Let
\begin{equation}
V = [\underaccent{\bar}y_1,\bar{y}_1] \times [\underaccent{\bar}y_2,\bar{y}_2] \times \cdots \times [\underaccent{\bar}y_d,\bar{y}_d]
\end{equation}
be a hyper-cube with side length $r$, i.e., $\abs{\bar{y}_k-\underaccent{\bar}y_k} = r$ for all $k \in \{1,2,\ldots,d\}$. The generalized Bernstein ellipse on a hyper-cube $V$ is denoted as $\mathcal{B}\bkt{V,\rho'}$ with $\rho' \in \bkt{1,\infty}^d$ and is given by
\begin{equation}
    \mathcal{B}\bkt{V,\rho'} = \tau_{[\underaccent{\bar}y_1,\bar{y}_1]}\bkt{\mathcal{B}_{\rho_1}} \times \tau_{[\underaccent{\bar}y_2,\bar{y}_2]}\bkt{\mathcal{B}_{\rho_2}} \times \cdots \times \tau_{[\underaccent{\bar}y_d,\bar{y}_d]}\bkt{\mathcal{B}_{\rho_d}}
\end{equation}
$\tau_{[\underaccent{\bar}y_k,\bar{y}_k]}\bkt{\mathcal{B}_{\rho_k}}$ scales the standard Bernstein ellipse whose foci are at $-1$ and $1$ to an ellipse whose foci are $\underaccent{\bar}y_k$ and $\bar{y}_k$, where $\rho_k > 1$ for $k \in \{1,2,\ldots,d\}$.
   \item \(T_k(x)\) is Chebyshev polynomial of the first kind, i.e.,
        $T_k(x) = \cos\bkt{k \cos^{-1}(x)}$ for $k \geq 0$.
    \item Chebyshev nodes are the points $ y^k = \cos\bkt{\dfrac{\pi k}{p}}$, $k \in \{0,1,2,\dots,p\}$
    \item \textbf{Polynomial interpolation error in different dimensions.} We will be using interpolation to prove the proposed theorems. The main tool we will rely on is the following \cref{th4}.
    \begin{lemma} \label{th3}
        Let a function \(f\) analytic in \([−1, 1]\) be analytically continuable to the open Bernstein ellipse $\mathcal{B}_{\rho}$ with $\rho > 1$, where it satisfies \(\abs{f (y)} \leq M \) for some $M>0$, then for each $n \geq 0$ its Chebyshev interpolant $\Tilde{f}\bkt{y} = \dsum_{j=0}^{p} c_j T_j(y)$ satisfy 
        \begin{equation}
            \max_{y \in [-1,1]} \abs{{f\bkt{y}-\Tilde{f}\bkt{y} }} = \magn{f-\Tilde{f}}_{\infty} \leq \frac{4M \rho^{-p}}{\rho -1}
        \end{equation}
        where $c_j = \dfrac{2^{\mathds{1}_{0<j<p}}}{p} \dsum_{k=0}^{p}{''} f\bkt{y^k} \cos \bkt{\dfrac{j \pi k}{p}}, j \leq p$ and the notation $\dsum {''}$ indicates that the first and last summands are halved.
    \end{lemma}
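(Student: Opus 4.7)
The plan is to prove this by the standard route that goes through the decay of Chebyshev coefficients plus an aliasing identity for interpolation at the Chebyshev--Lobatto nodes $y^k=\cos(\pi k/p)$. The result itself is classical (see e.g. Trefethen), so I will emphasize the structure rather than redo every algebraic manipulation.

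First I would expand $f$ in its full Chebyshev series $f(y)=\sum_{k=0}^{\infty} a_k T_k(y)$ on $[-1,1]$, where $a_k = \tfrac{2^{\mathds{1}_{k>0}}}{\pi}\int_{-1}^{1}\tfrac{f(y)T_k(y)}{\sqrt{1-y^2}}\,dy$. Substituting $y=(z+z^{-1})/2$ with $z=e^{i\theta}$ identifies these integrals with Fourier coefficients of a function analytic in an annulus containing $|z|=1$; pushing the contour out to $|z|=\rho$ (which lies inside the Bernstein ellipse $\mathcal{B}_\rho$, where $|f|\le M$) yields the geometric decay
\begin{equation}
|a_0|\le M,\qquad |a_k|\le 2M\rho^{-k}\quad (k\ge 1).
\end{equation}
This is the first analytic ingredient.

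Next I would invoke the aliasing identity at the Chebyshev--Lobatto nodes: if $j\equiv \pm k\pmod{2p}$ then $T_j(y^i)=T_k(y^i)$ for every node $y^i$, so the discrete coefficients $c_j$ defined in the statement can be read off as
\begin{equation}
c_k = a_k + \sum_{m=1}^{\infty}\bigl(a_{2mp-k}+a_{2mp+k}\bigr),\qquad 0<k<p,
\end{equation}
with analogous formulas for $c_0$ and $c_p$. Subtracting the interpolant $\tilde f=\sum_{j=0}^{p} c_j T_j$ from the series for $f$ and regrouping gives a representation of $f-\tilde f$ as an infinite sum over only the tail indices $k\ge p+1$, each term of the form $a_k(T_k(y) - T_{k'}(y))$ with $|T_k|,|T_{k'}|\le 1$ on $[-1,1]$.

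Finally I would use the triangle inequality and the coefficient bound to collapse the tail: each tail coefficient contributes at most $2|a_k|\le 4M\rho^{-k}$, and summing the geometric tail yields
\begin{equation}
\|f-\tilde f\|_\infty \le 2\sum_{k=p+1}^{\infty}|a_k| \le 4M\sum_{k=p+1}^{\infty}\rho^{-k} = \frac{4M\rho^{-p}}{\rho-1},
\end{equation}
which is the asserted bound. The main obstacle is the bookkeeping in the aliasing step—matching each $a_k$ for $k>p$ with the correct $c_{k\bmod\ldots}$ and verifying that the endpoint weights (the $''$ in the quadrature-like formula for $c_j$) are exactly what makes the cancellation work; once that is handled carefully, the coefficient decay and the geometric series finish the proof essentially mechanically.
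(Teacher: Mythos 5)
Your proposal is correct and is essentially the standard proof from Trefethen's text, which is exactly the source the paper cites for this lemma (the paper gives no independent proof of its own). The two ingredients you use --- the coefficient decay $|a_k|\le 2M\rho^{-k}$ from analyticity in $\mathcal{B}_\rho$ and the aliasing identity at the Chebyshev--Lobatto nodes, followed by summing the geometric tail with the factor of $2$ from each aliased pair --- are precisely how the cited reference obtains the $\dfrac{4M\rho^{-p}}{\rho-1}$ bound.
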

    The proof of this lemma is given in~\cite{Trefethen}. Note that $\Pc_p(f)\bkt{y}$ can also be written as
    \begin{equation*}
        \Tilde{f}\bkt{y} = \dsum_{j=0}^p f\bkt{y^j} L_j\bkt{y}
    \end{equation*}
    where $L_j\bkt{y}$ is the Lagrange polynomial given by
       $ L_j\bkt{y} = \dprod_{k=0, k\neq j}^p \bkt{\dfrac{y-y^k}{y^j-y^k}}$.
  Now we discuss a multi-dimension extension of the above lemma given by~\cite{Gab19,glau}. Let $f: V \mapsto \mathbb{R}$ with $V = [-1,1]^d \subset \mathbb{R}^d$. Let $\Bar{p} := (p_1,p_2,\dots, p_d)$ with $p_i \in \mathbb{N}_0$ for $i=1,2,\dots, d$. The interpolation with $\dprod_{i=1}^{d}(p_i +1)$ summands is given by $$\Tilde{f}\bkt{\pmb{y}} := \sum_{\pmb{j} \in J}c_{\pmb{j}} T_{\pmb{j}}(\pmb{y})$$ where $J = \{ (j_1,j_2,\dots,j_d) \in \mathbb{N}_{0}^{d} : j_i \leq p_i\}$, $T_{\pmb{j}}(y_1,y_2,\hdots,y_d) = \dprod_{i=1}^d T_{j_i}(y_i)$ and 
  \begin{equation}
      c_{\pmb{j}} = \bkt{\dprod_{i=1}^d \dfrac{2^{\mathds{1}_{0<j_i<p_i}}}{p_i}} \dsum_{k_1=0}^{p_1}{''} \hdots \dsum_{k_d=0}^{p_d}{''} f(y^{\pmb{k}}) \dprod_{i=1}^{d} \cos \bkt{\dfrac{j_i \pi k_i}{p_i}}
  \end{equation}
  $\pmb{k} = (k_1,k_2,\hdots,k_d) \in J$ and $y^{\pmb{k}} = \bkt{y^{k_1}, y^{k_2}, \hdots, y^{k_d}}$ with $y^{k_i} =  \cos \bkt{\dfrac{\pi k_i}{p_i}} $ , $k_i=0,1,\hdots,p_i$, $i=1,2,\hdots,d$ and $\pmb{j} \in J$. Note that $\abs{J} = \dprod_{i=1}^{d}(p_i +1)$
  
    \begin{lemma} \label{th4}
        Let $f : V \mapsto \Rb$ has an analytic extension to some generalized Bernstein ellipse $\mathcal{B} \bkt{V,\rho'}$, for some parameter vector $\rho' \in \bkt{1,\infty}^d$ and \\ $\magn{f}_{\infty} = \underset{y \in \mathcal{B} \bkt{V,\rho'}}{\max} \abs{f(y)} \leq M$. Then
        \begin{equation}
            \max_{\yb \in V} \abs{f \bkt{\yb}-\Tilde{f} \bkt{\yb}} = \magn{f- \Tilde{f}}_{\infty} \leq  \dsum_{m=1}^{d} 4M \bkt{\dfrac{\rho_{m}^{-p_m}}{\rho_m -1}} + \dsum_{l=2}^{d} 4M \dfrac{\rho_{l}^{-p_l}}{\rho_l -1} \bkt{ 2^{l-1} \dfrac{(l-1) + 2^{l-1}-1}{\prod_{j=1}^{l-1}\bkt{1-\dfrac{1}{\rho_j}}}}
        \end{equation}

    \end{lemma}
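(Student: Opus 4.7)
The plan is to prove the multivariate bound by telescoping the tensor-product Chebyshev interpolant into one-dimensional interpolation errors and invoking the scalar \cref{th3} coordinate by coordinate. Introduce $\mathcal{P}_m$ as the one-dimensional Chebyshev interpolation of degree $p_m$ acting on the $m$-th variable alone, with all other variables treated as frozen parameters. Because Chebyshev interpolation at tensor-product Chebyshev nodes is itself a tensor-product operator, one has $\Tilde{f} = \mathcal{P}_1 \mathcal{P}_2 \cdots \mathcal{P}_d f$. Setting $Q_l := \mathcal{P}_1 \cdots \mathcal{P}_l$ and $Q_0 := I$, the algebraic identity
\[
I - Q_d \;=\; \dsum_{l=1}^d Q_{l-1}\bkt{I - \mathcal{P}_l}
\]
decomposes the full error into $d$ summands, the $l$-th being a one-dimensional interpolation error in the $y_l$ direction applied to a function already interpolated in the first $l-1$ coordinates.

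For $l=1$ the function $f$ is, by hypothesis, analytic in $y_1$ on $\tau_{[\underaccent{\bar}y_1,\bar{y}_1]}\bkt{\mathcal{B}_{\rho_1}}$ uniformly in the remaining variables with modulus at most $M$, so \cref{th3} yields the contribution $4M\rho_1^{-p_1}/(\rho_1-1)$ directly; repeating this argument after stripping the outer interpolations accounts for the first sum $\dsum_{m=1}^d 4M\rho_m^{-p_m}/(\rho_m-1)$. For $l \geq 2$ I would proceed in two steps: first, extend $Q_{l-1}f$ analytically in $y_l$ to $\tau_{[\underaccent{\bar}y_l,\bar{y}_l]}\bkt{\mathcal{B}_{\rho_l}}$, which is legal because $Q_{l-1}f$ is a polynomial in the first $l-1$ variables whose coefficients inherit analyticity in $y_l$ from $f$; second, bound $\magn{Q_{l-1}f}_\infty$ on this extended domain by expanding $f$ in a multivariate Chebyshev series, using the Bernstein bound $\abs{a_{\pmb{j}}} \leq 2^{\abs{\{i : j_i > 0\}}} M \dprod_i \rho_i^{-j_i}$, and observing that applying $\mathcal{P}_j$ term by term collapses the series in the $y_j$ direction up to an amplification controlled by the geometric series $\dsum_{k \geq 0} \rho_j^{-k} = (1-1/\rho_j)^{-1}$. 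Iterating over $j = 1, \ldots, l-1$ produces the denominator $\dprod_{j=1}^{l-1}\bkt{1 - 1/\rho_j}$, and then \cref{th3} applied in the $y_l$ direction supplies the factor $4M\rho_l^{-p_l}/(\rho_l-1)$.

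The main obstacle is pinning down the explicit combinatorial prefactor $2^{l-1}\bkt{(l-1) + 2^{l-1}-1}$. It arises from careful bookkeeping of (i) the $2^{l-1}$ factor produced by the interior-versus-endpoint weighting built into the halved-summation convention $\dsum{''}$ across each of the $l-1$ already-interpolated coordinates, and (ii) the additive $(l-1) + 2^{l-1} - 1$ counting cross-terms that appear when $Q_{l-1}f$ is split into its diagonal Chebyshev-coefficient contribution plus the off-diagonal residual generated by each intermediate $\mathcal{P}_j$. Once this constant is secured, the triangle inequality over the $d$ telescoped summands delivers the stated bound verbatim; beyond \cref{th3} and routine manipulation of Chebyshev coefficients no new analytic ingredient is required. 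In parallel I would consult the constants already derived in~\cite{Gab19,glau}, where essentially this bound is established, and verify that their formulas specialize to the statement above.
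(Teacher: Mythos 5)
The paper does not prove this lemma itself: it states that the proof is given by Glau et al.~\cite{glau} by induction on the dimension, and then only specializes the bound to equal degrees and equal $\rho_i$. Your telescoping decomposition $I-Q_d=\sum_{l=1}^{d}Q_{l-1}\bkt{I-\mathcal{P}_l}$ is exactly the unrolled form of that induction, and the ingredients you identify are the right ones: the scalar bound of \cref{th3} applied direction by direction, analytic continuation of the partially interpolated function in the $y_l$ variable, and control of $\magn{Q_{l-1}f}_\infty$ on the $l$-th Bernstein ellipse via the decay of multivariate Chebyshev coefficients, with the geometric series $\sum_{k\geq 0}\rho_j^{-k}=\bkt{1-1/\rho_j}^{-1}$ producing the denominator $\prod_{j=1}^{l-1}\bkt{1-1/\rho_j}$. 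Structurally this is consistent with the cited proof and with the shape of the stated bound (a clean $4M\rho_l^{-p_l}/(\rho_l-1)$ term for each direction plus an amplified correction for $l\geq 2$).

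The gap is that you never actually derive the prefactor $2^{l-1}\bkt{(l-1)+2^{l-1}-1}$, which is the entire non-trivial content of the lemma beyond the obvious $\mathcal{O}\bkt{\sum_l \rho_l^{-p_l}}$ statement. The paragraph you offer in its place is a heuristic, and parts of it are not right as stated: the factor $2^{l-1}$ does not come from the halved-summation convention in $\sum{''}$ (that convention is just the discrete cosine transform normalization and does not amplify the bound); it comes from the factor $2^{\abs{\{i:j_i>0\}}}$ in the multivariate Chebyshev coefficient bound you quote, accumulated over the $l-1$ already-interpolated coordinates, combined with the aliasing of high-order modes under each $\mathcal{P}_j$. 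The additive count $(l-1)+2^{l-1}-1$ only emerges from carrying out the induction with explicit bookkeeping of which index sets $\{i:j_i>0\}$ contribute at each stage, and you do not carry it out; instead you defer to~\cite{Gab19,glau} to "verify that their formulas specialize." That reduces your argument to the same citation the paper makes, so as a standalone proof it is incomplete, even though the strategy is the correct one.
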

    The proof is given by Glau et al.~\cite{glau} using induction. In our case, we will be working with $p_1 = p_2 = \cdots = p_d = p $. Further, for the sake of simplicity, we set $\rho = \min \{ \rho_i : i=1,2,\dots, d\}$.
    \begin{equation} \label{eq0}
        \magn{f- \Tilde{f}}_{\infty} \leq 4M \frac{\rho^{-p}}{\rho -1} \overbrace{\bkt{ \sum_{m=1}^{d}1 +  \sum_{l=2}^{d}  2^{l-1} \dfrac{(l-1) + 2^{l-1}-1}{\bkt{1-\dfrac{1}{\rho}}^{l-1}}}}^{V_d}
    \end{equation}
    We have $V_d = \bkt{d +  \dsum_{l=2}^{d}  2^{l-1} \dfrac{(l-1) + 2^{l-1}-1}{\bkt{1-\dfrac{1}{\rho}}^{l-1}}}$, it does not depend upon $p.$ \\
    Therefore, from~\cref{eq0} we have
    \begin{equation} \label{eq1}
        \magn{f- \Tilde{f}}_{\infty} \leq 4MV_d \frac{\rho^{-p}}{\rho -1}
    \end{equation}
    We use~\cref{eq1} to prove our higher dimensional $(d>1)$ results. Note that $\Tilde{f}\bkt{\pmb{y}}$ can also be written as
    \begin{equation}
        \Tilde{f}\bkt{\pmb{y}} = \dsum_{\pmb{j} \in J} f\bkt{y^{\pmb{j}}} R_{\pmb{j}}\bkt{\pmb{y}}
    \end{equation}
    where $R_{\pmb{j}}\bkt{y}$ is given by
       $ R_{\pmb{j}}\bkt{\pmb{y}} = \dprod_{i=1}^d L_{j_{i}}\bkt{y_i}$
\item
\textbf{Source hyper-cube.} Let $Y \subset \Rb^d$ be a compact hyper-cube in $d$-dimensions with $N=n^d$ charges (or sources or particles) uniformly distributed in its interior, where $n \in \mathbb{N}$. We take uniformly distributed sources to prove our theorems. But our HODLR$d$D algorithm applies to the non-uniform distribution of particles (\cref{k_svm}).
\item
\textbf{Locations of sources.} Let $R_Y$ be the set of locations of these sources, i.e., $R_Y = \{\pmb{y_1},\pmb{y_2},\ldots,\pmb{y_N}\}$ and $R_Y \subset \text{interior}\bkt{Y}$.
\item
\textbf{Target hyper-cube.} Let $X \subset \Rb^d$ be another compact hyper-cube in $d$-dimension, which is identical to the hyper-cube $Y$ and contains $T$ distinct target points in its interior. Further, we will only consider cases where
   $ \text{interior}(X) \dcap \text{interior}(Y) = \emptyset $.
\item
\textbf{Locations of targets.} Let $R_X$ be the set of locations of these targets, i.e.,  $R_X = \{\pmb{x_1},\pmb{x_2},\ldots,\pmb{x_T}\}$ and $R_X \subset \text{interior}\bkt{X}$.
\item \textbf{Kernel function.} The function $F(\xb,\yb): \text{interior}\bkt{X} \times \text{interior}\bkt{Y} \mapsto \Rb$ will be called the \textbf{\emph{kernel function}} throughout this article.
\item $\text{diam}(Y) = \sup\{\magn{\pmb{\Tilde{\alpha}}-\pmb{\Tilde{\beta}}}_2:\pmb{\Tilde{\alpha}},\pmb{\Tilde{\beta}}\in Y\}$
\item For any two hyper-cubes $U$ and $V$ in $\Rb^d$, \text{dist}$(U,V) = \inf\{\magn{\pmb{\Tilde{u}}-\pmb{\Tilde{v}}}_2:\pmb{\Tilde{u}} \in U \text{ and }\pmb{\Tilde{v}}\in V\}$.
\item \textbf{\textit{Analytic continuation assumption of kernel function}}: Let $U,V$ be compact hyper-cubes in $\Rb^d$ such that $\text{dist}\bkt{U,V} > 0$. We say that $F:U \times V \mapsto \Rb$ is analytically continuable to a generalized Bernstein ellipse if there exists a generalized Bernstein ellipse $\mathcal{B}\bkt{V,\rho'}$ and an analytic function $F_a:U \times \mathcal{B}\bkt{V,\rho'} \mapsto \Cb$ such that the restriction of $F_a$ onto $U \times V$ is same as $F$ and $F_a$ is bounded on $U \times \mathcal{B}\bkt{V,\rho'}$. \emph{Throughout the article, we will assume that the kernel function satisfies the analytic continuation assumption (in addition to the fact that the function is not analytic along $\yb=\xb$).}
\item \textbf{Kernel matrix.} The matrix $K \in \Rb^{T \times N}$ will be called the \textbf{\emph{kernel matrix}} throughout this article. The $(i,j)^{th}$ entry of $K$ is given by
   $ K(i,j) = F\bkt{\pmb{x_i},\pmb{y_j}} $.
\item \textbf{Approximation of the kernel matrix via Chebyshev interpolation: } The function $F(\pmb{x},\pmb{y})$ is interpolated using Chebyshev nodes along $\yb$. The interpolant is given by
    \begin{equation}
        \Tilde{F}\bkt{\xb,\yb} = \dsum_{\pmb{j} \in J}c_{\pmb{j}}\bkt{\xb} T_{\pmb{j}}\bkt{\yb}
    \end{equation}
More precisely,
    \begin{equation}
        \Tilde{F} \bkt{\xb,\yb} = \sum_{\pmb{k} \in J} F \bkt{\pmb{x},y^{\pmb{k}}} R_{\pmb{k}} \bkt{\yb}
    \end{equation}
where $R_{\pmb{k}}$ is Lagrange basis. Let $\tilde{K}$ be the matrix corresponding to the function $\tilde{F}\bkt{\xb,\yb}$, i.e., $\tilde{K}\bkt{i,j} = \dsum_{\pmb{k} \in J} F \bkt{\xb_i,y^{\pmb{k}}}R_{\pmb{k}}\bkt{\yb_j} = \tilde{F}\bkt{\pmb{x}_i,\pmb{y}_j}$. The matrix $\tilde{K}$ will be the approximation to the kernel matrix $K$ with the rank of $\Tilde{K}$ is $\abs{J}$.
\end{itemize}

\begin{definition}
    \textbf{The max norm of a matrix.} The \emph{max} norm of a matrix $K \in \Rb^{T \times N}$ is defined by \\ $\magn{K}_{max} = \displaystyle\max_{1 \leq i \leq T, 1 \leq j \leq N} \bigr \{ \abs{K(i,j)} \bigr \}$. 
\end{definition}
\begin{definition} \label{num_rank1}
    \textbf{Numerical max-rank of a matrix}: For a given $\epsilon > 0$, we define the numerical rank of the matrix $K \in \Rb^{T \times N}$ as follows
    \begin{equation}
        r_{\epsilon} =  \min \bigr \{ r : \magn{K - \tilde{K}}_{max} < \epsilon \magn{K}_{max} \bigr \}
    \end{equation}
\end{definition}
where $\tilde{K} \in \Rb^{T \times N}$ is a rank $r$ matrix. In our theorems, we get upper bound on $r_{\epsilon}$.
\begin{definition} \label{num_rank2}
    \textbf{Numerical rank of a matrix}: For a given $\epsilon > 0$, the $\epsilon$-rank of $K \in \Rb^{T \times N}$, denoted by $p_{\epsilon}$, is defined as follows.
    \begin{equation}
    p_{\epsilon} = \min \{k \in \{1,2,\dots, \min\{T,N\} \} : \sigma_{k} < \epsilon \sigma_1 \}
    \end{equation}
\end{definition}
where $\sigma_1 \geq \sigma_2 \geq \dots \geq \sigma_{\min\{T,N\}} \geq 0 $ are the singular values of $K$. In our numerical illustrations, we plot $p_{\epsilon}$ with $\epsilon = 10^{-12}$.

The main theorems that we prove in \Cref{ddim} is given below.

    \begin{tcolorbox}[colback=white!5!white,colframe=black!75!black,title=Main theorems]
        Let $K$ be the kernel interaction matrix. For a given
        $\delta >0$, there exists a matrix $\Tilde{K}$ with rank $p_{\delta}$ such that $\dfrac{\magn{K - \Tilde{K}}_{max}}{\magn{K}_{max}} < \delta$, where $p_\delta\in \mathcal{O}\bkt{\mathcal{R}\bkt{N}\bkt{ \log^{d-d'}\bkt{\frac{\mathcal{R} \bkt{N}}{\delta}}}}$
        \vspace{0.5cm}
        \begin{theorem}
            If \(X\) and \(Y\) share a vertex (vertex-sharing d-hyper-cubes (domains)) then
            $\boxed{\mathcal{R}(N)= \log_{2^d}(N)}$. In this case $d'=0$. \\
            For example, if $Y := [0,r]^d$ and $X := [-r,0]^d$ then the interaction between $X$ and $Y$ is vertex-sharing interaction. By vertex-sharing, we mean the identical hyper-cubes $X$ and $Y$ of side $r$ are connected by the translation $Y = X + r \bkt{1,1,\dots,1}$, where $\bkt{1,1,\dots,1} \in \mathbb{R}^d$
        \end{theorem}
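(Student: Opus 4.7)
The plan is to construct $\tilde{K}$ by a hierarchical dyadic decomposition of $X \times Y$ combined with tensor-product Chebyshev interpolation on every well-separated sub-block. The geometric observation driving the argument is that, since $F$ is analytic off $\yb=\xb$ and the hyper-cubes $X=[-r,0]^d,\,Y=[0,r]^d$ touch only at $\pmb{0}$, subdividing $X$ and $Y$ into their $2^d$ equal children produces $4^d$ sub-pairs, of which \emph{exactly one} --- the pair of innermost children $[-r/2,0]^d \times [0,r/2]^d$ --- is again vertex-sharing, while the remaining $4^d-1$ sub-pairs are separated by a distance at least $r/2$, a fixed fraction of their diameter. The singular behaviour is therefore confined to the innermost sub-pair, which I handle recursively while compressing all other sub-pairs at each level.

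I iterate this subdivision for $L = \ceil{\log_{2^d} N}$ levels. At each level $\ell \in \{1,\dots,L\}$ the recursion spawns $4^d-1$ well-separated sub-pairs $\bkt{U_\ell^{(j)}, V_\ell^{(j)}}$ to be compressed and one innermost pair that is passed deeper. At level $L$ the innermost pair contains only $N/2^{dL} = \mathcal{O}\bkt{1}$ points on each side and is represented exactly at trivial cost. For every well-separated sub-pair I invoke~\cref{th4} with $p$ Chebyshev nodes per coordinate, yielding a rank-$p^d$ separable approximation of $F$ on $U_\ell^{(j)} \times V_\ell^{(j)}$. By~\cref{eq1}, the max error on such a block is at most $4MV_d\rho^{-p}/(\rho-1)$ for a uniform pair $\rho>1,\,M$ (justified below); summing over the $L\bkt{4^d-1}$ compressed blocks yields
\begin{equation*}
    \magn{K-\tilde{K}}_{max} \leq L\bkt{4^d-1}\cdot \frac{4MV_d\,\rho^{-p}}{\rho-1}.
\end{equation*}
Requiring the right-hand side to be below $\delta\magn{K}_{max}$ forces $p \in \mathcal{O}\bkt{\log\bkt{L/\delta}} = \mathcal{O}\bkt{\log\bkt{\log_{2^d}\bkt{N}/\delta}}$, so the total rank of $\tilde{K}$ is at most $L\bkt{4^d-1}p^d + \mathcal{O}\bkt{1} \in \mathcal{O}\bkt{\log_{2^d}\bkt{N}\,\log^d\bkt{\log_{2^d}\bkt{N}/\delta}}$, which is the stated bound with $\mathcal{R}\bkt{N}=\log_{2^d}\bkt{N}$ and $d'=0$.

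The main technical obstacle is justifying the \emph{uniform} choice of $\rho>1$ and $M$ across all levels and sub-pairs, and controlling the ratio $M/\magn{K}_{max}$. The key observation is scale invariance: after affinely rescaling each $V_\ell^{(j)}$ to $[-1,1]^d$, the well-separated configuration $\bkt{U_\ell^{(j)},V_\ell^{(j)}}$ belongs to one of only finitely many equivalence classes, enumerated by the $4^d-1$ possible relative positions produced in a single subdivision step. For each class the distance from the generalized Bernstein ellipse to the rescaled singular locus is a positive constant, giving a uniform $\rho$; the analytic-continuation hypothesis on $F$ together with a compactness argument over these finitely many types furnishes a uniform $M$, with the natural $\ell$-dependent growth of $\abs{F}$ near $\pmb{0}$ matched by the corresponding scaling of $\magn{K}_{max}$ through the requirement $R_X \subset \text{interior}\bkt{X}$, $R_Y \subset \text{interior}\bkt{Y}$, which keeps the nearest actual source-target pair a controlled positive distance from the shared vertex. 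I expect this uniform bookkeeping to be the bulk of the technical work; once it is in place the error summation and rank accounting above close the proof.
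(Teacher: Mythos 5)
Your proof is correct and reaches the stated bound, but it takes a genuinely different decomposition from the paper's. You subdivide \emph{both} $X$ and $Y$ simultaneously, producing at each level $4^d$ child pairs of which exactly one is again vertex-sharing and $4^d-1$ are separated by a full child side length in at least one coordinate; you compress those and recurse on the touching pair. The paper instead subdivides \emph{only the source cube} $Y$ with a $2^d$-tree, peeling off the $2^d-1$ children not containing the shared vertex at each of $\kappa\sim\log_{2^d}N$ levels, and interpolates $F(\xb,\yb)$ in $\yb$ alone over each peeled sub-cube against the \emph{entire} target cube $X$; this works because each peeled sub-cube is at distance at least its own side length from $X$, so the generalized Bernstein ellipse avoids the singular set $\yb=\xb$ with a level-independent $\rho$ (the paper records exactly this in a remark). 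The two routes give the same asymptotics --- $\mathcal{O}(\log_{2^d}N)$ compressed blocks, each of rank $\mathcal{O}\bigl(\log^d(\log_{2^d}(N)/\delta)\bigr)$ --- but the one-sided version produces fewer blocks per level ($2^d-1$ versus your $4^d-1$) and yields directly a global column-block low-rank factorization of the full $T\times N$ matrix, whereas your two-sided version mirrors the block-cluster partition actually used in weak-admissibility $\mathcal{H}$-matrix constructions. Two minor observations: since your blocks have pairwise disjoint supports, the max-norm error of $K-\tilde K$ is the \emph{maximum} of the block errors rather than their sum, so the factor $L(4^d-1)$ in your error estimate is unnecessary (the paper makes the same lossy triangle-inequality estimate, and it is harmless for the claimed bound); and the uniform-$M$ bookkeeping you flag as the bulk of the remaining work is handled in the paper simply by taking $M=\max\{M^{(i)}_{k,j}\}$ over the finitely many blocks and absorbing $M/\magn{K}_{max}$ into the constant $c$ --- i.e.\ the paper does not carry out that uniformity argument in greater detail than you do, so you should not regard it as a gap relative to the published proof.
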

        \vspace{0.5cm}
        \begin{theorem}
            If \(X\) and \(Y\) share a hyper-surface of dim $d' \in \{1,2,\ldots,d-1\}$ ($d'$-hyper-surface-sharing $d$-hyper-cubes (domains)), then
            $\boxed{\mathcal{R}(N)= N^{d' / d}}$. \\
            For example, if $Y := [0,r]^{d}$ and $X := [-r,0]^{d-d'} \times [0,r]^{d'}$ then the interaction between $X$ and $Y$ is a $d'$-hyper-surface-sharing interaction $d' \in \{1,2,\ldots,d-1\}$. For instance, if $d=3$, then $d'=1$ and $d'=2$, will be the edge-sharing and face-sharing cases in 3$D$, respectively. By $d'$ hyper-surface sharing, we mean the identical hyper-cubes $X$ and $Y$ of side $r$ are connected by the translation $Y = X + r (1,1\dots,1,\overbrace{0,0,\dots,0}^{d'})$.
        \end{theorem}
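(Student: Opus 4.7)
The plan is to prove Theorem 2 by induction on $d'$, with the base case $d'=0$ supplied by Theorem 1 (vertex-sharing). The overall scheme mirrors the subdivision strategy used for vertex-sharing: bisect both $X$ and $Y$ along every coordinate direction to obtain $2^d$ congruent sub-cubes of each, and decompose the interaction matrix as a block-sum over the $2^{2d}$ sub-cube pairs. The key structural observation is that this subdivision preserves the shape of the problem: each sub-pair is either well-separated, or shares a hyper-surface of some dimension $d'' \leq d'$.

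My first step would be a careful combinatorial classification of the $2^{2d}$ sub-pairs. A sub-pair $(X_{\pmb{\beta}},Y_{\pmb{\alpha}})$ is $d'$-sharing exactly when both $\pmb{\alpha}$ and $\pmb{\beta}$ choose the adjacent-to-interface octant in each of the $d-d'$ transverse directions and satisfy $\alpha_i=\beta_i$ in each of the $d'$ parallel directions; this yields exactly $2^{d'}$ such sub-pairs. Every other non-separated sub-pair is $d''$-sharing for some $d''<d'$, while the remaining sub-pairs are well-separated, with distance at least the sub-cube side length.

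Next I would bound the rank of each block. For well-separated sub-pairs, the separation-to-size ratio is preserved under bisection, so \Cref{th4} provides a rank-$p^d$ Chebyshev approximant with max-norm error $\lesssim 4 M V_d\, \rho^{-p}/(\rho-1)$ for some $\rho>1$ that is uniform across all levels. For each $d''$-sharing sub-pair with $d''<d'$, I would invoke the inductive hypothesis (Theorem 1 for $d''=0$, Theorem 2 for $d''\geq 1$) to bound the block's rank by $(N/2^d)^{d''/d}$ times logs. The $2^{d'}$ $d'$-sharing sub-pairs are handled by recursion on $N$. Summing block ranks produces the recurrence
\begin{equation*}
    R_{d'}(N) \;\leq\; 2^{d'}\,R_{d'}(N/2^{d}) \;+\; \sum_{d''=0}^{d'-1} c_{d''}\,R_{d''}(N/2^{d}) \;+\; C\, p^{d},
\end{equation*}
where the combinatorial constants $c_{d''},C$ depend only on $d$ and $d'$.

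Finally, I would solve this recurrence. With $a=2^{d'}$ and $b=2^d$, the master-theorem critical exponent is $\log_{b}a = d'/d$, so the solution scales as $N^{d'/d}$ and the inductive contributions from $d''<d'$ are strictly lower order. Choosing the per-block interpolation degree as $p=\Theta\!\left(\log(\mathcal{R}(N)/\delta)\right)$ ensures the cumulative max-norm error across all $O(\log_{2^d} N)$ levels and $O(N^{d'/d})$ well-separated leaf blocks remains below $\delta\,\magn{K}_{max}$, and yields the claimed $\log^{d-d'}$ factor. The main obstacle, as in the proof of Theorem 1, will be the careful error bookkeeping: one must keep $\rho$ uniformly bounded away from $1$ under subdivision, allocate the $\delta$ budget correctly across levels and sub-blocks, and identify precisely why the $d'$ parallel directions (which are resolved by the geometric subdivision rather than by polynomial interpolation) contribute a saving that reduces the naive $\log^{d}$ to $\log^{d-d'}$.
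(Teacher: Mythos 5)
Your decomposition is genuinely different from the paper's. The paper never subdivides $X$ and never inducts on $d'$: it peels $Y$ alone with a one-sided adaptive $2^d$-tree, so that at level $k$ the $\bkt{2^d-2^{d'}}\bkt{2^{d'}}^{k-1}$ peeled sub-cubes are each separated from \emph{all of} $X$ by at least their own side length in some transverse coordinate and can be compressed directly by Chebyshev interpolation, while the residual slab $Y_{\kappa+1}=[0,r/2^{\kappa}]^{d-d'}\times[0,r]^{d'}$ (containing $N^{d'/d}$ particles) is kept as an exact block of rank $N^{d'/d}$. Your two-sided subdivision with the classification of the $2^{2d}$ sub-pairs (exactly $2^{d'}$ recursing, the rest either well-separated or $d''$-sharing with $d''<d'$) is correct as geometry, and your master-theorem analysis with $a=2^{d'}$, $b=2^d$ does recover the $N^{d'/d}$ growth, with the inductive $d''<d'$ contributions being lower order as you say. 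The cost of your route is that you must carry the induction and re-prove admissibility of every touching sub-pair type, which the paper's one-sided peeling avoids entirely.

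The genuine gap is the one you flag yourself in your last sentence and do not close: your recurrence charges each terminating block a rank of $Cp^{d}$, so unrolling gives $\sum_{k}\bkt{2^{d'}}^{k}\,Cp^{d}=\mathcal{O}\bkt{N^{d'/d}\,p^{d}}$, i.e.\ $\mathcal{O}\bkt{N^{d'/d}\log^{d}\bkt{N/\delta}}$, which misses the claimed exponent $d-d'$ on the logarithm. This is not a bookkeeping issue that better $\delta$-allocation fixes; it is a missing idea. The paper's mechanism is to interpolate $F\bkt{\xb,\yb}$ on each peeled block $Y_{k,j}$ with a $\bkt{p_{k,j}+1}^{d-d'}$ tensor grid \emph{only in the $d-d'$ transverse coordinates of $\yb$} (the directions of non-smoothness), assigning each block a rank of $\bkt{1+p_{k,j}}^{d-d'}$ rather than $\bkt{1+p_{k,j}}^{d}$; summing $\beta\bkt{2^{d'}}^{k}\bkt{1+p}^{d-d'}$ over levels, adding the rank-$N^{d'/d}$ residual slab, and then choosing $\delta_1=\delta\bkt{2^{d'}-1}/\bkt{\beta 2^{d'}N^{d'/d}}$ yields the stated $\mathcal{O}\bkt{N^{d'/d}\log^{d-d'}\bkt{N/\delta}}$. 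To complete your argument you would need to import this partial-interpolation step (or an equivalent device exploiting smoothness in the $d'$ parallel directions) into the per-block rank bound; without it your proof establishes $\mathcal{R}(N)=N^{d'/d}$ but not the logarithmic factor asserted in the theorem.
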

    \end{tcolorbox}
 We also plot the numerical ranks of the different interactions in different dimensions for the following eight kernel functions using MATLAB.
 \begin{enumerate}
    \item $F_1(\xb,\yb) = 1/r$; \hfill Green's function for $3$D Laplace operator
    \item $F_2(\xb,\yb) = \log(r)$; \hfill Green's function for $2$D Laplace operator
    \item $F_3(\xb,\yb) = \dfrac{\exp\bkt{ir}}{r}$; \hfill Green's function for $3$D Helmholtz operator
    \item $F_4(\xb,\yb) = H_{2}^{(1)}\bkt{r}$; \hfill Hankel function
    \item $F_5(\xb,\yb) = r$; \hfill Poly-harmonic radial basis function
    \item $F_6(\xb,\yb) = \sin\bkt{r}$;
    \item $F_7\bkt{\xb,\yb} = \dfrac1{\sqrt{1+r}}$;
    \item $F_8\bkt{\xb,\yb} = \exp\bkt{-r}$; \hfill Matérn covariance kernel with $\nu=1/2$
\end{enumerate}
 where $r = \magn{\xb-\yb}_2$ with $\xb \in \text{interior}\bkt{X} \text{ and } \yb \in \text{interior} \bkt{Y}$.
 \begin{remark}
     We use \emph{max} norm to prove our theorems. But we only report the numerical rank plots based on the \cref{num_rank2} since this numerical rank definition is commonly used. 
 \end{remark}
 \begin{remark}
     Note that the first four kernels have singularity along the line $\yb=\xb$, while the next four are not analytic along the line $\yb=\xb$.
 \end{remark}
 \begin{remark}
     Note that the all the kernels above satisfy the analytic continuation assumption as stated before.
 \end{remark}
 \begin{remark}
 The kernels $F_3$ and $F_4$ are complex-valued kernels. However, our result is still applicable by looking at the real and imaginary parts of these complex-valued kernels.
 \end{remark}
\section{Rank growth of all the interactions in \texorpdfstring{$d$}{d} dimensions} \label{ddim}
    In this section, we discuss and prove our theorems. We also plot the numerical ranks of different interactions in four dimensions $(d=4)$, three dimensions $(d=3)$, two dimensions $(d=2)$, one dimension $(d=1)$.
    
    \subsection{\textbf{\textit{Rank growth of far-field domains}}} The rank growth of far-field is a well-known result but still, for the sake of completeness we will do it once again. 
        \begin{figure}[H]
    \centering  \subfloat[Far-field nodes in $1$D]{\label{far_inter_1}\resizebox{4cm}{!}{
        \begin{tikzpicture}[scale=0.75]
            \draw[|-|,dashed] (0,0) node[anchor=north] {$-r$} -- (2,0);
            \draw[|-|] (-2,0) node[anchor=north] {$-2r$} -- (0,0);
            \draw[|-|] (2,0) -- (4,0);   
            \node[anchor=north] at (2,0) {$0$};
            \node[anchor=north] at (4,0) {$r$};
            
            \node[anchor=north] at (-1,0.75) {X};
            \node[anchor=north] at (3,0.75) {Y};
        \end{tikzpicture}
        }}
        \qquad \qquad \qquad
    \centering   \subfloat[Far-field squares in $2$D]{\label{far_inter_2}\resizebox{4cm}{!}{
		\begin{tikzpicture}[scale=0.75]
			\draw (-1,-1) rectangle (1,1);
			\draw (3,-1) rectangle (5,1);
			\node at (0,0) {$X$};
			\node at (4,0) {$Y$};
			\draw [<->] (-1,-1.25) -- (1,-1.25);
			\draw [<->] (1,-1.25) -- (3,-1.25);
			\draw [<->] (3,-1.25) -- (5,-1.25);
			\draw [<->] (-1.25,-1) -- (-1.25,1);
			\node at (2,-1.5) {$r$};
			\node at (0,-1.5) {$r$};
			\node at (4,-1.5) {$r$};
			\node at (-1.5,0) {$r$};
		\end{tikzpicture}
		}}
        \qquad \qquad \qquad
        \centering   \subfloat[Far-field cubes in $3$D]{\label{far_inter_3}\resizebox{4cm}{!}{
                    	\begin{tikzpicture}[scale=0.5]
         				\draw (2,2,0)--(0,2,0)--(0,2,2)--(2,2,2)--(2,2,0)--(2,0,0)--(2,0,2)--(0,0,2)--(0,2,2);
         		     	\draw
         		     	(2,2,2)--(2,0,2);
         		     	\draw
         		     	(2,0,0)--(0,0,0)--(0,2,0);
         		     	\draw
         		     	(0,0,0)--(0,0,2);
         			    \node at (1,1,1) {$X$};
         		
                        \draw
                        (4,0,2) rectangle (6,2,2);
                        \draw
                        (4,0,0) rectangle (6,2,0);
                        \draw
                        (6,2,0)--(6,2,2);
                        \draw
                        (6,0,0)--(6,0,2);
                        \draw (4,0,0) -- (4,0,2);
                        \draw (4,2,0) -- (4,2,2);
                        \node at (5,1,1) {$Y$};
         			\end{tikzpicture}
        }}
        \caption{Far-field interaction in $d=1,2,3$}
    \end{figure}
    \begin{lemma} \label{far_th}
        Let $X$ and $Y$ be at least one hyper-cube away and $K$ be the corresponding interaction matrix then for a given
        $\delta >0$, there exists a matrix $\Tilde{K}$ with rank $p_{\delta}$ such that $\dfrac{\magn{K  - \Tilde{K}}_{max}}{\magn{K}_{max}} < \delta$, where $p_\delta\in \mathcal{O}\bkt{\bkt{ \log \bkt{\frac{1}{\delta}}}^d}$
    \end{lemma}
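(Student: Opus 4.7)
The plan is to invoke the multi-dimensional Chebyshev interpolation bound of \cref{th4} on the hyper-cube $Y$ with the variable $\xb \in X$ treated as a parameter. The key observation is that once $X$ and $Y$ are at least one hyper-cube apart, the singular set $\{\yb = \xb\}$ stays at a positive distance (at least $r$) from $Y$, so for any fixed $\xb \in X$ the kernel $F(\xb,\cdot)$ can be analytically continued from $Y$ to a generalized Bernstein ellipse $\mathcal{B}(Y,\rho')$ with a constant parameter $\rho'$ that depends only on the geometric configuration (i.e., the ratio of separation to cube side length) and \emph{not} on $N$ or on $\xb$.

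First I would fix $\rho > 1$ small enough so that $\mathcal{B}(Y,\rho(1,\dots,1))$ does not intersect $X$; a rough choice is any $\rho$ with $(\rho+\rho^{-1})/2 < 2$ after rescaling $Y$ to $[-1,1]^d$, which is a purely geometric constant. By the analytic continuation assumption on $F$, there exists $M_{X,Y} < \infty$ such that $\abs{F_a(\xb,\yb)} \leq M_{X,Y}$ for all $(\xb,\yb) \in X \times \mathcal{B}(Y,\rho')$. Applying \cref{th4} with $p_1 = \cdots = p_d = p$, pointwise in $\xb$, yields
\begin{equation*}
    \max_{\yb \in Y} \abs{F(\xb,\yb) - \tilde{F}(\xb,\yb)} \leq 4 M_{X,Y} V_d \, \frac{\rho^{-p}}{\rho-1},
\end{equation*}
with $V_d$ a constant depending only on $d$ and $\rho$, independent of $p$. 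Since this estimate is uniform in $\xb \in X$, it gives $\magn{K-\tilde{K}}_{max} \leq 4 M_{X,Y} V_d \rho^{-p}/(\rho-1)$, where $\tilde{K}$ is the matrix of the interpolant $\tilde{F}(\xb,\yb) = \sum_{\pmb{k}\in J} F(\xb, y^{\pmb{k}}) R_{\pmb{k}}(\yb)$, which visibly has rank at most $\abs{J} = (p+1)^d$.

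Next I would convert this absolute bound into the required relative bound. Let $C_{X,Y} = \magn{K}_{max}$, which is a strictly positive constant determined by the configuration and the kernel. Setting $4 M_{X,Y} V_d \rho^{-p}/(\rho-1) < \delta \, C_{X,Y}$ and solving for $p$ gives $p \geq \log_{\rho}(C/\delta)$ for a constant $C$ depending only on $M_{X,Y}, V_d, \rho, C_{X,Y}$. Hence it suffices to take $p \in \mathcal{O}(\log(1/\delta))$, which produces a rank bound
\begin{equation*}
    p_\delta \leq (p+1)^d \in \mathcal{O}\bkt{\bkt{\log(1/\delta)}^d},
\end{equation*}
exactly as claimed.

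The only non-trivial step is justifying that the constants $M_{X,Y}$, $C_{X,Y}$, and $\rho$ are all independent of $N$; this follows because the geometry (one hyper-cube of separation, identical side lengths) does not change with $N$, and the kernel's analytic continuation assumption supplies a uniform bound on any compact subset of its domain of analyticity. I do not expect serious obstacles — the argument is essentially a clean, parameterized application of \cref{th4} — the only care needed is in handling the uniformity of $M_{X,Y}$ in $\xb$, which is immediate from compactness of $X$ and continuity of $F_a$.
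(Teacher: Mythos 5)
Your proposal is correct and follows essentially the same route as the paper's proof: tensor-product Chebyshev interpolation of $F(\xb,\cdot)$ on $Y$ via \cref{th4} (i.e., the bound \cref{eq1}), a bound $M$ uniform over the targets, and conversion of the absolute bound $4MV_d\rho^{-p}/(\rho-1)$ into the relative bound by dividing by $\magn{K}_{max}$ and choosing $p = \ceil{\log(c/(\delta(\rho-1)))/\log\rho}$, which yields rank $(p+1)^d \in \mathcal{O}\bkt{\log^d(1/\delta)}$. The only cosmetic difference is that the paper takes $M = \max_i M_i$ over the finitely many target points rather than invoking compactness of $X$.
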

    \begin{proof}
            If \(X\) and \(Y\) are at least one hyper-cube away, i.e., $\text{dist}\bkt{X,Y} \geq \text{diam}(Y)$ then the interaction between them is called far-field interactions. The far-field interaction in different dimensions is shown in~\cref{far_inter_1,far_inter_2,far_inter_3}. Let $Y = [0,r]^{d}$ and $X = [-2r,-r] \times [0,r]^{d-1}$ be two hyper-cubes, which are one hyper-cube away. We choose a $\bkt{p+1}^d$ tensor grid of Chebyshev nodes inside the hyper-cube $Y$ to interpolate $F\bkt{\xb,\yb}$ on the Chebyshev grid along $\yb$. Let $\Tilde{K}$ be the approximation of the matrix \(K\) and $M_i = \displaystyle \sup_{\yb \in \mathcal{B}\bkt{{Y,\rho}}} \abs{F_a\bkt{\xb_i,\yb}}$ ,  $\rho \in (1,\alpha)^d$, for some $\alpha >1$. Then by~\cref{eq1} the absolute error along the $i^{th}$ row is given by (MATLAB notation)
         \begin{equation}\label{far_d}
             \abs{\bkt{K \bkt{i,:} -\Tilde{K} \bkt{i,:}}} \leq 4 M_i V_d \frac{\rho^{-p}}{\rho -1}, \qquad 1 \leq i \leq N
         \end{equation}
         Let $M = \displaystyle \max_{1 \leq i\leq N} \{M_i\}$. Therefore,
         \begin{equation}
             \magn{K  - \Tilde{K}}_{max} \leq 4 M V_d \dfrac{\rho^{-p}}{\rho -1} 
             \implies \dfrac{\magn{K  - \Tilde{K}}_{max}}{\magn{K}_{max}} \leq \dfrac{4M V_d}{\magn{K}_{max}} \dfrac{\rho^{-p}}{\rho -1} = \dfrac{c\rho^{-p}}{\rho -1}
         \end{equation}
    where $c = \dfrac{4M V_d}{\magn{K}_{max}}$. Now, choosing $p$ such that the above relative error to be less than $\delta$ (for some $\delta >0$), we obtain
        $p = \ceil{\frac{ \log \bkt{\frac{c}{\delta (\rho-1)}}}{\log(\rho)}} \implies \dfrac{\magn{K  - \Tilde{K}}_{max}}{\magn{K}_{max}} < \delta$.
     Since, the rank of the matrix $\tilde{K}$ is $\bkt{p+1}^d$, i.e., the rank of $\tilde{K}$ to be bounded above by
        $\bkt{1 + \ceil{\frac{ \log \bkt{\frac{c}{\delta (\rho-1)}}}{\log(\rho)}}}^d$.
    Therefore, the rank of $\Tilde{K}$ scales $\mathcal{O}\bkt{ \log^d \bkt{\frac{1}{\delta}}}$ with $ \dfrac{\magn{K  - \Tilde{K}}_{max}}{\magn{K}_{max}} < \delta$.
    \end{proof}
     The numerical rank growth of the far-field interaction in $4$D, $3$D, $2$D and $1$D of the eight functions as described in~\Cref{Preliminaries} is plotted in~\Cref{fig:far_field} and tabulated in~\cref{tab:res_4d_far_log,tab:res_3d_far_log,tab:res_2d_far_log,tab:res_1d_far_log}.
\begin{figure}[H]
    \centering
    \subfloat[Numerical rank growth of $4$D far-field]{\includegraphics[height=3cm, width=5.5cm]{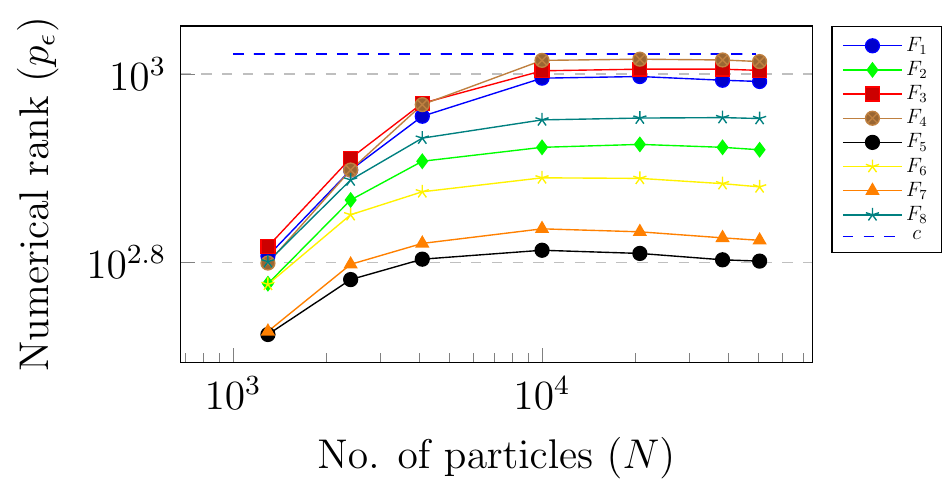}\label{fig:res_4d_far_log}}\qquad \qquad%
    \subfloat[Numerical rank growth of $3$D far-field]{\includegraphics[height=3cm, width=5.5cm]{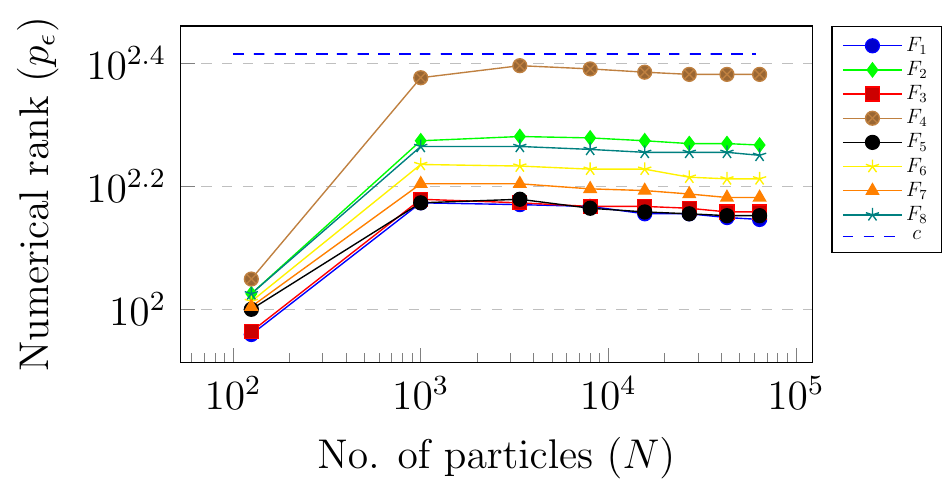}\label{fig:res_3d_far_log}}\qquad \qquad%
    \subfloat[Numerical rank growth of $2$D far-field]{\includegraphics[height=3cm, width=5.5cm]{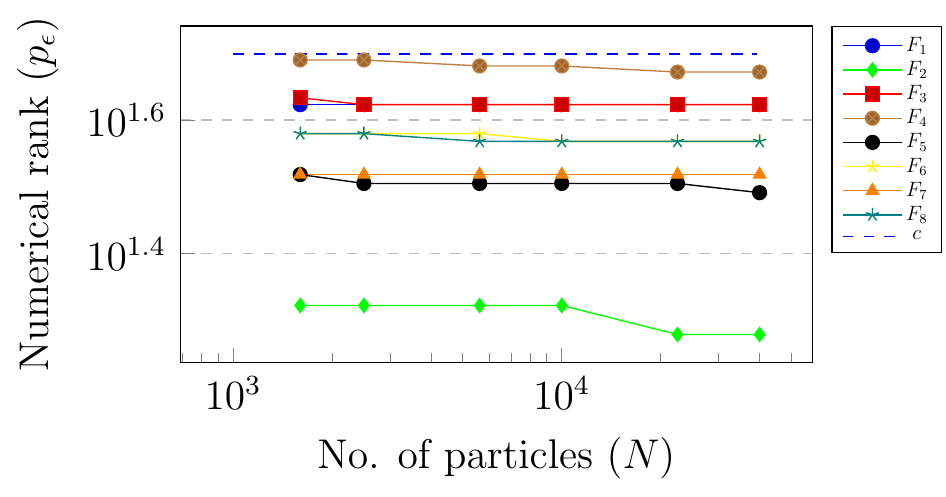}\label{fig:res_2d_far_log}}\qquad \qquad%
    \subfloat[Numerical rank growth of $1$D far-field]{\includegraphics[height=3cm, width=5.5cm]{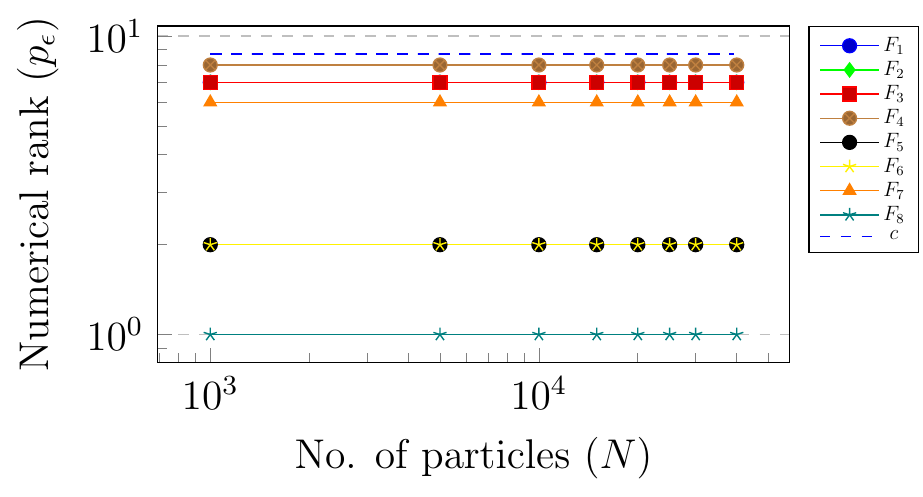}\label{fig:res_1d_far_log}}\qquad%
    \caption{Numerical rank growth with $N$ in different dimension for the far-field interaction}
    \label{fig:far_field}
\end{figure}
\subsection{Rank growth of vertex-sharing domains}
        We are going to prove one of our main theorems on rank growth of \textbf{vertex-sharing} interaction in $d$ dimension. 
           \begin{figure}[H]
        \centering \subfloat[vertex-sharing domain]{\label{1d_ve}\resizebox{4cm}{!}{
        \begin{tikzpicture}
            (2,0)  -- (4,0);
        \draw[|-|]  (0,0) -- (2,0);   
        \draw[|-|]  (4,0) -- (2,0); 
        \node[anchor=north] at (2,0) {$0$};
        \node[anchor=north] at (4,0) {$r$};
        \node[anchor=north] at (0,0) {$-r$};
        \node[anchor=north] at (1,0.7) {X};
        \node[anchor=north] at (3,0.7) {Y};
        \end{tikzpicture}
        }}
        \qquad
        \centering \subfloat[Hierarchical subdivision at level 1]{\label{1d_ver_sub_l1}\resizebox{4cm}{!}{
        \begin{tikzpicture}
        \draw[ |-|,
                decoration={markings,
                    mark= at position 0.5 with {\arrow{|}}
                },
                postaction={decorate}
            ]
            (2,0)  -- (4,0);
        \draw[|-|]  (0,0) -- (2,0);   
        \node[anchor=north] at (2,0) {$0$};
        \node[anchor=north] at (4,0) {$r$};
        \node[anchor=north] at (0,0) {$-r$};
        \node[anchor=north] at (3,0) {$\frac{r}{2}$};
        \node[anchor=north] at (1,0.7) {X};
        \node[anchor=north] at (2.5,0.7) {$Y_2$};
        \node[anchor=north] at (3.5,0.7) {$Y_1$};
        \end{tikzpicture}
        }}
        \qquad
        \centering \subfloat[Hierarchical subdivisions]{\label{1d_ver_sub}\resizebox{4cm}{!}{
        \begin{tikzpicture}
        \draw[ |-|,
                decoration={markings,
                    mark= at position 0.0625 with {\arrow{|}},
                    mark= at position 0.125 with {\arrow{|}},
                    mark= at position 0.25 with {\arrow{|}},
                    mark= at position 0.5 with {\arrow{|}}
                },
                postaction={decorate}
            ]
            (2,0)  -- (4,0);
        \draw[|-|]  (0,0) -- (2,0);   
        \node[anchor=north] at (2,0) {$0$};
        \node[anchor=north] at (4,0) {$r$};
        \node[anchor=north] at (0,0) {$-r$};
        \node[anchor=north] at (3,0) {$\frac{r}{2}$};
        \node[anchor=north] at (2.5,0) {$\frac{r}{4}$};
        \node[anchor=north] at (1,0.7) {X};
        \end{tikzpicture}
        }}
        \caption{vertex-sharing domain and subdivisions in $1$D}
        \label{1d_ver}
    \end{figure} 
\begin{figure}[H]
    \centering \subfloat[vertex-sharing squares]{\label{2d_vertex}\resizebox{3.8cm}{!}{
        \begin{tikzpicture}
            \draw (-1,-1) rectangle (1,1);
            \draw [<->] (-1,-1.25) -- (1,-1.25);
            \node at (0,-1.5) {$r$};
            \draw [<->] (-1.25,-1) -- (-1.25,1);
            \node at (-1.5,0) {$r$};
            \draw [<->] (1,.75) -- (3,.75);
            \node[anchor=south] at (2,.25) {$r$};
            \draw [<->] (3.25,1) -- (3.25,3);
            \node at (3.5,2) {$r$};
            \draw (1,1) rectangle (3,3);
            \node at (0,0) {$X$};
         	\node at (2,2) {$Y$};
            \end{tikzpicture}
            }}
        \qquad
        \centering \subfloat[subdivision at level 1]{\label{2d_vertex_divide_level1}\resizebox{3.8cm}{!}{
        \begin{tikzpicture}
            \draw (-1,-1) rectangle (1,1);
            \node at (0,0) {$X$};
            
            \draw (1,1) rectangle (3,3);
            \draw (1,1) grid (3,3);
            \node at (2.5,1.5) {\small $Y_{1,3}$};
            \node at (1.5,2.5) {\small $Y_{1,1}$};
            \node at (2.5,2.5) {\small $Y_{1,2}$};
            \node at (1.5,1.5) {\small $Y_{2}$};
            \draw [<->] (-1,-1.25) -- (1,-1.25);
            \node at (0,-1.5) {$r$};
            \draw [<->] (-1.25,-1) -- (-1.25,1);
            \node at (-1.5,0) {$r$};
            \draw [<->] (1,.75) -- (3,.75);
            \node[anchor=south] at (2,.25) {$r$};
            \draw [<->] (3.25,1) -- (3.25,3);
            \node at (3.5,2) {$r$};
            
            \end{tikzpicture}
            }}
    \qquad 
    \centering \subfloat[Hierarchical subdivisions]{\label{2d_vertex_divide}\resizebox{3.8cm}{!}{
        \begin{tikzpicture}
            \draw (-1,-1) rectangle (1,1);
            \node at (0,0) {$X$};
            \draw (1,1) rectangle (3,3);
            \draw (1,1) grid (3,3);
            \draw[step=0.5] (1,1) grid (2,2);
            \node at (2.5,1.5) {\small $Y_{1,3}$};
            \node at (1.5,2.5) {\small $Y_{1,1}$};
            \node at (2.5,2.5) {\small $Y_{1,2}$};
            \draw[step=0.25] (1,1) grid (1.5,1.5);
            \node at (1.75,1.75) {\tiny $Y_{2,2}$};
            \node at (1.75,1.25) {\tiny $Y_{2,1}$};
            \node at (1.25,1.75) {\tiny $Y_{2,3}$};
            \draw[step=0.125] (1,1) grid (1.25,1.25);
            \draw [<->] (-1,-1.25) -- (1,-1.25);
            \node at (0,-1.5) {$r$};
            \draw [<->] (-1.25,-1) -- (-1.25,1);
            \node at (-1.5,0) {$r$};
            \draw [<->] (1,.75) -- (3,.75);
            \node[anchor=south] at (2,.25) {$r$};
            \draw [<->] (3.25,1) -- (3.25,3);
            \node at (3.5,2) {$r$};
            
            \end{tikzpicture}
            }}
            \caption{vertex-sharing squares and its subdivisions in $2$D}
            \label{2d_ver}
\end{figure}
\begin{figure}[H]
    \centering
    \subfloat[subdivision at level $1$]{\resizebox{3.2cm}{!}{
                \tdplotsetmaincoords{75}{210}
            \begin{tikzpicture}[tdplot_main_coords]
                \cubex{0}{0}{0}{2}{line width=0.5mm,black}{};
	            \cubex{1}{1}{1}{-1.2}{draw=none,line width=0.5mm,black}{$\color{blue}Y_1$};
	            \cubex{2}{2}{2}{-1}{line width=0.25mm,black!60}{$Y_2$};
                \cubex{2}{2}{2}{2}{line width=0.5mm,red}{$X$};
         	\end{tikzpicture}
        }}
        \qquad \qquad
    \subfloat[subdivision at level $2$]
    {\resizebox{3.2cm}{!}{
	        \label{3d_v} 
	           \tdplotsetmaincoords{75}{210}
	        \begin{tikzpicture}[tdplot_main_coords]
            	\cubex{0}{0}{0}{2}{line width=0.5mm,black}{};
	            \cubex{1}{1}{1}{-1.2}{draw=none,line width=0.5mm,black}{$\color{blue}Y_1$};
	            \cubex{2}{2}{2}{-1}{line width=0.25mm,black!60}{};
	            \cubex{2}{2}{2}{-0.5}{line width=0.15mm,black!60}{$Y_3$};
                \cubex{2}{2}{2}{2}{line width=0.5mm,red}{$X$};
            \end{tikzpicture}
        }
    }
    \caption{vertex-sharing cubes and its subdivisions in $3$D}
    \label{3d_ver}
\end{figure}   
        \begin{theorem} \label{ver_th}
            Let $X$ and $Y$ be hyper-cubes that share a vertex and $K$ be the corresponding interaction matrix, then for a given
            $\delta >0$, there exists a matrix $\Tilde{K}$ with rank $p_{\delta}$ such that $\dfrac{\magn{K  - \Tilde{K}}_{max}}{\magn{K}_{max}} < \delta$, where $p_\delta\in \mathcal{O}\bkt{\log_{2^d}(N) \log^d \bkt{\frac{ \log_{2^d}(N)}{\delta}}}$
        \end{theorem}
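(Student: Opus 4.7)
The plan is to hierarchically subdivide $Y$ around the shared vertex, converting the single vertex-sharing interaction into a collection of far-field-type sub-interactions (each handled by \cref{th4}) together with a tiny residual sub-problem. After an affine change of coordinates, assume $X = [-r,0]^d$ and $Y = [0,r]^d$ share only the origin. For $\ell \geq 0$, set $Y_{(\ell)} = [0,r/2^\ell]^d$, so $Y_{(0)} = Y$. At each level $\ell \geq 1$, the $2^d$ dyadic children of $Y_{(\ell-1)}$ consist of the shrinking vertex-sharing piece $Y_{(\ell)}$ together with $2^d - 1$ \emph{separated children} $Z^{(\ell)}_1,\ldots,Z^{(\ell)}_{2^d-1}$, each of side $r/2^\ell$ and each disjoint from $X$. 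I stop the recursion at level $L = \log_{2^d}(N)$, at which point $Y_{(L)}$ contains only $\mathcal{O}(1)$ source points and can be kept uncompressed.

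The central geometric observation is that each separated child $Z^{(\ell)}_j$ is, in at least one coordinate direction, separated from the corresponding coordinate projection of $X$ by a gap equal to its side length $r/2^\ell$. I use this to draw a product Bernstein ellipse $\mathcal{B}\bkt{Z^{(\ell)}_j,\rho'}$ with $\rho' \in (1,\infty)^d$ that contains $Z^{(\ell)}_j$ yet avoids every point of $X$, and in particular avoids the singular diagonal $\yb=\xb$. Crucially, the dyadic subdivision produces geometrically self-similar configurations from level to level, so a \emph{single} parameter vector $\rho'$ (with $\rho := \min_i \rho_i > 1$) may be used simultaneously at every level and for every separated child. Under the analytic continuation assumption, this yields a uniform bound $\abs{F_a} \leq M$ on each $X \times \mathcal{B}\bkt{Z^{(\ell)}_j,\rho'}$, and \cref{th4} then delivers, for each separated child, a rank-$(p+1)^d$ tensor-Chebyshev interpolant $\tilde{K}_{X,Z^{(\ell)}_j}$ with
\begin{equation*}
    \magn{K_{X,Z^{(\ell)}_j} - \tilde{K}_{X,Z^{(\ell)}_j}}_{max} \leq \frac{4MV_d\,\rho^{-p}}{\rho - 1}.
\end{equation*}

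Assembling $\tilde{K}$ by column-partitioning $K$ according to the subdivision gives $\magn{K - \tilde{K}}_{max} = \displaystyle\max_{\ell,j} \magn{K_{X,Z^{(\ell)}_j} - \tilde{K}_{X,Z^{(\ell)}_j}}_{max}$. Choosing $p = \mathcal{O}\bkt{\log\bkt{L/\delta}}$ uniformly across all $L(2^d-1)$ separated children --- a slight overkill of $\log(L)$ that absorbs any polynomial-in-$L$ growth of $M/\magn{K}_{max}$ as the children march toward the singular vertex --- ensures each sub-block error lies below $\delta\magn{K}_{max}$; the residual $\mathcal{O}(1)$-column block $K_{X,Y_{(L)}}$ is retained exactly and contributes rank $\mathcal{O}(1)$. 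Summing ranks yields
\begin{equation*}
    \text{rank}\bkt{\tilde{K}} \leq L(2^d-1)(p+1)^d + \mathcal{O}(1) = \mathcal{O}\bkt{\log_{2^d}(N)\,\log^d\bkt{\log_{2^d}(N)/\delta}},
\end{equation*}
as required. The main obstacle is the self-similarity step: I must certify that one Bernstein parameter $\rho$ suffices at every level in spite of the children shrinking toward $\yb = \xb$, and that $M/\magn{K}_{max}$ is controlled uniformly in $\ell$. Both rest on comparing the rate at which the ellipse approaches the singularity against the rate at which the closest source--target pair inside $K$ does the same; once this is in place, everything else is column-partition bookkeeping built on top of \cref{th4}.
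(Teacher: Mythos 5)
Your proposal is correct and follows essentially the same route as the paper's own proof: the same dyadic subdivision of $Y$ toward the shared vertex into $2^d-1$ separated children per level over $\kappa \sim \log_{2^d}(N)$ levels, the same tensor-Chebyshev interpolation via \cref{th4} with a single Bernstein parameter $\rho$ justified by self-similarity, an exactly retained $\mathcal{O}(1)$ residual block, and the same rank count $\kappa\bkt{2^d-1}(p+1)^d + \mathcal{O}(1)$. The only substantive difference is the error aggregation: the paper sums the block errors via the triangle inequality (forcing $\delta_1 = \delta/\bkt{(2^d-1)\kappa}$ and hence the $\log_{2^d}(N)$ inside the logarithm), whereas you note that the blocks have disjoint column supports so the max-norm of the total error is the \emph{maximum} of the block errors --- a valid and in fact slightly sharper piece of bookkeeping, after which you reintroduce the same $\log\bkt{\log_{2^d}(N)}$ slack to control $M/\magn{K}_{max}$, landing on the identical final bound.
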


        \begin{proof}
                    Consider two vertex-sharing $d$-hyper-cubes $Y = [0,r]^d$ and $X = [-r,0]^d$. We subdivide the hyper-cube \(Y\) hierarchically using an adaptive $2^d$ tree as shown in~\cref{1d_ver,2d_ver,3d_ver}. \href{https://sites.google.com/view/dom3d/vertex-sharing-domains/}{\textbf{The link here provides a better $3$D view}}. At level 1, we have total $2^d$ finer hyper-cubes. We again recursively subdivide the finer hype-cube adjacent to the vertex using the $2^d$ tree until the finest hyper-cube ($Y_{\kappa+1}$), which shares the vertex with $X$ contains one particle. This gives,
        \begin{equation}
             Y = \overbrace{Y_2 \bigcup_{j=1}^{2^{d}-1} Y_{1,j}}^{\text{At level 1}} = \overbrace{\left[0, \dfrac{r}{2} \right]^d \bigcup_{j=1}^{2^{d}-1} Y_{1,j}}^{\text{At level 1}}   = \cdots =   \overbrace{\left[0, \dfrac{r}{2^{\kappa}} \right]^d\bigcup_{k=1}^{\kappa} \bigcup_{j=1}^{2^{d}-1} Y_{k,j}}^{\text{At level } \kappa} = \overbrace{Y_{\kappa + 1} \bigcup_{k=1}^{\kappa} \bigcup_{j=1}^{2^{d}-1} Y_{k,j}}^{\text{At level } \kappa}
             \label{eqn_vertex_subdivide}
        \end{equation}
        where $Y_{k}$ to be the subdivision of domain $Y$ at level $k$ in the tree and $Y_{k,j}$ be the $j^{th}$ subdivision of $Y_{k}$ $\bkt{Y_{k,j} \text{ is a child of } Y_k}$ with $Y_{a,r} \bigcap Y_{b,s} \neq \emptyset$ iff $a=b$ $\&$ $r=s$. We have $Y = \displaystyle Y_{\kappa + 1} \bigcup_{k=1}^{\kappa} \bigcup_{j=1}^{2^{d}-1} Y_{k,j}$, \quad $Y_{\kappa + 1} = Y \setminus \displaystyle \bigcup_{k=1}^{\kappa} Y_{k}$ with $Y_{\kappa +1}$ having one particle and $\kappa \sim \log_{2^d} \bkt{N}$. Further, $\text{dist}(X,Y_k) >0$ for $k \in \{1,2,\dots, \kappa\}$ and $\text{dist}(X,Y_{\kappa+1}) =0$.  
        Let us define the matrix $K_{k} \in \Rb^{T \times N}$ by (MATLAB notaion):
            $$K_{k}(u,v) = \begin{cases}
            K(u,v) & \text{ if }\yb_v \in Y_{k}\\
            0 & \text{ otherwise}
            \end{cases}$$
        where $k \in \{1,2,\hdots,\kappa, \kappa +1 \}$
    and the matrix $K_{k,j} \in \Rb^{T \times N}$ by:
    $$K_{k,j}(u,v) = \begin{cases}
    K(u,v) & \text{ if }\yb_v \in Y_{k,j}\\
    0 & \text{ otherwise}
    \end{cases}$$
    where $k \in \{1,2,\hdots,\kappa \}$ is the level and $j$ is indicative of the $j^{th}$ subdivision at level $k$.
        Let $\Tilde{K}_{k,j}$ be the approximation of the matrix ${K_{k,j}}$, then approximation of matrix $K$ is given by
        \begin{equation}
             \Tilde{K} = \sum_{k=1}^{\kappa} \sum_{j=1}^{2^d -1} \Tilde{K}_{k,j} + K_{\kappa+1}
        \end{equation}
        We choose a $\bkt{p_{k,j}+1}^d$ tensor product grid on Chebyshev nodes in the hyper-cube $Y_{k,j}$ to obtain the polynomial interpolation of $F(\xb,\yb)$ along $\yb$. Let $M^{(i)}_{k,j} = \displaystyle \sup_{\yb \in \mathcal{B} \bkt{{Y_{k,j},\rho}}} \abs{F_a\bkt{\xb_i,\yb}}$,  $\rho \in (1,\alpha)^d$, for some $\alpha > 1$. Then using~\cref{eq1} the absolute error along the $i^{th}$ row of the matrix $K_{k,j}$ is given by (MATLAB notation)
        \begin{equation}\label{ver_d}
             \Bigl\lvert \bkt{K_{k,j} \bkt{i,:} -\Tilde{K}_{k,j} \bkt{i,:}} \Bigl\lvert \leq 4 M^{(i)}_{k,j} V_d \frac{\rho^{-p_{k,j}}}{\rho -1}, \qquad 1 \leq i \leq N, \quad 1 \leq k \leq \kappa, \quad 1 \leq j \leq 2^d-1
        \end{equation}
        Let $M = \max \{ M^{(i)}_{k,j} \} $. Therefore,
        \begin{equation}
            \magn{\bkt{K_{k,j} -\Tilde{K}_{k,j}}}_{max} \leq 4 M V_d \frac{\rho^{-p_{k,j}}}{\rho -1} \implies \dfrac{\magn{\bkt{K_{k,j} -\Tilde{K}_{k,j}}}_{max}}{\magn{K}_{max}} \leq \dfrac{4 M V_d}{\magn{K}_{max}} \frac{\rho^{-p_{k,j}}}{\rho -1} = \frac{c\rho^{-p_{k,j}}}{\rho -1}
        \end{equation}
    where $c = \dfrac{4 M V_d}{\magn{K}_{max}}$. Now choosing $p_{k,j}$ such that the above error is less than $\delta_1$ (for some $\delta_1>0$), we obtain $p_{k,j} = \ceil{\frac{ \log \bkt{\frac{c}{\delta_1 (\rho-1)}}}{\log(\rho)}} \implies \dfrac{\magn{\bkt{K_{k,j} -\Tilde{K}_{k,j}}}_{max}}{\magn{K}_{max}} < \delta_1$ 
        with rank of $\Tilde{K}_{k,j} $ is $ (1 + p_{k,j})^d$. \\ Let $p_{l,m} = \max \{p_{k,j} : k=1,2,\hdots, \kappa \text{ and } j = 1,2,\hdots, 2^d -1 \}$, which corresponds to the matrix $\Tilde{K}_{l,m}$. So, at level $k$ the absolute error of matrix $K_k$ is
        \begin{align}
            \magn{\bkt{K_{k} -\Tilde{K}_{k}}}_{max} =\magn{ \sum_{j=1}^{2^d -1}  \bkt{K_{k,j} -\Tilde{K}_{k,j}}}_{max} \leq  \sum_{j=1}^{2^d -1} \magn{ \bkt{K_{k,j}-\Tilde{K}_{k,j}}}_{max} < \bkt{2^d -1} \delta_1 \magn{K}_{max}
        \end{align}
        with rank of $\Tilde{K}_{k} $ is bounded above by $ \bkt{2^d -1} (1 + p_{l,m})^d$.
        Therefore, we get 
        \begin{align}
            \begin{split}
                \magn{\bkt{K - \Tilde{K}}}_{max} =\magn{ \bkt{K_1+ K_2+\cdots + K_\kappa} - \bkt{\Tilde{K_1}+\Tilde{K_2}+\cdots + \Tilde{K_\kappa}}}_{max} \\
            \leq \dsum_{k=1}^{\kappa} \magn{\bkt{K_k -\Tilde{K}_k }}_{max} < \bkt{2^d -1} \kappa \delta_1 \magn{K}_{max}
            \end{split}
        \end{align}
        with the rank of $\Tilde{K} $ is bounded above by $ 1+ \bkt{2^d -1} \kappa (1 + p_{l,m})^d = 1+ \bkt{2^d -1} \kappa \bkt{1 + \ceil{\frac{ \log \bkt{\frac{c}{\delta_1 (\rho-1)}}}{\log(\rho)}}}^d$.
        Note that, the rank of $K_{\kappa +1}$ is one. If we set $\delta_1 = \dfrac{\delta}{\bkt{2^d -1} \kappa}$, then $\dfrac{ \magn{K - \Tilde{K}}_{max}}{\magn{K}_{max}} < \delta$ with rank of $\Tilde{K}$ is bounded above by $ 1 + \bkt{2^d -1} \log_{2^d}(N) \bkt{1+\ceil{\frac{ \log \bkt{\frac{c \bkt{2^d -1} \log_{2^d}(N)}{\delta (\rho-1)}}}{\log(\rho)}}}^d$.
        
        Hence, the rank of $\Tilde{K}$ scales $\mathcal{O}\bkt{\log_{2^d}(N) \log^d \bkt{\frac{ \log_{2^d}(N)}{\delta}}}$ with $ \dfrac{\magn{K - \Tilde{K}}_{max}}{\magn{K}_{max}} < \delta$. 
        \end{proof}
The numerical rank plots of the $4$D, $3$D, $2$D and $1$D vertex-sharing interaction of the eight functions as described in~\Cref{Preliminaries} are shown in~\cref{fig:vertex_rank} and tabulated in~\cref{tab:res_4d_ver_log,tab:res_3d_ver_log,tab:res_2d_ver_log,tab:res_1d_vert_log}.
\begin{figure}[H]
    \centering
    \subfloat[Numerical rank growth of $4$D vertex-sharing]{\includegraphics[height=3cm, width=5.5cm]{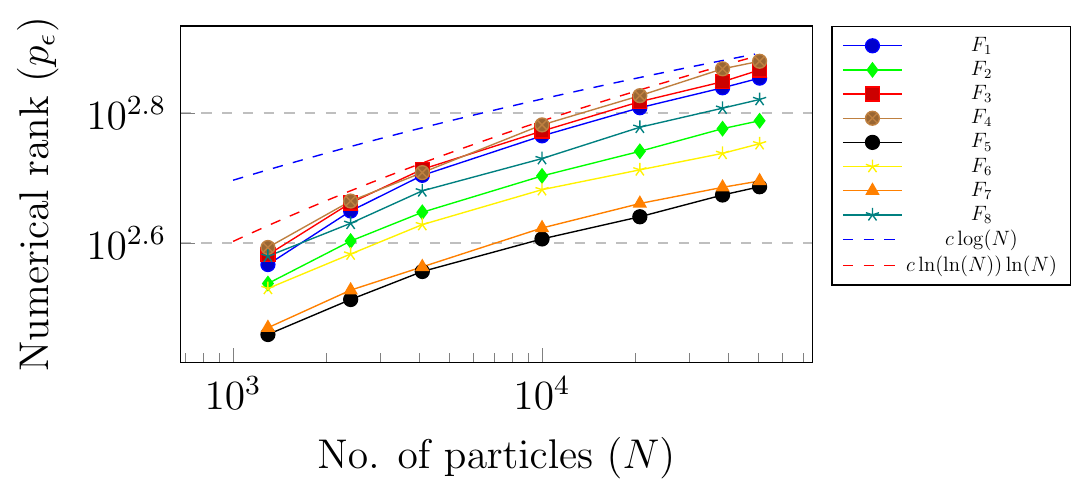}\label{fig:res_4d_ver_log}}\qquad \qquad%
    \subfloat[Numerical rank growth of $3$D vertex-sharing]{\includegraphics[height=3cm, width=5.5cm]{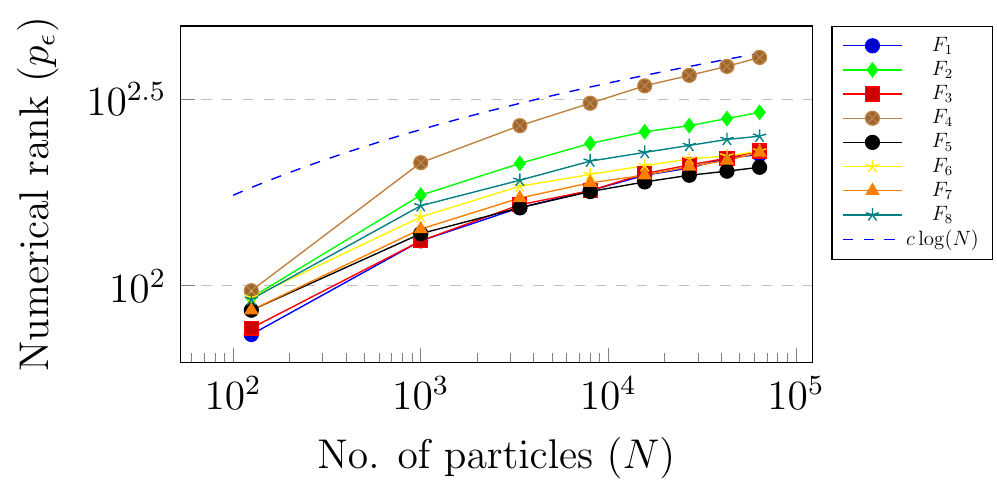}\label{fig:res_3d_ver_log}}\qquad \qquad%
    \subfloat[Numerical rank growth of $2$D vertex-sharing]{\includegraphics[height=3cm, width=5.5cm]{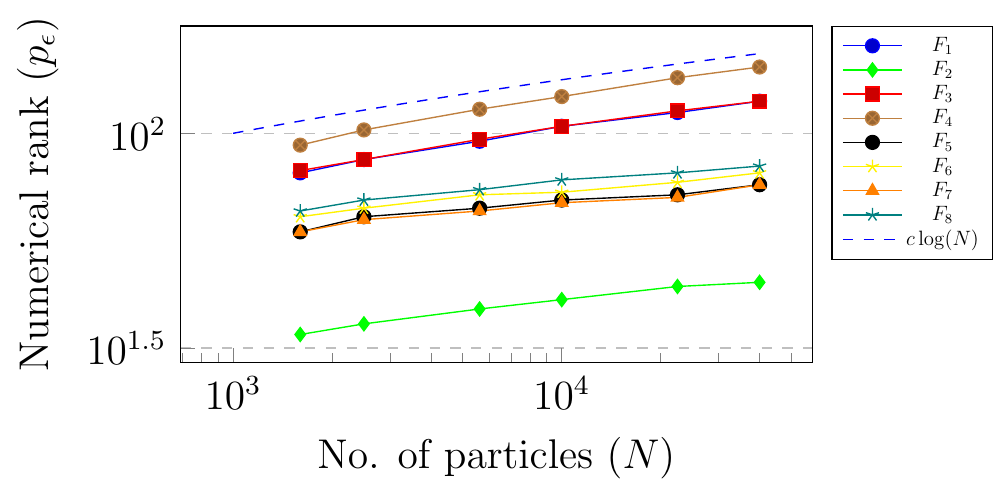}\label{fig:res_2d_ver_log}}\qquad \qquad%
    \subfloat[Numerical rank growth of $1$D vertex-sharing]{\includegraphics[height=3cm, width=5.5cm]{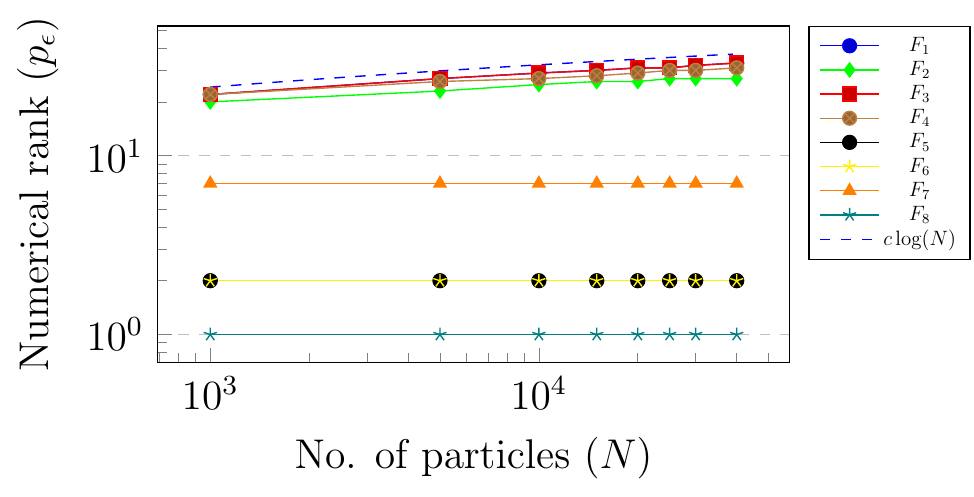}\label{fig:res_1d_ver_log}}\qquad %
    \caption{Numerical rank growth with $N$ in different dimension for the vertex-sharing interaction}
    \label{fig:vertex_rank}
\end{figure}

    \subsection{\textit{Rank growth of different hyper-surface interaction}} In this sub-section, we prove our other theorem on the rank growth of \textbf{all} the near fields (other than vertex-sharing) interaction.
    \begin{figure}[H]
    \centering
    \subfloat[Edge sharing squares]{\label{2d_ed}\resizebox{4cm}{!}{
                    \begin{tikzpicture}[scale=1.2]
         				\draw (-1,-1) rectangle (1,1);
         				\draw (1,-1) rectangle (3,1);
         				\node at (0,0) {$X$};
         				\node at (2,0) {$Y$};
         				\draw [<->] (-1,-1.25) -- (1,-1.25);
         				\node at (0,-1.5) {$r$};
         				\draw [<->] (1,-1.25) -- (3,-1.25);
         				\node at (2,-1.5) {$r$};
         				\draw [<->] (-1.25,-1) -- (-1.25,1);
         				\node[anchor=east] at (-1.25,0) {$r$};
         			\end{tikzpicture}
        }}
    \qquad
    \subfloat[subdivision at level 1]
    {\resizebox{4cm}{!}{
	        \label{2d_edge_divide_l1} 
	        \begin{tikzpicture}[scale=1.2]
         				\draw (-1,-1) rectangle (1,1);
         				\draw (1,-1) rectangle (3,1);
         				
         				\draw (1,-1) grid (3,1);
         				\node at (2.5,-0.5) {$Y_{1,1}$};
         				\node at (2.5,0.5) {$Y_{1,2}$};
         				\node at (1.5,0) {$Y_{2}$};
         				
         				   
         				


 
         				\node at (0,0) {\small $X$};
         				\draw [<->] (-1,-1.25) -- (1,-1.25);
         				\node at (0,-1.5) {$r$};
         				\draw [<->] (1,-1.25) -- (3,-1.25);
         				\node at (2,-1.5) {$r$};
         				\draw [<->] (-1.25,-1) -- (-1.25,1);
         				\node[anchor=east] at (-1.25,0) {$r$};
            \end{tikzpicture}
        }
    }
        \qquad 
    \subfloat[Hierarchical subdivision]
    {\resizebox{4cm}{!}{
	        \label{2d_edge_divide} 
	        \begin{tikzpicture}[scale=1.2]
         				\draw (-1,-1) rectangle (1,1);
         				\draw (1,-1) rectangle (3,1);
         				
         				\draw (1,-1) grid (3,1);
         				\node at (2.5,-0.5) {$Y_{1,1}$};
         				\node at (2.5,0.5) {$Y_{1,2}$};
         				
         				\foreach \i in {0,1,2,3}
         				    \draw (1.5,-1+0.5*\i) rectangle (2,-0.5+0.5*\i);
         				   
         				\node at (1.75,-0.75) {\small $Y_{2,1}$};
         				\node at (1.75,-0.25) {\small $Y_{2,2}$};
         				\node at (1.75,0.25) {\small $Y_{2,3}$};
         				\node at (1.75,0.75) {\small $Y_{2,4}$};
         				
         				\foreach \i in {0,1,2,...,7}
         				    \draw (1.25,-1+0.25*\i) rectangle (1.5,-0.75+0.25*\i);

                        \foreach \i in {0,1,2,...,15}
         				    \draw (1.125,-1+0.125*\i) rectangle (1.25,-0.875+0.125*\i);

                        \foreach \i in {0,1,2,...,31}
         				    \draw (1.0625,-1+0.0625*\i) rectangle (1.125,-0.9375+0.0625*\i);
 
         				\node at (0,0) {\small $X$};
         				\draw [<->] (-1,-1.25) -- (1,-1.25);
         				\node at (0,-1.5) {$r$};
         				\draw [<->] (1,-1.25) -- (3,-1.25);
         				\node at (2,-1.5) {$r$};
         				\draw [<->] (-1.25,-1) -- (-1.25,1);
         				\node[anchor=east] at (-1.25,0) {$r$};
            \end{tikzpicture}
        }
    }
    \caption{Edge sharing boxes and its subdivision in $2$D, $d=2, d'=1$}
    \label{2d_edge}
\end{figure}
    \begin{figure}[H]
    \centering
    \subfloat[at level $1$]{\resizebox{5cm}{!}{
    \tdplotsetmaincoords{75}{215}
    \begin{tikzpicture}[tdplot_main_coords]
	\cubex{0}{0}{0}{2}{line width=0.5mm,black}{};
	\foreach \i in {0,1}
		\cubex{0}{1*\i}{0}{1}{line width=0.25mm,black!40}{};
	\foreach \i in {0,1}
		\cubex{0}{1*\i}{1}{1}{line width=0.25mm,black!40}{};
	\cubex{1}{0}{1}{1}{line width=0.25mm,black!40}{};
	\cubex{1}{0}{0}{1}{line width=0.25mm,black!40}{};
    \cubex{2}{2}{0}{2}{line width=0.5mm,red}{$X$};
    \end{tikzpicture}
    }}
    \caption{Edge sharing cubes and its subdivision in $3$D, $d=3,d'=1$}
    \label{3d_edg}
\end{figure}
\begin{figure}[H]
    \centering
    \subfloat[at level $1$]{\resizebox{4cm}{!}{
    \tdplotsetmaincoords{45}{145}
    \begin{tikzpicture}[tdplot_main_coords]
	\cubex{0}{0}{0}{2}{line width=0.5mm,black};

    \cubex{0}{0}{0}{1}{line width=0.25mm,black!60}{};
    \cubex{0}{1}{0}{1}{line width=0.25mm,black!60}{};
    \cubex{0}{1}{1}{1}{line width=0.25mm,black!60}{};
    \cubex{0}{0}{1}{1}{line width=0.25mm,black!60}{};


    \cubex{2}{0}{0}{2}{line width=0.5mm,red}{$X$};
    
    \end{tikzpicture}
        }}
        \qquad \qquad \qquad \qquad
    \subfloat[at level $2$]
    {\resizebox{4cm}{!}{
	 \tdplotsetmaincoords{45}{145}
    \begin{tikzpicture}[tdplot_main_coords]
	\cubex{0}{0}{0}{2}{line width=0.5mm,black};

    \cubex{0}{0}{0}{1}{line width=0.25mm,black!60}{};
    \cubex{0}{1}{0}{1}{line width=0.25mm,black!60}{};
    \cubex{0}{1}{1}{1}{line width=0.25mm,black!60}{};
    \cubex{0}{0}{1}{1}{line width=0.25mm,black!60}{};

	\foreach \i in {0,1,2,3}
		\cubex{1}{0.5*\i}{0}{0.5}{line width=0.15mm,black!40}{};
	\foreach \i in {0,1,2,3}
		\cubex{1}{0.5*\i}{1}{0.5}{line width=0.15mm,black!40}{};
	\foreach \i in {0,1,2,3}
		\cubex{1}{0.5*\i}{0.5}{0.5}{line width=0.15mm,black!40}{};
	\foreach \i in {0,1,2,3}
		\cubex{1}{0.5*\i}{1.5}{0.5}{line width=0.15mm,black!40}{};

    \cubex{2}{0}{0}{2}{line width=0.5mm,red}{$X$};
    
    \end{tikzpicture}
        }
    }
    \caption{face-sharing cubes and its subdivision in $3$D, $d=3,d'=2$}
    \label{3d_fac}
\end{figure}  
    \begin{theorem} \label{hyper_th}
           Let $X$ and $Y$ be two hyper-cubes that share a hyper-surface of dim $d'$ and $K$ be the corresponding interaction matrix, then for a given
            $\delta >0$, there exists a matrix $\Tilde{K}$ with rank $p_{\delta}$ such that $\dfrac{\magn{K  - \Tilde{K} }_{max}}{\magn{K}_{max}} < \delta$, where $p_\delta\in \mathcal{O}\bkt{N^{d'/d} \bkt{\log^{d-d'}\bkt{\frac{N}{\delta}}}}$
    \end{theorem}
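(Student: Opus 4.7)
The plan is to parallel the proof of \cref{ver_th}, replacing the adaptive $2^d$-tree around a corner with one whose deepest chain hugs the shared $d'$-hyper-surface. I would set $Y=[0,r]^d$ and $X=[-r,0]^{d-d'}\times[0,r]^{d'}$ so that the shared $d'$-hyper-surface is $\{0\}^{d-d'}\times[0,r]^{d'}$, and define the level-$k$ near region as the slab $N_k=[0,r/2^k]^{d-d'}\times[0,r]^{d'}$ of perpendicular thickness $r/2^k$. The newly-admissible sub-cubes at level $k$ are the sub-cubes of side $r/2^k$ that tile $N_{k-1}\setminus N_k$; a direct count yields $2^{kd'}\bkt{2^{d-d'}-1}$ such cubes, each at perpendicular distance at least $r/2^k$ from $X$. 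I would halt the recursion at level $\kappa\sim\log_{2}(N)/d$, when $N_{\kappa+1}$ is one particle thick in each perpendicular direction and therefore contains only $\mathcal{O}\bkt{N^{d'/d}}$ particles, contributing at most $\mathcal{O}\bkt{N^{d'/d}}$ to the rank of $K$ directly.

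For each admissible cube $Y_{k,j}$, the analytic continuation assumption, together with the scale-invariant geometric ratio $\bkt{\text{perpendicular gap to }X}/\bkt{\text{cube side}}\geq 1$, would furnish a generalized Bernstein ellipse $\mathcal{B}\bkt{Y_{k,j},\rho'}$ whose smallest parameter $\rho=\min_l\rho_l$ is bounded away from $1$ uniformly in $k$. Invoking \cref{eq1} on each admissible cube and distributing the error budget $\delta_1=\delta\magn{K}_{max}/\bkt{\text{total \# admissible cubes}}$ would give the per-cube Chebyshev order $p_{k,j}=\mathcal{O}\bkt{\log\bkt{N^{d'/d}/\delta}}=\mathcal{O}\bkt{\log\bkt{N/\delta}}$.

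Summing the admissible count, $\sum_{k=1}^{\kappa}2^{kd'}\bkt{2^{d-d'}-1}=\Theta\bkt{2^{\kappa d'}}=\Theta\bkt{N^{d'/d}}$ (a geometric series dominated by its last term when $d'\geq 1$), and multiplying by the per-cube tensor Chebyshev rank $(p+1)^d$, would produce the naive bound $\mathcal{O}\bkt{N^{d'/d}\log^d\bkt{N/\delta}}$. To sharpen the $\log^d$ factor to the claimed $\log^{d-d'}$, I would group the admissible cubes at each level into along-direction slabs: within such a slab, the $d'$ along coordinates simply re-use the existing particle positions and need no independent Chebyshev modes, so only $(p+1)^{d-d'}$ tensor Chebyshev nodes per slab are required in the $d-d'$ perpendicular directions. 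Combining this $(p+1)^{d-d'}$ per-slab rank with the $\Theta\bkt{N^{d'/d}}$ total slab count then yields $p_\delta=\mathcal{O}\bkt{N^{d'/d}\log^{d-d'}\bkt{N/\delta}}$.

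The hard part will be rigorously justifying this slab-decomposition step, i.e., re-expressing the tensor-product Chebyshev interpolant on each admissible cube as an expansion that interpolates only in the $d-d'$ perpendicular directions while the $d'$ along factors are supplied ``for free'' by the underlying particle distribution. Without this observation, the straightforward analogue of the proof of \cref{ver_th} yields only $\log^d$, so isolating this extra structure is where the improvement over the vertex-sharing case lives.
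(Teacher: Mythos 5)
Your proposal follows essentially the same route as the paper's proof of \cref{hyper_th}: the same graded $2^d$-tree refinement toward the shared $d'$-hyper-surface, the same count of $\bkt{2^{d-d'}-1}\bkt{2^{d'}}^k$ newly admissible cubes at level $k$ (summing geometrically to $\Theta\bkt{N^{d'/d}}$), the same terminal block $Y_{\kappa+1}$ carrying $N^{d'/d}$ particles and hence rank $N^{d'/d}$, the same uniform-$\rho$ Chebyshev/Bernstein estimate via \cref{eq1}, and the same error budget $\delta_1\sim\delta/N^{d'/d}$ giving $p=\mathcal{O}\bkt{\log\bkt{N/\delta}}$ per block. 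Two remarks. First, a bookkeeping slip: what you call the ``total slab count'' $\Theta\bkt{N^{d'/d}}$ is really the total \emph{cube} count; there are only $2^{d-d'}-1$ of your slabs per level, so the arithmetic that closes the argument is (number of admissible cubes) times (per-cube rank $(1+p)^{d-d'}$), exactly as in \cref{eq5} of the paper. Second, the step you flag as the hard part --- showing that interpolating only in the $d-d'$ perpendicular directions yields a genuinely separable approximation of rank $(1+p)^{d-d'}$ per cube --- is precisely the step the paper dispatches in a single sentence (``interpolating the function only along the direction of non-smoothness''), asserting rather than deriving the per-cube rank bound; your instinct that this is where the content lies is sound, since if one instead charges each cube the $(1+p)^{d-d'}$ times the number of along-positions it contains (which is what the naive semi-interpolant literally costs as a matrix factorization), the level sums acquire an extra $\log N$ factor. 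So your reconstruction matches the paper's proof in structure and in its one soft spot.
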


    \begin{proof}
            Consider two identical hyper-cubes $Y = [0,r]^{d}$ and $X = [-r,0]^{d-d'} \times [0,r]^{d'}$ of dim \(d\) which share a hyper-surface of dim \(d'\). In contrast, if $d=3$ then $d' = 1$ implies \emph{edge-sharing blocks} (edge of a cube is a line segment which has \emph{dim 1}) and $d' = 2$ implies \emph{face-sharing blocks} (face of a cube is a square plane which has \emph{dim 2}). In the previous sub-section, we have discussed the case $d' = 0,  i.e., \emph{ the vertex-sharing case}.$ Now, here we discuss the cases $1 \leq d' \leq d-1$. We subdivide the hyper-cube \(Y\) hierarchically using an adaptive $2^d$ tree as shown in~\cref{2d_edge,3d_edg,3d_fac}. \href{https://sites.google.com/view/dom3d/}{\textbf{The link here provides a better $3$D view}}. At level $1$, we have $\bkt{2^d - 2^{d'}}$ finer hyper-cubes that do not share the \(d'\) dimensional hyper-surface with $X$. We recursively subdivide the finer hyper-cubes adjacent to $X$ using the $2^d$ tree. In general, at level $k$, we have $\bkt{2^d-{2^{d'}}}\bkt{2^{d'}}^{k-1}$ finer hyper-cubes that do not share the \(d'\) dimensional hyper-surface with $X$. So, we can write $Y$ as
    \begin{equation}
        Y = \overbrace{Y_2  \bigcup_{j=1}^{\bkt{2^d-{2^{d'}}}} Y_{1,j}}^{\text{At level 1}} = \dots = \overbrace{Y_{\kappa + 1} \bigcup_{k=1}^{\kappa} \bigcup_{j=1}^{\bkt{2^d-{2^{d'}}}\bkt{2^{d'}}^{k-1}} Y_{k,j}}^{\text{At level } \kappa} = \overbrace{Y_{\kappa + 1} \bigcup_{k=1}^{\kappa} \bigcup_{j=1}^{\bkt{2^{d-d'}-1}\bkt{2^{d'}}^k} Y_{k,j}}^{\text{At level } \kappa}
    \end{equation}
            where $Y_{k}$ to be the subdivision of domain $Y$ at level $k$ in the tree and $Y_{k,j}$ be the $j^{th}$ subdivision of $Y_{k}$ $\bkt{Y_{k,j} \text{ is a child of } Y_k}$ with $Y_{a,r} \bigcap Y_{b,s} \neq \emptyset$ iff $a=b$ $\&$ $r=s$. We have $Y = \displaystyle Y_{\kappa + 1} \bigcup_{k=1}^{\kappa} \bigcup_{j=1}^{\bkt{2^{d-d'}-1}\bkt{2^{d'}}^k} Y_{k,j}$, \quad $Y_{\kappa + 1} = Y \setminus \displaystyle \bigcup_{k=1}^{\kappa} Y_{k}$ with $Y_{\kappa +1}$ having $N^{d'/d}$ particles and $\kappa \sim \log_{2^d} \bkt{N}$. Further, $\text{dist}(X,Y_k) >0$ for $k \in \{1,2,\dots, \kappa\}$ and $\text{dist}(X,Y_{\kappa+1}) =0$.
        Let us define the matrix $K_{k} \in \Rb^{T \times N}$ by (MATLAB notaion):
            $$K_{k}(u,v) = \begin{cases}
            K(u,v) & \text{ if }\yb_v \in Y_{k}\\
            0 & \text{ otherwise}
            \end{cases}$$
        where $k \in \{1,2,\hdots,\kappa, \kappa +1 \}$
    and the matrix $K_{k,j} \in \Rb^{T \times N}$ by:
    $$K_{k,j}(u,v) = \begin{cases}
    K(u,v) & \text{ if }\yb_v \in Y_{k,j}\\
    0 & \text{ otherwise}
    \end{cases}$$
    where $k \in \{1,2,\hdots,\kappa \}$ is the level and $j$ is indicative of the $j^{th}$ subdivision at level $k$.
    Then approximation of the matrix $K$ is given by
    \begin{equation}
        \Tilde{K} = \displaystyle \sum_{k=1}^{\kappa} \sum_{j=1}^{\beta \bkt{{2^{d'}}}^k} \Tilde{K}_{k,j} + K_{\kappa+1}
    \end{equation}
    where $\beta = \bkt{2^{d-d'}-1}$.
    We choose a $\bkt{p_{k,j}+1}^{d-d'}$ tensor product grid on Chebyshev nodes in the hyper-cube $Y_{k,j}$ to obtain the polynomial interpolation of $F(\xb,\yb)$ along $\yb$ by interpolating the function only along the direction of \emph{non-smoothness}. Let $M^{(i)}_{k,j} = \displaystyle \sup_{\yb \in \mathcal{B} \bkt{{Y_{k,j},\rho}}} \abs{F_a\bkt{\xb_i,\yb}}$,  $\rho \in (1,\alpha)^d$, for some $\alpha > 1$. Then by~\cref{eq1} the absolute error along the $i^{th}$ row of the matrix $K_{k,j}$ is given by (MATLAB notation)
        \begin{equation}\label{hyp_d}
             \Bigl\lvert \bkt{K_{k,j} \bkt{i,:} -\Tilde{K}_{k,j} \bkt{i,:} } \Bigl\lvert \leq 4 M_{k,j}^{(i)} V_d \frac{\rho^{-p_{k,j}}}{\rho -1} , , \qquad 1 \leq i \leq N, \quad 1 \leq k \leq \kappa, \quad 1 \leq j \leq \beta \bkt{{2^{d'}}}^k
        \end{equation}
Let $M = \max \{ M^{(i)}_{k,j} \} $. Therefore,
        \begin{equation}
            \magn{\bkt{K_{k,j} -\Tilde{K}_{k,j}}}_{max} \leq 4 M V_d \frac{\rho^{-p_{k,j}}}{\rho -1} \implies \dfrac{\magn{\bkt{K_{k,j} -\Tilde{K}_{k,j}}}_{max}}{\magn{K}_{max}} \leq \dfrac{4 M V_d}{\magn{K}_{max}} \frac{\rho^{-p_{k,j}}}{\rho -1} = \frac{c\rho^{-p_{k,j}}}{\rho -1}
        \end{equation}
         where $c = \dfrac{4 M V_d}{\magn{K}_{max}}$. Now choosing $p_{k,j}$ such that the above error is less than $\delta_1$ (for some $\delta_1>0$), we obtain \\
           $ p_{k,j}  = \bkt{1 + \ceil{\frac{ \log \bkt{\frac{c}{\delta_1 (\rho-1)}}}{\log(\rho)}}} \implies \dfrac{\magn{\bkt{K_{k,j} -\Tilde{K}_{k,j}}}_{max}}{\magn{K}_{max}} < \delta_1$ with rank of $\Tilde{K}_{k,j}$ is bounded above by $\bkt{1 + p_{k,j}}^{d-d'}$.
           Let $p_{l,m} = \max \Bigl \{p_{k,j} : k=1,2,\hdots, \kappa \text{ and } j = 1,2,\hdots, \beta \bkt{2^{d'}}^k \Bigl \}$, which corresponds to $\Tilde{K}_{l,m}$.
        Note that the rank of $K_{\kappa +1}$ is $N^{d'/d}$. So, the rank of $\Tilde{K}$ is bounded above by
        \begin{align} \label{eq5}
        \begin{split}
            N^{d'/d} + \mathlarger{\sum_{k=1}^{\kappa} \beta \bkt{{2^{d'}}}^k \bkt{1 + \ceil{\frac{ \log \bkt{\frac{c}{\delta_1 (\rho-1)}}}{\log(\rho)}}}^{d-d'} }
            \\ = N^{d'/d} + \beta \bkt{\frac{2^{d'} \bkt{\bkt{2^{d'}}^\kappa -1}}{\bkt{2^{d'}-1}}}  \bkt{1 + \ceil{\frac{ \log \bkt{\frac{c}{\delta_1 (\rho-1)}}}{\log(\rho)}}}^{d-d'} 
            \\
            \leq N^{d'/d} + \beta \bkt{\frac{2^{d'} \bkt{2^{d'}}^\kappa }{\bkt{2^{d'}-1}}} \bkt{1 + \ceil{\frac{ \log \bkt{\frac{c}{\delta_1 (\rho-1)}}}{\log(\rho)}}}^{d-d'}  \\
            = N^{d'/d} + \beta \dfrac{2^{d'}N^{d'/d}}{\bkt{2^{d'}-1}} \bkt{1 + \ceil{\frac{ \log \bkt{\frac{c}{\delta_1 (\rho-1)}}}{\log(\rho)}}}^{d-d'}  
        \end{split}
        \end{align}
    Therefore, the rank of $\Tilde{K} \in$
    $\mathcal{O}\bkt{N^{d'/d} \bkt{\log\bkt{\frac{1}{\delta_1}}}^{d-d'}}$
    and the absolute error is given by
        \begin{equation}
             \magn{\bkt{K - \Tilde{K}}}_{max} <  \sum_{k=1}^{\kappa} \beta \bkt{{2^{d'}}}^k \delta_1 \magn{K}_{max} = \beta \bkt{\frac{2^{d'} \bkt{\bkt{2^{d'}}^\kappa -1}}{\bkt{2^{d'}-1}}}  \delta_1 \magn{K}_{max}  < \frac{\beta 2^{d'}  N^{d'/d}}{\bkt{2^{d'}-1}} \delta_1 \magn{K}_{max} 
        \end{equation}
        Pick $\delta_1 = \frac{\delta \bkt{2^{d'}-1}}{\beta 2^{d'}  N^{d'/d}}$, then
           $  \dfrac{\magn{K -\Tilde{K}}_{max}}{\magn{K}_{max}} < \delta $ and the rank of $\Tilde{K} \in$ $\mathcal{O}\bkt{N^{d'/d} \bkt{\log^{d-d'} \bkt{\frac{\beta 2^{d'} N^{d'/d}}{\bkt{2^{d'}-1} \delta}}}}$
           
        Hence, the rank of $\Tilde{K}$ scales $\mathcal{O}\bkt{N^{d'/d} \bkt{\log^{d-d'}\bkt{\frac{N}{\delta}}}}$ with $\dfrac{\magn{K -\Tilde{K}}_{max}}{\magn{K}_{max}} < \delta $. 
    \end{proof}
    The numerical rank plots of the different $4$D, $3$D, $2$D hyper-surface-sharing interaction of the eight functions as described in~\Cref{Preliminaries} are shown in~\cref{fig:hypersurface1_rank,fig:hypersurface2_rank,fig:hypersurface3_rank} and tabulated in~\cref{tab:res_4d_h1_log,tab:res_3d_edge_log,tab:res_2d_edge_log,tab:res_4d_h2_log,tab:res_3d_face_log,tab:res_4d_h3_log}.

\begin{figure}[H]
    \centering
    \subfloat[Numerical rank growth of $4$D 1-hyper-surface-sharing $(d=4, d'=1)$]{\includegraphics[height=3cm, width=5.2cm]{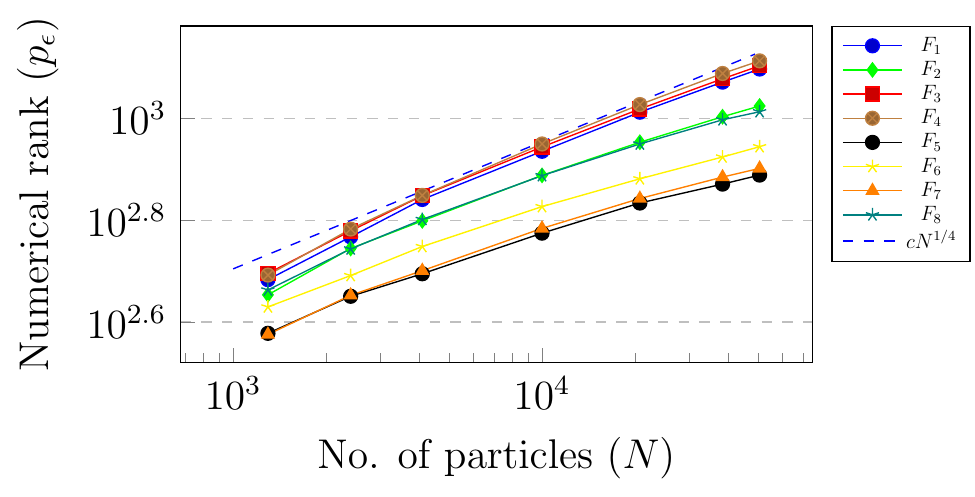}\label{fig:res_4d_h1_log}}\quad%
    \subfloat[Numerical rank growth of $3$D edge-sharing $(d=3, d'=1)$]{\includegraphics[height=3cm, width=5.2cm]{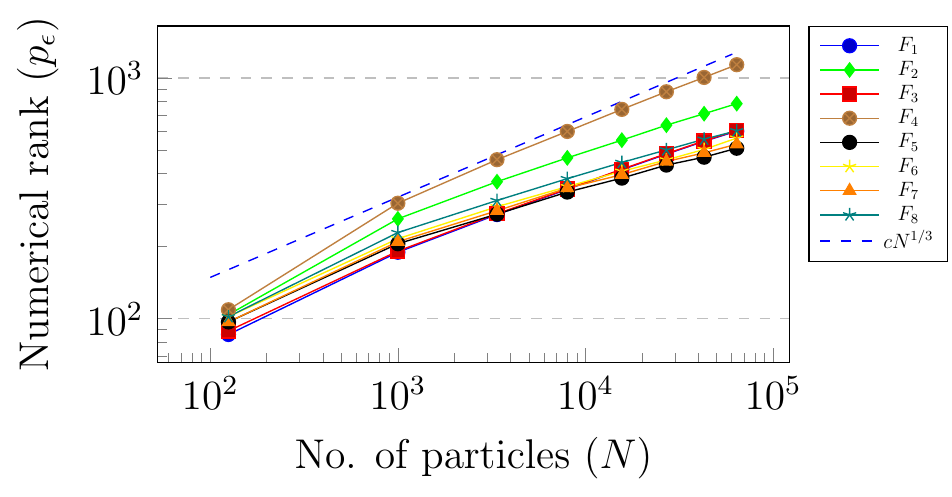}\label{fig:res_3d_edge_log}}\quad%
    \subfloat[Numerical rank growth of $2$D edge-sharing $(d=2, d'=1)$]{\includegraphics[height=3cm, width=5.2cm]{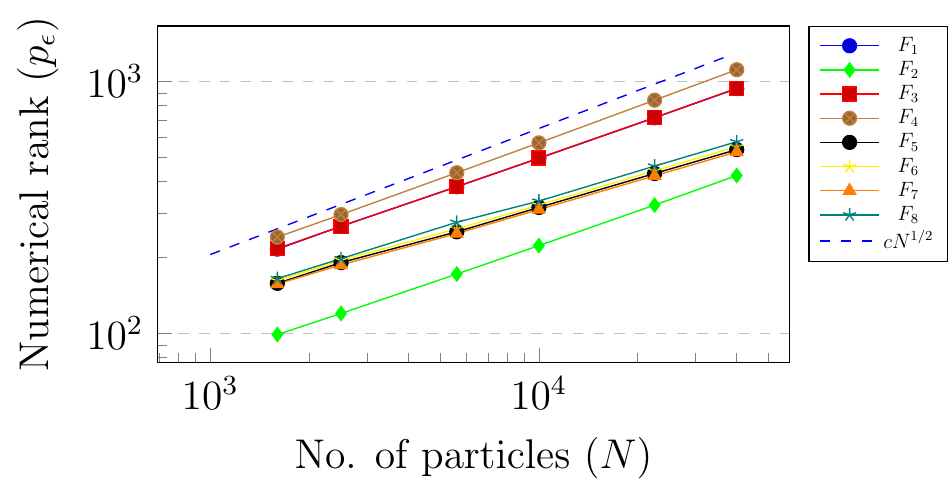}\label{fig:res_2d_edge_log}}\quad%
    \caption{Numerical rank growth with $N$ in different dimension for the  1-hyper-surface-sharing interaction}
    \label{fig:hypersurface1_rank}
\end{figure}

\begin{figure}[H]
    \centering
    \subfloat[Numerical rank growth of $4$D 2-hyper-sharing $(d=4, d'=2)$]{\includegraphics[height=3cm, width=6cm]{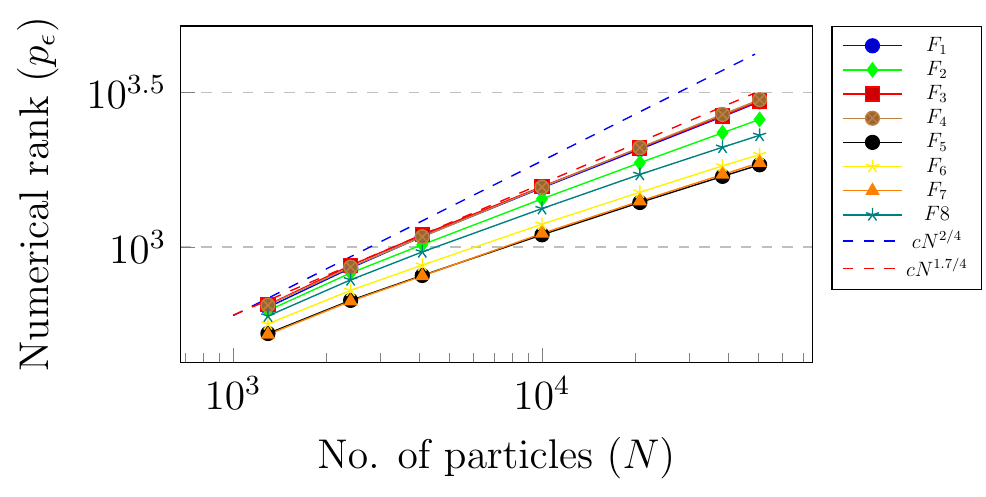}\label{fig:res_4d_h2_log}} \qquad%
    \subfloat[Numerical rank growth of $3$D face-sharing $(d=3, d'=2)$]{\includegraphics[height=3cm, width=6cm]{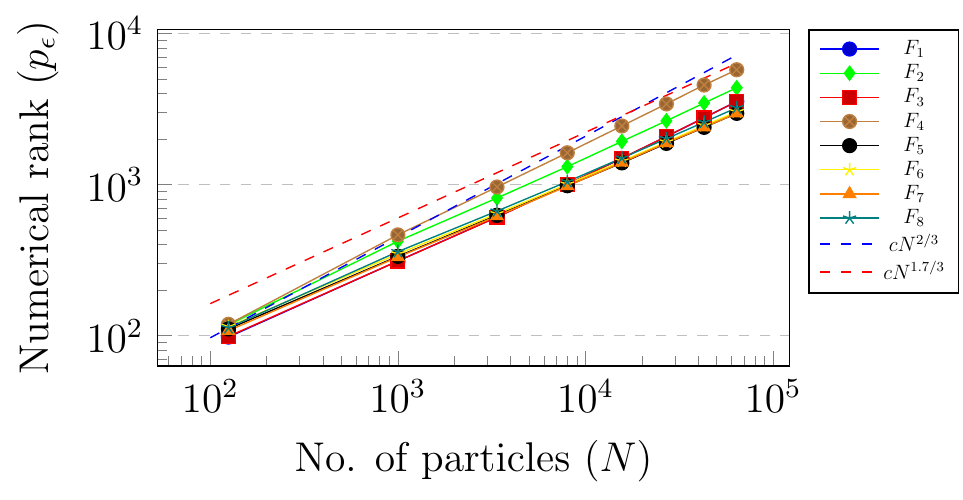}\label{fig:res_3d_face_log}} \qquad%
    \caption{Numerical rank growth with $N$ in different dimension for the 2-hyper-surface-sharing interaction}
    \label{fig:hypersurface2_rank}
\end{figure}

\begin{figure}[H]
    \centering
    \subfloat[Numerical rank growth of $4$D 3-hyper-surface-sharing $(d=4, d'=3)$]{\includegraphics[height=3cm, width=6cm]{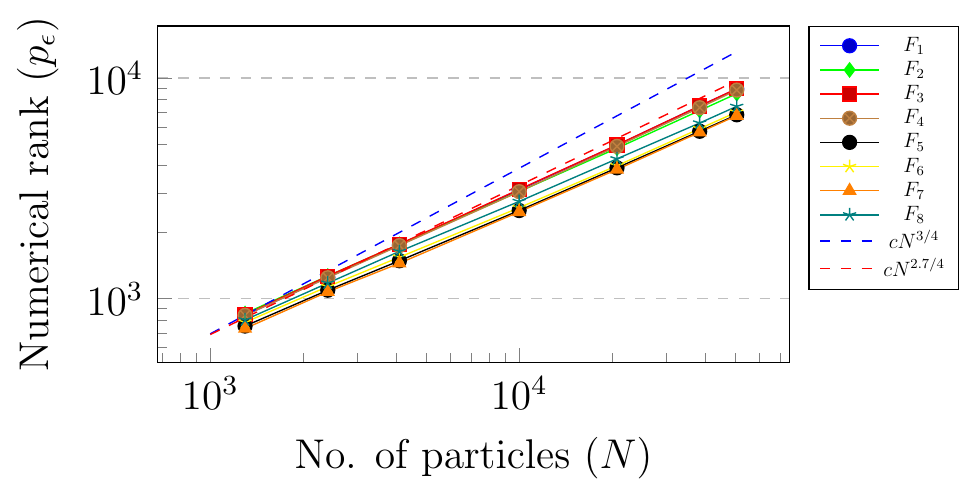}\label{fig:res_4d_h3_log}}\quad%
    \caption{Numerical rank growth with $N$ in different dimension for the  3-hyper-surface-sharing interaction}
    \label{fig:hypersurface3_rank}
\end{figure}

\begin{remark}
    In the proofs, we need a hierarchy of hyper-cubes so that at each level, the interaction is at least one hyper-cube away. This ensures that $\rho$ is constant at all levels.
\end{remark}
\begin{remark}
    One may find a tighter upper bound for the error of specific kernels (therefore, tighter upper bound for the rank) if we have a multipole/eigen function expansion as in~\cite{shortFMM,kandappan2022hodlr2d}. Our theorems are general and applicable to a wide range of kernel functions under reasonably mild assumptions.
\end{remark}
\begin{remark}
    We also perform an optimal curve fitting in \Cref{fig:hypersurface2_rank,fig:hypersurface3_rank}, where the power of $N$ is close to the proposed bound (but for these plots $N \leq 64000$). We also plot the maximum rank obtained from the ACA technique for a larger value of $N$ in \cref{s_rank} (for the $\log(\pmb{r})$ kernel), where the plot follows our theoretical bounds.
\end{remark}
\begin{remark}
    \textbf{The main point we want to highlight is that if we compress sub-matrices corresponding to $d'$-hyper-surface (where $d' \geq 1$) sharing clusters, the rank grows as a positive power of $N$. Hence, in the context of hierarchical matrices with weak admissibility criteria, it is preferable to include only \textbf{\emph{vertex sharing clusters}} and far-field clusters as admissible clusters}.
\end{remark}


 \subsection{Specific results for $1$D, $2$D and $3$D}
 One can easily deduce the following results as shown in~\cref{tab:allD} from our theorems. 
 \begin{table}[H]
     \centering
     \resizebox{\textwidth}{!}{\begin{tabular}{|c|c|c|c|}
     \hline
          \multirow{2}{*}{Interaction type} & \multicolumn{3}{|c|}{Rank growth} \\
     \cline{2-4}
     & $1$D $(d=1)$ & $2$D $(d=2)$ & $3$D $(d=3)$\\
     \hline
          Far-field & $\mathcal{O}\bkt{\log\bkt{1/\delta}}$ & $\mathcal{O}\bkt{\log^2\bkt{1/\delta}}$ & $\mathcal{O}\bkt{\log^3\bkt{1/\delta}}$\\
     \hline
          Vertex-sharing $(d'=0)$ & $\mathcal{O}\bkt{\log_2\bkt{N}\log\bkt{\log_2\bkt{N/\delta}}}$ & $\mathcal{O}\bkt{\log_4\bkt{N}\log\bkt{\log_4\bkt{N/\delta}}}$ & $\mathcal{O}\bkt{\log_8\bkt{N}\log^3\bkt{\log_8\bkt{N/\delta}}}$\\
     \hline
        Edge-sharing $(d'=1)$ &
        NA &
        $\mathcal{O}\bkt{N^{1/2}\log\bkt{N/\delta}}$ & $\mathcal{O}\bkt{N^{1/3}\log^2\bkt{N/\delta}}$\\
    \hline
        Face-sharing $(d'=2)$ &
        NA & 
        NA & 
        $\mathcal{O}\bkt{N^{2/3}\log\bkt{N/\delta}}$\\
    \hline
     \end{tabular}}
     \caption{Rank growth of different interactions in $1$D, $2$D and $3$D}
     \label{tab:allD}
 \end{table}
\section{HODLR$d$D: Weak admissibility condition in higher dimensions}\label{weak_admis} In this section, we discuss a possible way to extend the notion of weak admissibility to higher dimensions and also describe our HODLR$d$D algorithm for particle simulations in $d$ dimensions.
\subsection{Weak admissibility condition in higher dimensions} \label{dweak} Hackbusch et al. \cite{hackbusch2004hierarchical} first introduced the notion of weak admissibility in one dimension, where they compress the sub-matrices corresponding to the adjacent intervals. Equivalently in terms of matrix setting, this represents all the off-diagonal sub-matrices as low-rank. However, if we extend this notion of weak admissibility (i.e., compressing sub-matrices corresponding to adjacent clusters) to higher dimensions $(d>1)$ and compress \emph{\textbf{all}} off-diagonal sub-matrices, then it will not result in an almost linear complexity matrix-vector product algorithm. This is because, except for the vertex-sharing interaction the rank of all other adjacent interactions grows with some positive power of $N$. In \Cref{num_results}, we develop the HODLR algorithm in $4$D based on their weak admissibility condition (i.e., except  for the self-interaction, we compressed all other interactions) and its complexity is roughly $\mathcal{O} \bkt{N^{7/4} \log{N}}$. Hence, it is important to note that the notion of weak admissibility introduced by Hackbusch et al. \cite{hackbusch2004hierarchical} is fine for one dimensional problems but needs to be revised for higher-dimensional problems. As part of this article, in this section, we propose a possible way to extend the notion of weak admissibility condition to higher dimensions. Our theorems show that the rank of sub-matrices corresponding to not just the far-field interactions but also vertex-sharing interactions do not scale with any power of $N$. Therefore, treating the clusters corresponding to far-field and \textbf{vertex-sharing interactions} as admissible clusters could be considered the right way to extend the notion of \emph{weak admissibility} for higher dimensions.
\subsection{Construction of the tree to build $d$ dimensional hierarchical matrix} We take a $d$ dimensional hyper-cube $C \subset \Rb^d$ as the computational domain, that contains the particles. We divide the hyper-cube $C$ using the $2^d$ tree. At level 0 (root level) of the tree is the hyper-cube $C$ itself. A hyper-cube at level $k$ is subdivided into $2^d$ finer hyper-cubes that belong to the level $k+1$ of the tree. The former is called the parent of the latter, and the latter are the children of the former. We stop the sub-division at a level $\kappa$ of the tree, when each finest hyper-cube contains at most $N_{max}$ particles. $N_{max}$ is a user-specified threshold that defines the maximum number of particles at a finest hyper-cube.  The $2^{d \kappa}$ finest hyper-cubes at level $\kappa$ are called the leaves. In this article, we consider the balanced tree for simplicity. An adaptive $2^d$ tree or K-d tree can be considered too. Let $N$ be the total number of particles inside the hyper-cube $C$ then $N \leq N_{max} 2^{d \kappa} \implies \kappa =  \ceil{\log_{2^d} \bkt{N/N_{max}}}$

\subsection{HODLR$d$D, a black-box fast algorithm for matrix-vector product} \label{HODLR$d$D_algo}
 We develop our \\ HODLR$d$D algorithm based on the weak admissibility condition (admissibility of far-field and vertex-sharing interactions) as mentioned in \Cref{dweak}. The complexity of our HODLR$d$D algorithm is $\mathcal{O}\bkt{pN \log \bkt{N}}$ and our theorems guarantee that $p \in \mathcal{O} \bkt{\log \bkt{N} \log^d \bkt{\log \bkt{N}}}$, i.e., it is an almost linear complexity algorithm for matrix-vector product in any dimension $d$. The article~\cite{kandappan2022hodlr2d} by our group discusses the rank bounds of $2$D Green's function and the HODLR2D algorithm. But in this article, rank bounds obtained from our theorems are applicable for a wide range of kernel functions in any dimension $d$. In contrast, we have generalized the algorithm in any dimension $d$ based on our theorems of rank bounds.
This section introduces a few notations (\cref{tab:HOD_notation}) to explain our HODLR$d$D algorithm.
 \begin{table}[H]
     \centering
     \resizebox{\textwidth}{!}{\begin{tabular}{|c|c|}
     \hline
        $\mathcal{C}_i^{\bkt{l}}$ &  Cluster of particles inside the hyper-cube $i$ at level $l$ of the $2^d$ tree (we subdivide the hyper-cube $C$ using $2^d$ tree) \\
    \hline
        $\mathcal{F}_{\mathcal{C}_i^{\bkt{l}}}$  &  Set of hyper-cubes that are in far-field of $\mathcal{C}_i^{\bkt{l}}$ i.e., $\{\mathcal{C}_{j}^{(l)}: \text{hyper-cubes $i$ and $j$ are at least one hyper-cube away}\}$  \\
    \hline
        $\mathcal{HS}_{\mathcal{C}_i^{\bkt{l}},d'}$  &  \makecell{Set of hyper-cubes that share $d'$ hyper-surface with $\mathcal{C}_i^{\bkt{l}}$ i.e., $\{\mathcal{C}_{j}^{(l)}: \text{hyper-cubes $i$ and $j$ share $d'$ hyper-surface}\}$ \\ e.g., $\mathcal{HS}_{\mathcal{C}_i^{\bkt{l}},0}$ is the set of hyper-cubes that share a vertex with $\mathcal{C}_i^{\bkt{l}}$}  \\
    \hline
        $\text{child} \bkt{\mathcal{C}_i^{\bkt{l}}}$  &  $\{\mathcal{C}_{j}^{(l+1)}: \text{hyper-cubes $j$ is a child of coarser hyper-cube $i$} \}$  \\
    \hline
    $\text{parent} \bkt{\mathcal{C}_i^{\bkt{l}}}$  &  $\{\mathcal{C}_{j}^{(l-1)}, \text{hyper-cubes $i$ is a child of coarser hyper-cube $j$} \} $  \\
    \hline
    $\text{siblings} \bkt{\mathcal{C}_i^{\bkt{l}}}$  &  $\text{child}\bkt{\text{parent} \bkt{\mathcal{C}_i^{\bkt{l}}}} \setminus \mathcal{C}_i^{\bkt{l}} $  \\
    \hline
    $\text{clan} \bkt{\mathcal{C}_i^{\bkt{l}}}$  &  $ \{ \text{siblings} \bkt{\mathcal{C}_i^{\bkt{l}}} \} \bigcup \{ \text{child} \bkt{P} : P \in \bigcup\limits_{d'=0}^{d-1} \mathcal{HS}_{\text{parent} \bkt{\mathcal{C}_i^{\bkt{l}}},d'} \}$ \\
    \hline
    $\mathcal{IL} \bkt{\mathcal{C}_i^{\bkt{l}}}$  & The interaction list of the cluster $\mathcal{C}_i^{\bkt{l}}$ is defined as  $\text{clan} \bkt{\mathcal{C}_i^{\bkt{l}}} \bigcap \bkt{\mathcal{HS}_{\mathcal{C}_i^{\bkt{l}},0} \bigcup \mathcal{F}_{\mathcal{C}_i^{\bkt{l}}}}$ \\
    \hline
     \end{tabular}}
     \caption{Notations for this section (\Cref{HODLR$d$D_algo})}
     \label{tab:HOD_notation}
 \end{table}

\vspace{1cm}

\begin{definition} 
    \textbf{HODLR$d$D admissibility condition}: Two different clusters $\mathcal{C}_i^{\bkt{l}}$ and $\mathcal{C}_j^{\bkt{l}}$ at level $l$ of the $2^d$ tree are admissible iff either they do not share $d'$ hypersurface ($d'>0$) or they share at the most a vertex.
\end{definition}
We illustrate the image of HODLR$d$D matrix in $4$D (i.e. HODLR4D) at level $1$ and level $2$ in \cref{fig:HODLR4D_matrix}.
\begin{figure}[H]
    \centering
    \subfloat[HODLR4D at level 1]{
        \includegraphics[scale=0.5]{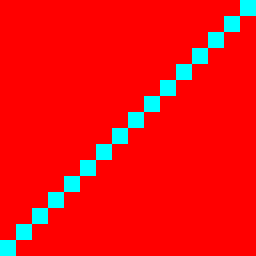}
        }\quad \quad
    \subfloat[HODLR4D at level 2]{
        \includegraphics[scale=0.5]{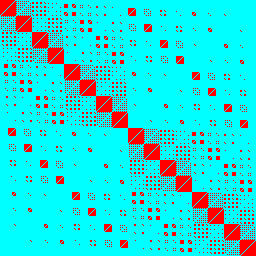}
        }\quad
    \subfloat{
        \begin{tikzpicture}
            [
            box/.style={rectangle,draw=black, minimum size=0.25cm},scale=0.2
            ]
            \node[box,fill=red,,font=\tiny,label=right:Full-rank block,  anchor=west] at (0,6){};
            \node[box,fill=cyan,,font=\tiny,label=right:Low-rank block,  anchor=west] at (0,2){};
        \end{tikzpicture}
    }
    \caption{HODLR4D matrix at level 1 and 2}
    \label{fig:HODLR4D_matrix}
\end{figure}
\begin{figure}[H]
    \centering
    \subfloat[$\mathcal{H}$ matrix at level 1]{
        \includegraphics[scale=0.5]{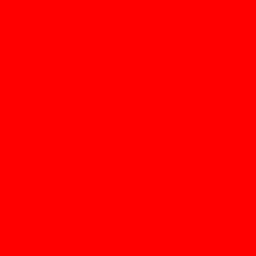}
        }\quad \quad
    \subfloat[$\mathcal{H}$ matrix at level 2]{
        \includegraphics[scale=0.5]{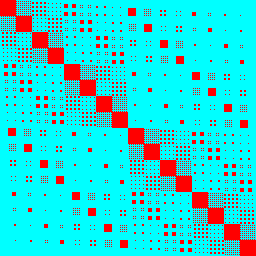}
        }\quad
    \subfloat{
        \begin{tikzpicture}
            [
            box/.style={rectangle,draw=black, minimum size=0.25cm},scale=0.2
            ]
            \node[box,fill=red,,font=\tiny,label=right:Full-rank block,  anchor=west] at (0,6){};
            \node[box,fill=cyan,,font=\tiny,label=right:Low-rank block,  anchor=west] at (0,2){};
        \end{tikzpicture}
    }
    \caption{$\mathcal{H}$ matrix with standard/strong admissibility at level 1 and 2 in $4$D}
    \label{fig:H_matrix}
\end{figure}
\begin{figure}[H]
    \centering
    \subfloat[HODLR in $4$D at level 1]{
        \includegraphics[scale=0.5]{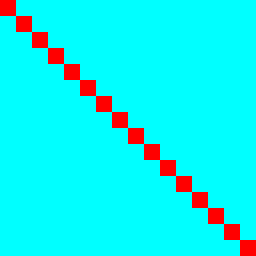}
        }\quad \quad
    \subfloat[HODLR in $4$D at level 2]{
        \includegraphics[scale=0.5]{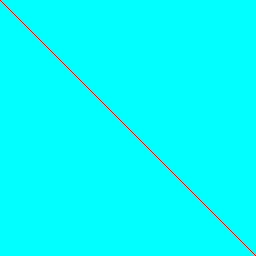}
        }\quad
    \subfloat{
        \begin{tikzpicture}
            [
            box/.style={rectangle,draw=black, minimum size=0.25cm},scale=0.2
            ]
            \node[box,fill=red,,font=\tiny,label=right:Full-rank block,  anchor=west] at (0,6){};
            \node[box,fill=cyan,,font=\tiny,label=right:Low-rank block,  anchor=west] at (0,2){};
        \end{tikzpicture}
    }
    \caption{HODLR in $4$D at level 1 and 2}
    \label{fig:HODLR_matrix}
\end{figure}
We will discuss our HODLR$d$D algorithm in two phases. The first phase is the initialization of HODLR$d$D hierarchical structure, as described in~\cref{alg:initialization}. One of the issues with the higher dimensional problems is the scaling of the number of particles (curse of dimensionality). For example, if we consider $100$ particles along each dimension, then for a $d$ dimensional problem, the number of particles, $N=100^d$. Hence, storing all the low-rank factors of the interaction matrices might exhaust the RAM for large $N$. To address this issue, we store the factors in a memory-efficient way. We rely on algebraic technique for the low-rank compression. For each pair of clusters $\mathcal{C}_i^{\bkt{l}}$ and $\mathcal{C}_j^{\bkt{l}} \in \mathcal{IL} \bkt{\mathcal{C}_i^{\bkt{l}}}$ at all levels of the $2^d$ tree, with index sets $I$ and $J$ respectively, the interaction matrix $K\bkt{I,J}$ is compressed using ACA \cite{bebendorf2003adaptive,tyrtyshnikov2000incomplete} with a tolerance $\epsilon$ as follows.
\begin{align} \label{keq1}
    K\bkt{I,J} \approx K \bkt{I,\tau_{IJ}} K\bkt{\sigma_{IJ},\tau_{IJ}}^{-1}  K\bkt{\sigma_{IJ},J}
\end{align}
where $\sigma_{IJ}$ and $\tau_{IJ}$ are the pivots of ACA. We compute the LU factors of $K\bkt{\sigma_{IJ},\tau_{IJ}}$, $L_{IJ}$ and $U_{IJ}$ such that $K\bkt{\sigma_{IJ},\tau_{IJ}} = L_{IJ} U_{IJ}$. Hence, \cref{keq1} can be re-written as
\begin{align} \label{keq2}
    K\bkt{I,J} \approx K \bkt{I,\tau_{IJ}} U_{IJ}^{-1} L_{IJ}^{-1} K\bkt{\sigma_{IJ},J}
\end{align}
The LU factors of $K\bkt{\sigma_{IJ},\tau_{IJ}}$ can be obtained as a by-product of the ACA routine~\cite{bebendorf2009recompression}. Also, in the ACA routine, we only store the pivots $\sigma_{IJ}, \tau_{IJ}$ and the matrices $L_{IJ}$ and $U_{IJ}$. We do not store the matrices $ K \bkt{I,\tau_{IJ}}$ and $ K\bkt{\sigma_{IJ},J} $.
The second phase is to perform the HODLR$d$D matrix-vector product as described in~\cref{alg:matvec}.
\begin{remark}
    When we apply the inverse of upper triangular (lower triangular) matrices, we perform backward (forward) substitution.
\end{remark}
 \begin{algorithm}[H]
	\caption{HODLR$d$D Initialization}\label{alg:initialization}
	\begin{algorithmic}[1]
		\Procedure{Initialize}{$N_{\max}$,$\epsilon$}
		\State{} \Comment{$N_{\max}$ is the maximum number of particles at leaf level}
		\State{} \Comment{$\epsilon$ is the tolerance for the ACA}
		\State {Form the $2^d$ tree with number of levels $\kappa$, \\ where $\kappa= \min \left\{l : \abs{\mathcal{C}^{(l)}_{i}} < N_{\max};\forall i \in \{0,1,2,\ldots,2^{dl}-1\}\right\}$}
\For{\texttt{$l=1:\kappa$}} 
        \Comment{Low-rank compression of interaction list with memory-efficient ACA}
				\For{\texttt{$i=0:2^{dl}-1$}}
					\State $I\gets \text{Index set of } \mathcal{C}_{i}^{(l)}$
					\For{\texttt{j in $\mathcal{IL}_{\mathcal{C}_{i}^{(l)}}$}}
						\State $J\gets \text{Index set of } \mathcal{C}_{j}^{(l)}$
						\State Perform memory-efficient ACA with tolerance $\epsilon$ on the sub-matrix $K(I,J)  \approx K \bkt{I,\tau_{IJ}} U_{IJ}^{-1} L_{IJ}^{-1} K\bkt{\sigma_{IJ},J}$ 
					\EndFor
				\EndFor
			\EndFor
		\EndProcedure
	\end{algorithmic}
\end{algorithm}
\begin{remark}
    \textbf{Complexity of the \cref{alg:initialization}:} We rely on ACA for the low-rank compression. Also, our theorems guarantee that the rank grows almost linearly for HODLR$d$D algorithm. So, the overall time and memory complexity of the initialization step is also almost linear.
\end{remark}
\begin{remark}
    \textbf{Complexity of the \cref{alg:matvec}:} The complexity of \mvp using the HODLR$d$D representation with $\log \bkt{N}$ levels is $\mathcal{O} \bkt{pN \log \bkt{N}}$. Our theorems guarantee that the rank $p \in \mathcal{O} \bkt{\log \bkt{N} \log^d \bkt{\log \bkt{N}}}$. Therefore, it is an almost linear complexity algorithm for \mvp in $d$ dimensions.
\end{remark}
 \begin{algorithm}[H]
	\caption{HODLR$d$D matrix-vector Product $K\pmb{q} = \pmb{b}$}\label{alg:matvec}
	\begin{algorithmic}[1]
		\Procedure{MatVec}{$\pmb{q}$}
			\For{\texttt{$l=1:\kappa$}} \Comment{Low-rank matrix-vector product}
				\For{\texttt{$i=0:2^{dl}-1$}}
					\State $I\gets \text{Index set of } \mathcal{C}_{i}^{(l)}$
					\For{\texttt{j in $\mathcal{IL}_{\mathcal{C}_{i}^{(l)}}$}}
						\State $J\gets \text{Index set of } \mathcal{C}_{j}^{(l)}$
						\State $\pmb{b}(I) = \pmb{b}(I) + K(I,\tau_{IJ}) U_{IJ}^{-1}L_{IJ}^{-1}  K(\sigma_{IJ},J) \pmb{q}(J)$
					\EndFor
				\EndFor
			\EndFor
   			\For{\texttt{i=0:$2^{d \kappa}-1$}} \Comment{Full-rank/Dense matrix-vector product at leaf level}
				\State $I \gets \text{Index set of } \mathcal{C}_{i}^{(\kappa)}$
				\State Form the dense matrix $K\bkt{I,I}$                   \Comment{Self interaction matrix}
				\State $\pmb{b}(I) = \pmb{b}(I) + K(I,I) \times \pmb{q}(I)$       
				\For{\texttt{j in $\bigcup\limits_{d'=1}^{d-1} \mathcal{HS}_{ \mathcal{C}_i^{\bkt{\kappa}},d'}$}}
					\State $J\gets \text{Index set of } \mathcal{C}_{j}^{(\kappa)}$
					\State Form the dense matrix $K(I,J)$                        \Comment{Other nearby interaction matrices}
				    \State $\pmb{b}(I) = \pmb{b}(I) + K(I,J) \times \pmb{q}(J)$    
				\EndFor
			\EndFor
			\State \textbf{return} $\pmb{b}$
		\EndProcedure
	\end{algorithmic}
\end{algorithm}

The $\texttt{C++}$ implementation with \texttt{OpenMP} parallelization of the HODLR$d$D algorithm is available at \texttt{\url{https://github.com/SAFRAN-LAB/HODLRdD}}. Also, the code works for any value of $d$, i.e., in any dimension $d$ and any user-specified matrix. 
\section{Numerical experiments} \label{num_results}
In this section, we demonstrate the performance of our HODLR$d$D algorithm by comparing it with $\mathcal{H}$ matrix with standard/strong admissibility condition (blocks corresponding to the far-field interactions are compressed) \Cref{fig:H_matrix} and HODLR (where all off-diagonal blocks corresponding to non-self interactions are compressed) \Cref{fig:HODLR_matrix}. In the first experiment, we perform a HODLR$d$D matrix-vector product. In the second experiment, we demonstrate HODLR$d$D accelerated iterative solver (GMRES \cite{saad1986gmres}) to solve a $d$ dimensional integral equation. In the last experiment, we compare the naive kernel SVM with the HODLR$d$D accelerated SVM. We show that one can significantly improve the training period of the SVM algorithm using our HODLR$d$D algorithm. We define some notations in \Cref{tab:app_notations} for \Cref{smvp,ieqs}.
 \begin{table}[H]
     \centering
     \resizebox{\textwidth}{!}{\begin{tabular}{|c|c|}
     \hline
        $N$ &  Total number of particles in the domain or size of the kernel matrix \\
    \hline
        Initialization Time &  \makecell{The time taken (in seconds) by the Initialization routine (\Cref{alg:initialization}) and \\ the time taken to form the dense matrices at leaf level.}  \\
    \hline
        M-V product time  & \makecell{The time taken (in seconds) by the Mat-Vec routine (\Cref{alg:matvec}) \\ excluding the time taken to form the dense matrices at leaf level.}   \\
    \hline
        Total time  & Initialization Time + M-V product time    \\
    \hline
    Maximum rank  &  \makecell{The maximum rank across all the interaction lists, which were compressed using the ACA algorithm \\ while building the hierarchical representation, ACA tolerance $= 10^{-6}$.}  \\
    \hline
    $N_{max}$ & Maximum number of particles at leaf level. We set $N_{max} = 1000$. \\
    \hline
    Memory & The total memory (in GB) needed to store the hierarchical representation. \\
    \hline
    Solution time & \makecell{Total time (in seconds) to solve the system $A \pmb{\sigma} = \pmb{f}$ using GMRES  \\ (\textbf{Stopping criteria:} If the residual is less than $10^{-6}$ GMRES routine will stop) \\ In the GMRES routine, the matrix-vector product is accelerated using different hierarchical matrices}\\
    \hline
     \end{tabular}}
     \caption{Notations for \Cref{smvp} and \Cref{ieqs}}
     \label{tab:app_notations}
 \end{table}

\subsection{HODLR$d$D fast matrix-vector product} \label{smvp} In the first experiment, we demonstrate the scalability of the matrix-vector product through HODLR$d$D $(d=4)$, $\mathcal{H}$ matrix with standard/strong admissibility condition in $4$D and HODLR in $4$D. We consider a uniform distribution of particles inside the domain $C = [-1,1]^4 \subset \Rb^4$. The $(i,j)^{th}$ entry of the kernel matrix is given by              $$K(i,j) = \begin{cases}
            \log \bkt{\magn{\pmb{x}_i - \pmb{x}_j}_2} & \text{ if } i \neq j , \qquad \pmb{x}_i, \pmb{x}_j \in C\\
           \qquad 0 & \text{ otherwise}
            \end{cases}$$
We apply the above matrix to a random column vector and report the initialization time (\Cref{s_init}), matrix-vector product time (\Cref{s_mvp}), total time (\Cref{s_total}), maximum rank (\Cref{s_rank}), relative error (\Cref{s_error}) with $N$ in \Cref{fig:smat_vec}. This experiment was performed on an Intel Xeon Gold 2.5GHz processor without \texttt{OpenMP}.
\begin{figure}[H]
    \centering
    \subfloat[]{\includegraphics[height=3.2cm, width=5cm]{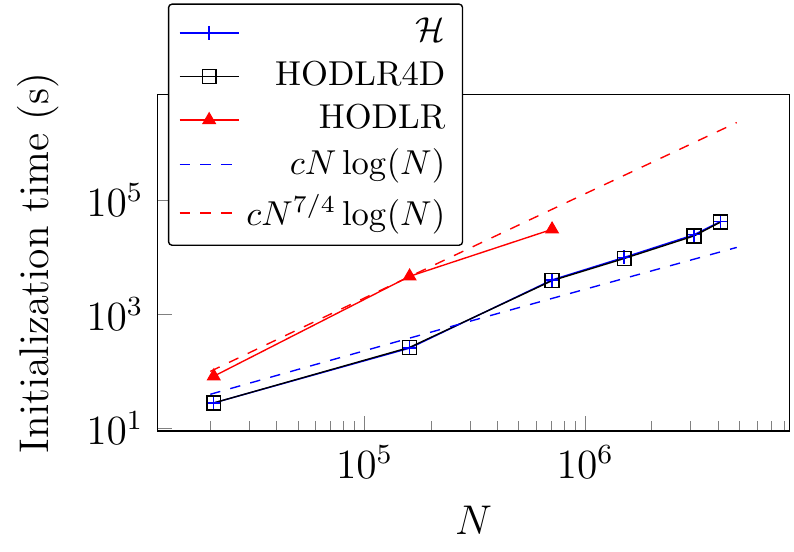}\label{s_init}}\quad%
    \subfloat[]{\includegraphics[height=3.2cm, width=5cm]{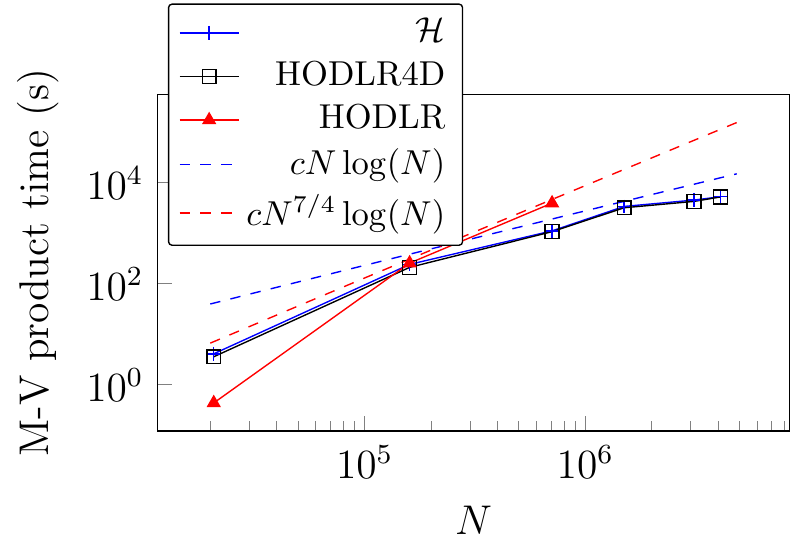}\label{s_mvp}}\quad%
    \subfloat[]{\includegraphics[height=3.2cm, width=5cm]{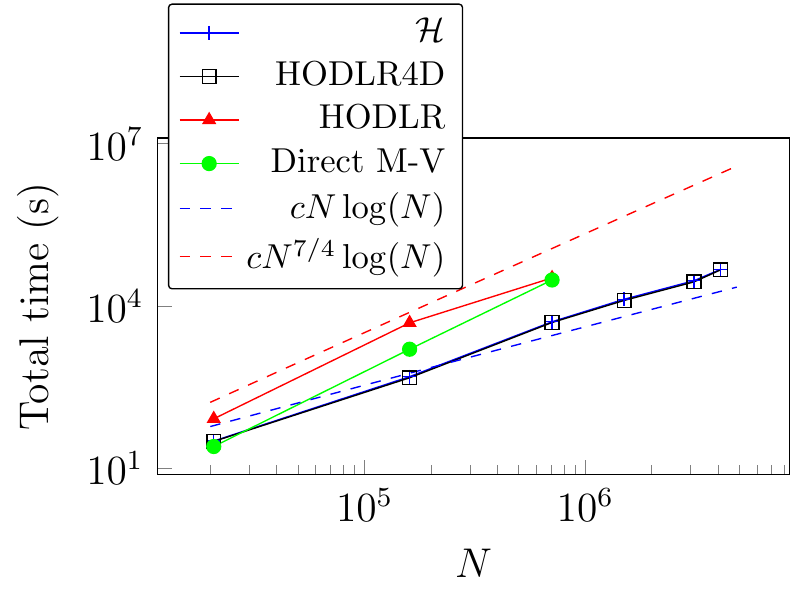}\label{s_total}}\quad%
    \subfloat[]{\includegraphics[height=3.2cm, width=7cm]{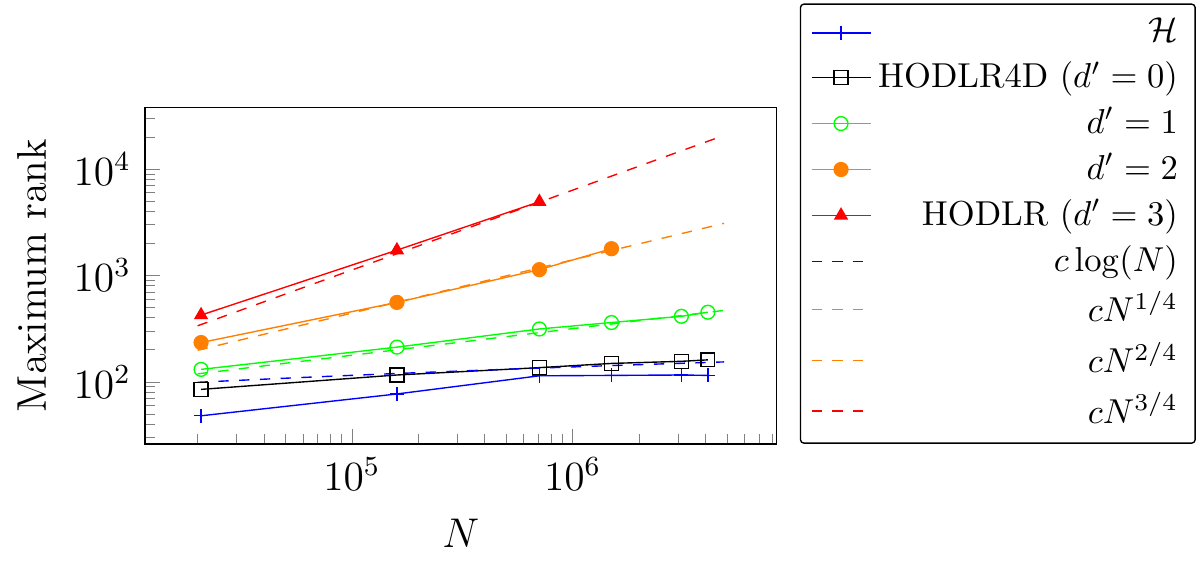}\label{s_rank}}\qquad \qquad%
    \subfloat[]{\includegraphics[height=3.2cm, width=5cm]{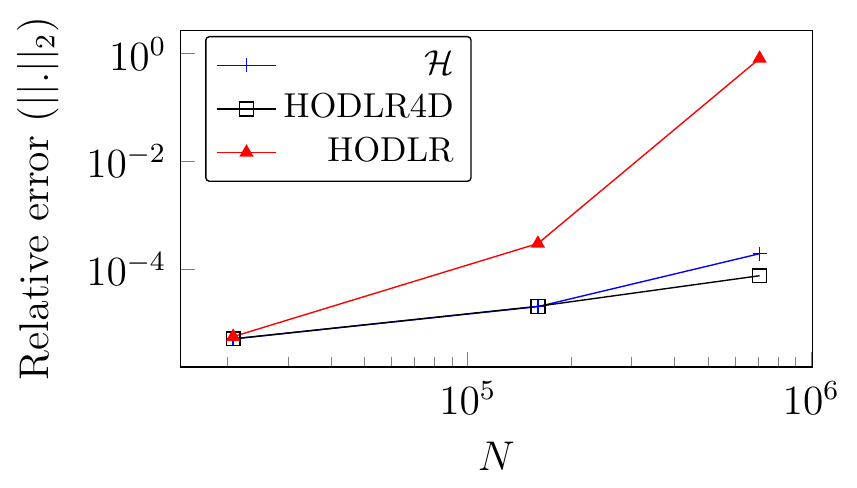}\label{s_error}}\quad%
    \caption{Various benchmarks of the HODLR$d$D $(d=4)$ matrix-vector product in comparison with those of $\mathcal{H}$, HODLR for the kernel $\log \bkt{||\pmb{x}-\pmb{y}||_2}$. In \Cref{s_rank} we compare the \textbf{maximum rank} of all possible hierarchical matrices based on different \textit{weak admissibility} condition, i.e., $d'=0,1,2,3$ and also the $\mathcal{H}$ matrix with \textit{strong admissibility} condition. We set the ACA tolerance $= 10^{-6}$}
    \label{fig:smat_vec}
\end{figure}
\begin{remark}
    For $N>10^6$, the HODLR code fails due to memory issues and also its relative error is very high, whereas HODLR$d$D and $\mathcal{H}$ matrix with \textit{strong admissibility} condition perform better.
\end{remark}
\subsection{HODLR$d$D accelerated iterative solver for integral equations in $d$ dimensions} \label{ieqs} In the second experiment, we consider the Fredholm integral equation of the second kind in $C = [-1,1]^4 \subset \Rb^4$,
\begin{equation} \label{ieq_1}
    \sigma(\pmb{x})+\int_{C} F(\pmb{x},\pmb{y})\sigma(\pmb{y})d\pmb{y} = f(\pmb{x}) \qquad \pmb{x}, \pmb{y} \in C
\end{equation}
with $F(\pmb{x},\pmb{y}) = -\dfrac{1}{4\pi^2 \magn{\pmb{x}-\pmb{y}}_2^{2}}$ ($4$D Green's function for Laplace equation). We follow a piecewise constant collocation method with collocation points on a uniform grid in $4$D to discretize \cref{ieq_1} and obtain a linear system 
\begin{equation} \label{ieq_2}
    K \pmb{\sigma} = \pmb{f}
\end{equation}
We consider a random vector $\pmb{\sigma}$ and calculate the vector $\pmb{f} = K \pmb{\sigma}$ (exact up to roundoff). We take $\pmb{f}$ as the RHS of \cref{ieq_2} and solve the system using an iterative solver, GMRES~\cite{saad1986gmres}, where we accelerate the matrix-vector product in each iteration using the HODLR$d$D, HODLR and $\mathcal{H}$ matrix algorithm. We compute the relative error as $\dfrac{\magn{\pmb{\sigma}_c - \pmb{\sigma}}_2}{\magn{\pmb{\sigma}}_2}$, where $\pmb{\sigma}_c$ be the computed $\pmb{\sigma}$ (we obtain $\pmb{\sigma}_c$ from GMRES solver). We illustrate the comparison between these three algorithms in \Cref{fig:GMRES}. This experiment was performed on an Intel Xeon Gold 2.5GHz processor with 8 \texttt{OpenMP} threads.
\begin{figure}[H]
    \centering
    \subfloat[]{\includegraphics[height=3cm, width=5cm]{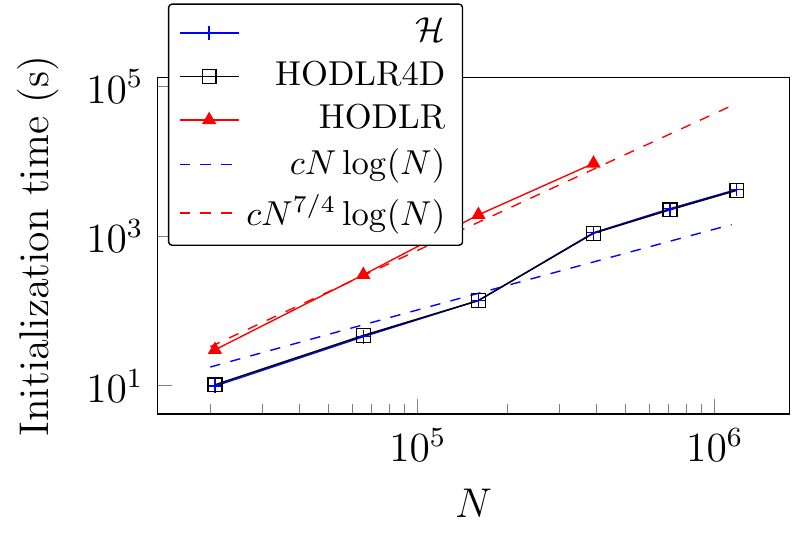}\label{ieq_init}}\quad%
    \subfloat[]{\includegraphics[height=3cm, width=5cm]{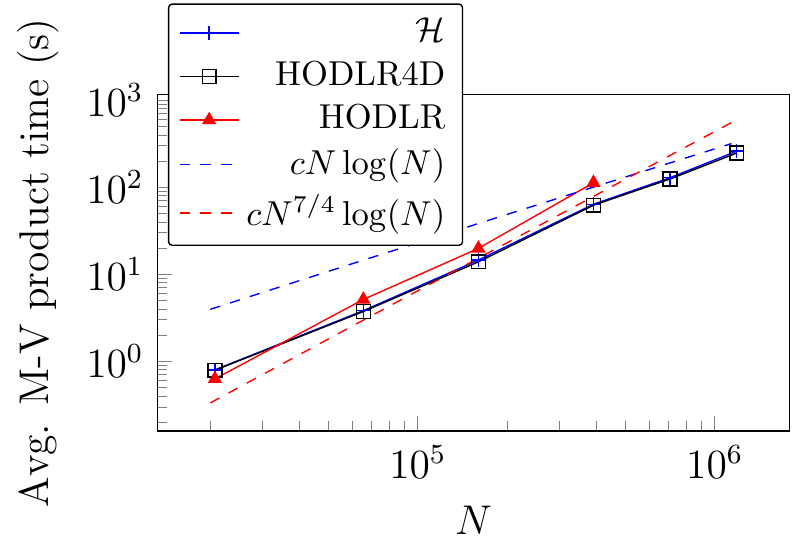}\label{ieq_mvp}}\quad%
    \subfloat[]{\includegraphics[height=3cm, width=4.5cm]{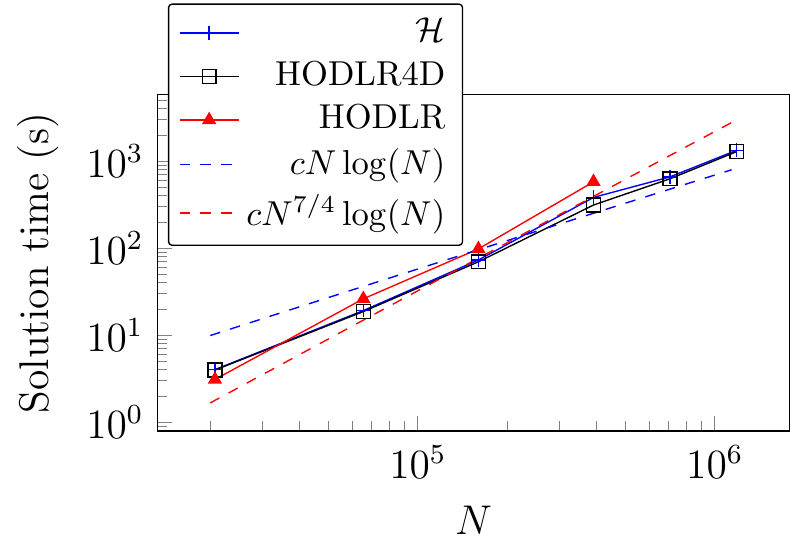}\label{ieq_sol}}\quad%
    \subfloat[]{\includegraphics[height=3cm, width=5cm]{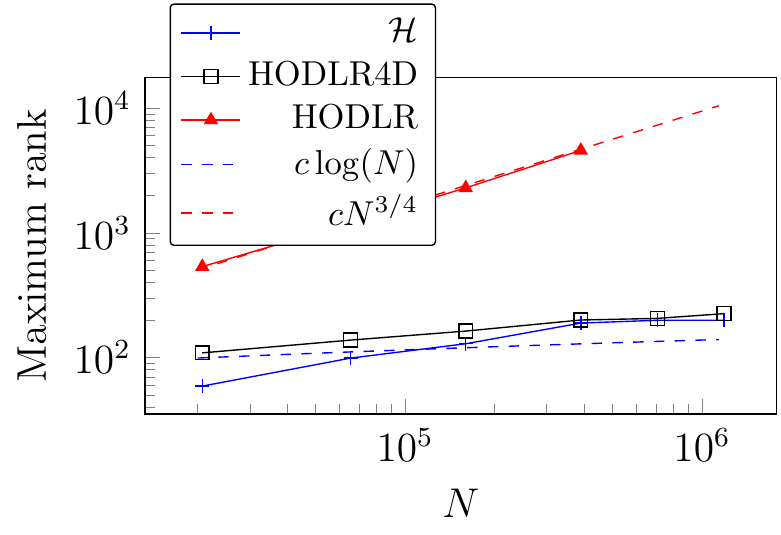}\label{ieq_rank}}\quad%
    \subfloat[]{\includegraphics[height=3cm, width=5cm]{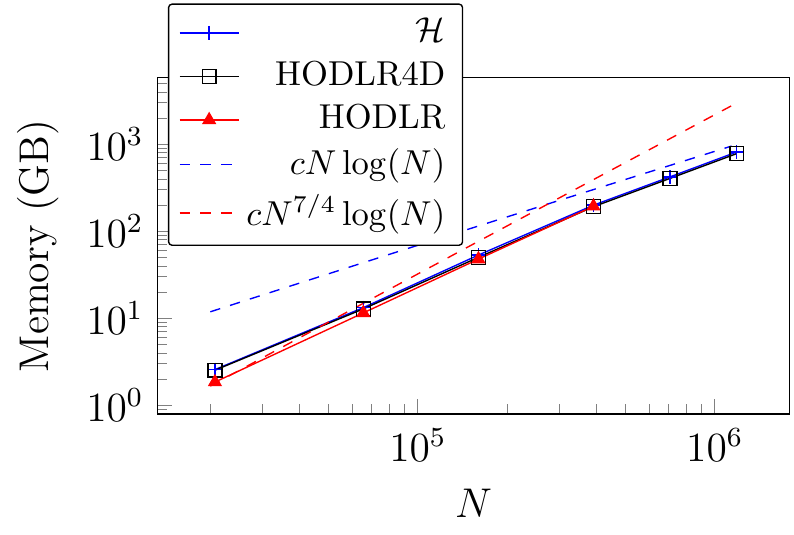}\label{ieq_mem}}\quad%
    \subfloat[]{\includegraphics[height=3cm, width=5cm]{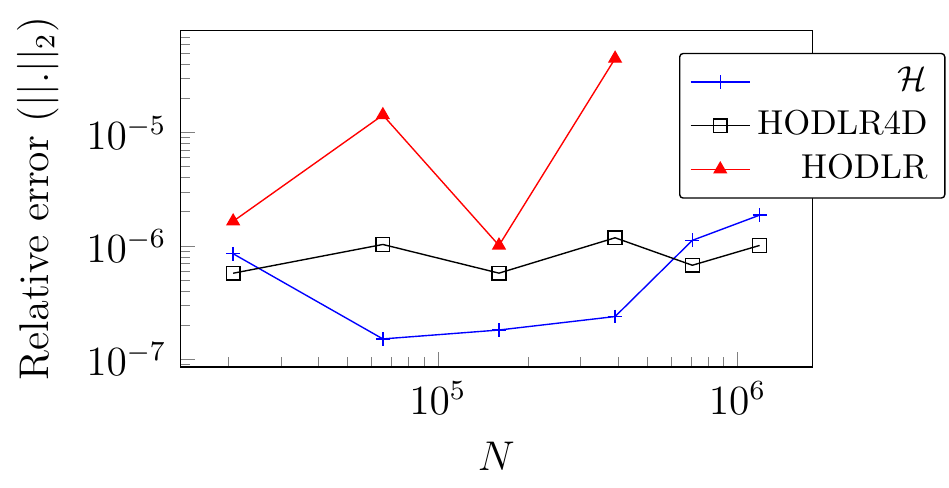}\label{ieq_er}}\quad%
    \caption{Comparison of $\mathcal{H}$, HODLR4D $(d=4)$ and HODLR accelerated GMRES (stopping tolerance $=10^{-6}$) to solve the system \cref{ieq_2}} 
    \label{fig:GMRES}
\end{figure}
\subsection{HODLR$d$D accelerated classical kernel SVM with $d$ features} \label{k_svm} In the last experiment, we chose random data points to ensure that the HODLR$d$D algorithm still applies to the \textbf{non-uniform} distribution of particles. The kernel Support Vector Machine~\cite{boser1992training}  (SVM) is a popular technique for classification problems in the supervised machine learning model. We briefly discuss how to accelerate the classical SVM for binary classification with the data set consisting of data points having $d$ features using our HODLR$d$D algorithm. The main goal of kernel SVM is to solve the objective function
\begin{equation} \label{svm_eq1}
    \underset{\pmb{\alpha}}{\operatorname{argmax}} \dsum_{i=1}^N \alpha_i - \dfrac{1}{2} \dsum_{i=1}^N \dsum_{j=1}^N K \bkt{\pmb{x}_i, \pmb{x}_j} y_i y_j \alpha_i \alpha_j
\end{equation}
subject to $0 \leq \alpha_i \leq \lambda$, $\dsum_{i=1}^N \alpha_i y_i = 0$ and $\pmb{\alpha} = \bkt{\alpha_1, \alpha_2, \dots , \alpha_N}$. 
Where $\pmb{x}_i \in \mathbb{R}^d$ is a data point with $d$ features, $K \bkt{\pmb{x}_i, \pmb{x}_j}$ is the kernel function evaluation at the data points $\pmb{x}_i$ and $\pmb{x}_j$. The label of $\pmb{x}_i$ is defined as 
            \[y_i = \begin{cases}
            1 & \text{ if } \pmb{x}_i \in \text{ Class 1} \\
            -1 & \text{ if } \pmb{x}_i \in \text{ Class 2} 
            \end{cases}\]
$i=1,2,\dots,N$.
The \cref{svm_eq1} can be cast as solving an optimization problem using Lagrange multipliers.
\newline
\begin{equation} \label{svm_eq2}
    L\bkt{\pmb{\alpha}} = \dsum_{i=1}^N \alpha_i - \dfrac{1}{2} \dsum_{i=1}^N \dsum_{j=1}^N K \bkt{\pmb{x}_i, \pmb{x}_j} y_i y_j \alpha_i \alpha_j - \dfrac{1}{2} \beta \bkt{\dsum_{j=1}^N \alpha_j y_j}^2
\end{equation}
We can use the gradient descent method to get the $\pmb{\alpha}$ for which the objective function is maximum.
\newline
\begin{equation} \label{svm_eq3}
    \dfrac{\partial L\bkt{\pmb{\alpha}}}{\partial \alpha_i} = 1 - \dsum_{j=1}^N K \bkt{\pmb{x}_i, \pmb{x}_j} y_i y_j \alpha_j -  \beta \dsum_{j=1}^N \alpha_j y_j y_i
\end{equation} 
\begin{equation} \label{svm_eqgd}
    \alpha_i \leftarrow \alpha_i + \eta \dfrac{\partial L\bkt{\pmb{\alpha}}}{\partial \alpha_i}
\end{equation}
Where $\eta$ is the learning rate and $i=1,2,\dots,N$. In SVM, we often iterate the~\cref{svm_eq3} for training purposes. In matrix-vector format the \cref{svm_eq3} can be written as
\begin{equation} \label{svm_eq4}
    \begin{bmatrix}
        \dfrac{\partial L\bkt{\pmb{\alpha}}}{\partial \alpha_1} \\
        \vdots  \\
        \dfrac{\partial L\bkt{\pmb{\alpha}}}{\partial \alpha_N} 
    \end{bmatrix}
    = 
    \begin{bmatrix}
        1 \\
        \vdots  \\
        1
    \end{bmatrix}
    -  
    \begin{bmatrix}
        y_1 K(x_1,x_1) & \dots  & y_1 K(x_1,x_N) \\
        \vdots & \vdots & \vdots \\
       y_N K(x_N,x_1) & \dots  & y_N K(x_N,x_N) \\
    \end{bmatrix}
\times
    \begin{bmatrix}
        y_1 \alpha_1 \\
        \vdots  \\
        y_N \alpha_N
    \end{bmatrix}
- \beta
    \begin{bmatrix}
        y_1   & \dots  & y_1 \\
        \vdots & \vdots & \vdots \\
       y_N   & \dots  & y_N  \\
    \end{bmatrix}
\times
    \begin{bmatrix}
        y_1 \alpha_1 \\
        \vdots  \\
        y_N \alpha_N
    \end{bmatrix}
\end{equation}
The \cref{svm_eq4} can be rewritten using MATLAB notations as
\begin{equation} \label{svm_eq5}
    \pmb{DL \bkt{\alpha}} = 1 - \pmb{y} \cdot* \bkt{K \pmb{v}} - \beta * \text{sum}(\pmb{v}) * \pmb{y}
\end{equation}
where
$\pmb{DL \bkt{\alpha}} = 
    \begin{bmatrix}
        \dfrac{\partial L\bkt{\pmb{\alpha}}}{\partial \alpha_1} \\
        \vdots  \\
        \dfrac{\partial L\bkt{\pmb{\alpha}}}{\partial \alpha_N} 
    \end{bmatrix},
    \pmb{y} = 
    \begin{bmatrix}
        y_1  \\
        \vdots  \\
        y_N
    \end{bmatrix},
    \pmb{v}=
    \begin{bmatrix}
        y_1 \alpha_1 \\
        \vdots  \\
        y_N \alpha_N
    \end{bmatrix}
$
and
$K = 
    \begin{bmatrix}
        K(x_1,x_1) & \dots  & K(x_1,x_N) \\
        \vdots & \vdots & \vdots \\
       K(x_N,x_1) & \dots  & K(x_N,x_N) \\
    \end{bmatrix}
$
is the kernel matrix.
Therefore, the \cref{svm_eqgd} becomes
\begin{equation} \label{svm_eq6}
\pmb{\alpha} \leftarrow \pmb{\alpha} + \eta \pmb{DL \bkt{\alpha}}
\end{equation}
If we iterate \cref{svm_eq6} $m$ times, then the naive matrix-vector product cost $\mathcal{O}\bkt{m N^2}$, and this makes the training time very expensive. In the Fast SVM algorithm, we accelerate the matrix-vector product $\bkt{K \pmb{v}}$ part of \cref{svm_eq5} using our HODLR$d$D algorithm, which scales almost linearly. So, the training time can be reduced using FSVM. We produce some synthetic data sets to check the performance of the SVM models as follows.
\begin{enumerate}
    \item Create randomly distributed $n$ particles in $[-1,1]$.
    \item To create synthetic data sets of $d$ features, we take the $n^d$ tesnor product grid of randomly distributed particles in $[-1,1]^d$.
    \item We create two different data sets $D_1$, with label $1$ and $D_2$, with label $-1$ by separating the data points by a function such that $ n^d =  \# D_1 + \# D_2$. The data points in $D_1$ and $D_2$ are associated with Class $1$ and Class $2$, respectively.  
    \item We create the $i^{th}$ class training set $C_i$ by taking $85 \%$ of $D_i$. We store the remaining $15\%$ in the set $c_i$ for testing purposes. i.e., $D_i = C_i \cup c_i $ with $C_i \cap c_i = \phi$, for $i=1,2$.
\end{enumerate}

We define some notations in~\cref{tab:SVM_notations}, which will be used in this subsection only.
 \begin{table}[H]
     \centering
     \resizebox{\textwidth}{!}{\begin{tabular}{|c|c|}
     \hline
        \#$C_i$ &  Size of the $i^{th}$ class training set $C_i$. $i=1,2$ \\
    \hline
        \#$c_i$  &  Size of the $i^{th}$ class testing set $c_i$. $i=1,2$  \\
    \hline
        $t_{NSVM}$  &  \makecell{Total time (in seconds) to train the Naive SVM model using the training sets. \\ In NSVM, the matrix-vector products are performed using the \href{https://eigen.tuxfamily.org}{Eigen} library.}   \\
    \hline
        $t_{FSVM}$  &  \makecell{Total time (in seconds) to train the Fast SVM model using the training sets \\ (including the HODLR$d$D initialization time (\Cref{alg:initialization})). \\ In FSVM, the matrix-vector products are performed using the HODLR$d$D algorithm. \\ (ACA tolerance $= 10^{-10}$ and maximum number of particles at leaf level, $N_{max} = 500$}   \\
    \hline
    \textit{iter}  &  The total number of iterations to train the SVM model  \\
    \hline
    ${i}_{NSVM}$  &  \makecell{Time per iteration of NSVM (in seconds) ${i}_{NSVM} = t_{NSVM}/\textit{iter} $ }\\
    \hline
    ${i}_{FSVM}$  &  \makecell{Time per iteration of FSVM (in seconds) ${i}_{FSVM} = t_{FSVM}/\textit{iter} $ }\\
    \hline
    \text{Classification score}  & We check the testing set after training. $f(\pmb{x}) = \dsum_{i=1}^{N} \alpha_i y_i K \bkt{\pmb{x}_i, \pmb{x}} + b$, where $b$ is the bias.   \\
    \hline
    \makecell{Label function \\ to classify the testing set}  &  $\makecell{g(\pmb{x}) = sgn \bkt{f(\pmb{x})} \\ 
            \text{For } \pmb{x} \in c_1 \cup c_2 \text{, if } g(\pmb{x}) = \begin{cases}
            1 & \text{ then } \pmb{x} \in s_1 \\
            -1 & \text{ the } \pmb{x} \in s_2
            \end{cases}} $\\
    \hline
    A$_1$  &  Accuracy of Class 1  $= \bkt{\dfrac{\# s_1}{\# c_1} \times 100} \%$ \\
    \hline
    A$_2$   &  Accuracy of Class 2 $= \bkt{\dfrac{\# s_2}{\# c_2} \times 100} \%$ \\
    \hline
    OA  & Overall accuracy of the model $= \bkt{\dfrac{\# s_1 + \# s_2}{\# c_1 + \# c_2} \times 100} \%$ \\
    \hline
     \end{tabular}}
     \caption{Notations for this subsection (\Cref{k_svm})}
     \label{tab:SVM_notations}
 \end{table}
 Though our HODLR$d$D code applies for any dimension $d$, we choose $d=4$ and $d=5$ due to limited RAM and other computational constraints. We present the performance of the HODLR$d$D accelerated Fast SVM (FSVM) over the Naive SVM (NSVM) in~\cref{tab:SVM_4,tab:SVM_5} for the data sets with four (kernel is Matérn kernel) and five features (kernel is Gaussian kernel), respectively. All the data sets used in~\cref{tab:SVM_4,tab:SVM_5} are different in each case, and we compare the NSVM and FSVM with the same hyperparameters configuration (e.g., learning rate, etc.). We can see from~\cref{tab:SVM_4,tab:SVM_5} that the FSVM reduces the training time, which is the most expensive part of the SVM algorithm. Hence, we can achieve the same accuracy of the SVM model in a faster way using the HODLR$d$D algorithm. Also, the HODLR$d$D algorithm can handle the non-uniform distribution of particles (like FMM) for higher dimensional problems. All the SVM implementations were done in $\texttt{C++}$, and the computations for this experiment were performed on a quad-core, 1.4 GHz Intel Core i5 processor with 8GB RAM.
 \begin{table}[H]
\begin{tabular}{|l|ll|ll|ll|ll|l|l|l|}
\hline
$n$ & \multicolumn{2}{l|}{\#Training set} & \multicolumn{2}{l|}{\#Testing set} & \multicolumn{2}{l|}{Training time (s)}    & \multicolumn{2}{l|}{Time per iter.(s)} & \multirow{2}{*}{A$_1 (\%)$} & \multirow{2}{*}{A$_2 (\%)$} & \multirow{2}{*}{OA (\%)} \\ \cline{1-9}
  & \multicolumn{1}{l|}{\#$C_1$}      & \#$C_2$     & \multicolumn{1}{l|}{\#$c_1$}    & \#$c_2$   & \multicolumn{1}{l|}{$t_{NSVM}$} & $t_{FSVM}$ & \multicolumn{1}{l|}{${i}_{NSVM}$}  & ${i}_{FSVM}$  &                      &                      &                    \\ \hline
 8 & \multicolumn{1}{l|}{1740}         &  1742       & \multicolumn{1}{l|}{307}       &  307     & \multicolumn{1}{l|}{204.07}       & 48.67        & \multicolumn{1}{l|}{0.20}        &  0.04        &    94.13                  &       98.04               &      96.09              \\ \hline
10  & \multicolumn{1}{l|}{4155}         &  4346       & \multicolumn{1}{l|}{733}       &   766    & \multicolumn{1}{l|}{1429.37}       &  134.75       & \multicolumn{1}{l|}{1.51}        &  0.14        &   95.63                   &        99.61              &    97.67                \\ \hline
 11 & \multicolumn{1}{l|}{6387}         &  6058       & \multicolumn{1}{l|}{1127}       &  1069     & \multicolumn{1}{l|}{3891.57}       &  237.65       & \multicolumn{1}{l|}{3.06}        &  0.18        &  97.87                    &     99.15                 &     98.49               \\ \hline
\end{tabular}
     \caption{Performance comparison with the data sets of four features and Matérn kernel $\bkt{e^{-r}}$. In FSVM, the matrix-vector product is accelerated using HODLR4D $(d=4)$}
     \label{tab:SVM_4}
\end{table}
\begin{table}[H]
\begin{tabular}{|l|ll|ll|ll|ll|l|l|l|}
\hline
$n$ & \multicolumn{2}{l|}{\#Training set} & \multicolumn{2}{l|}{\#Testing set} & \multicolumn{2}{l|}{Training time (s)}    & \multicolumn{2}{l|}{Time per iter.(s)} & \multirow{2}{*}{A$_1 (\%)$} & \multirow{2}{*}{A$_2 (\%)$} & \multirow{2}{*}{OA (\%)} \\ \cline{1-9}
  & \multicolumn{1}{l|}{\#$C_1$}      & \#$C_2$     & \multicolumn{1}{l|}{\#$c_1$}    & \#$c_2$   & \multicolumn{1}{l|}{$t_{NSVM}$} & $t_{FSVM}$ & \multicolumn{1}{l|}{${i}_{NSVM}$}  & ${i}_{FSVM}$  &                      &                      &                    \\ \hline
 5 & \multicolumn{1}{l|}{1416}         &  1242       & \multicolumn{1}{l|}{248}       &  219     & \multicolumn{1}{l|}{94.71}       & 9.10        & \multicolumn{1}{l|}{0.11}        &  0.01        &    94.78                  &       95.43               &      95.08              \\ \hline
6  & \multicolumn{1}{l|}{3367}         &  3244       & \multicolumn{1}{l|}{593}       &   572    & \multicolumn{1}{l|}{1731.32}       &  111.68       & \multicolumn{1}{l|}{1.02}        &  0.06        &   92.59                   &        97.20              &    94.85                \\ \hline
\end{tabular}
     \caption{Performance comparison with the data sets of five features and Gaussian kernel $\bkt{e^{-r^2}}$. In FSVM, the matrix-vector product is accelerated using HODLR5D $(d=5)$}
     \label{tab:SVM_5}
\end{table}
\section{Conclusion} We have proved two theorems regarding the rank growth of all the interaction matrices in $d$ dimensions. As a consequence of the theorems, we see that the rank of not just the \emph{far-field} but also the \emph{vertex-sharing} interaction matrices do not scale with any power of $N$. The rank of other hyper-surface sharing interactions scale with some power of $N$. Based on our theorems, we try to extend the notion of \textit{weak admissibility} to higher dimensions and develop an almost linear complexity black-box (kernel independent) algorithm (HODLR$d$D) for $N$-body problems in $d$ dimensions. The $\texttt{C++}$ implementation with \texttt{OpenMP} parallelization of the HODLR$d$D algorithm is available at \texttt{\url{https://github.com/SAFRAN-LAB/HODLRdD}}. The code's speciality is that it is applicable for any value of dimension $d$. We apply our HODLR$d$D algorithm to perform a fast matrix-vector product, solve a $4$D integral equation and accelerate the classical kernel SVM with four and five features. Also, we compare the performance of HODLR$d$D over $\mathcal{H}$ matrix with strong admissibility and HODLR. The numerical results demonstrate that the HODLR$d$D performs way better than the HODLR and is highly competitive to the $\mathcal{H}$ matrix with strong admissibility condition (most of the times HODLR$d$D performs better than $\mathcal{H}$ matrix with strong admissibility). 
We also hope our theorems could be leveraged to build other newer, faster algorithms. A suitable hierarchical representation leveraging the low-rank of nearby interactions could be used to construct direct solvers.
\section{Acknowledgments} We would like to thank Vaishnavi Gujjula for fruitful discussions and also for proof-reading the article. The last author would like to thank the MATRICS grant from SERB (Sanction number: MTR/2019/001241) and YSRA from BRNS, DAE (No.34/20/03/2017-BRNS/34278). We acknowledge the use of the computing resources at HPCE, IIT Madras.

\bibliographystyle{siamplain}
\bibliography{refs}

\newpage
\appendix
\section{Rank growth of different interactions in 1D} \label{1d}
 Using the~\cref{th3}, we discuss the rank of far-field and vertex-sharing interactions as shown in~\cref{1d_interaction}. $I$ and V are the far-field and vertex-sharing domains of $Y$, respectively. 
     \begin{figure}[H]
     	\centering \subfloat[Far field and vertex-sharing domains of $Y$]{\label{1d_interaction}\resizebox{4cm}{!}{
            \begin{tikzpicture}
            \draw[ultra thick] (0,0) -- (3,0);
            \redcircle{0}{0};
            \redcircle{1}{0};
            \redcircle{2}{0};
            \redcircle{3}{0};
            \node at (0.5,0.2) {$I$};
            \node at (1.5,0.2) {V};
            \node at (2.5,0.2) {$Y$};
            \end{tikzpicture}
        }}
        \qquad \qquad \qquad
        \centering \subfloat[Far-field domain]{\label{1d_far}\resizebox{4cm}{!}{
        \begin{tikzpicture}
            \draw[|-|,dashed] (0,0) node[anchor=north] {$-r$} -- (2,0);
            \draw[|-|] (-2,0) node[anchor=north] {$-2r$} -- (0,0);
            \draw[|-|] (2,0) -- (4,0);   
            \node[anchor=north] at (2,0) {$0$};
            \node[anchor=north] at (4,0) {$r$};
            
            \node[anchor=north] at (-1,0.75) {X};
            \node[anchor=north] at (3,0.75) {Y};
        \end{tikzpicture}
        }}
        \caption{Different interactions in $1$D}
    \end{figure}
    \subsection{\textbf{\textit{Rank growth of far-field domains}}} Here, we will assume far-field interaction means the interaction between two domains which are one line segment away. In~\cref{1d_far} the lines $Y = [0,r]$ and $X = [-2r,-r]$ are one line-segment away. We choose $\bkt{p+1}$ Chebyshev nodes in the domain $Y$ to obtain the polynomial interpolation of $F(\xb,\yb)$ along $\yb$. Let $M_i = \displaystyle \sup_{\yb \in \mathcal{B}\bkt{{Y,\rho}}} \abs{F_a\bkt{\xb_i,\yb}}$ ,  $\rho \in (1,\alpha)$, for some $\alpha >1$. Let $\Tilde{K}$ be the approximation of the matrix \(K\). Then by~\cref{th3} the absolute error along the $i^{th}$ row is given by (MATLAB notation)
    \begin{equation} \label{ap_1dfar}
        \Bigl\lvert \bkt{K \bkt{i,:} -\Tilde{K}\bkt{i,:} } \Bigl\lvert \leq \frac{4M_i \rho^{-p}}{\rho -1} , \quad 1 \leq i \leq N
    \end{equation}
         Let $M = \displaystyle \max_{1 \leq i\leq N} \{M_i\}$. Therefore,
         \begin{equation}
             \magn{K  - \Tilde{K}}_{max} \leq 4 M \dfrac{\rho^{-p}}{\rho -1} 
             \implies \dfrac{\magn{K  - \Tilde{K}}_{max}}{\magn{K}_{max}} \leq \dfrac{4M}{\magn{K}_{max}} \dfrac{\rho^{-p}}{\rho -1} = \dfrac{c\rho^{-p}}{\rho -1}
         \end{equation}
    where $c = \dfrac{4M}{\magn{K}_{max}}$. Now, choosing $p$ such that the above relative error to be less than $\delta$ (for some $\delta >0$), we obtain
        $p = \ceil{\frac{ \log \bkt{\frac{c}{\delta (\rho-1)}}}{\log(\rho)}} \implies \dfrac{\magn{K  - \Tilde{K}}_{max}}{\magn{K}_{max}} < \delta$.
Since, the rank of $\Tilde{K} = (p+1)$, i.e., the rank is bounded above by $\bkt{1+\ceil{\frac{\log\bkt{\frac{c}{\delta(\rho-1)}}}{\log(\rho)}}}$.
    Therefore, the rank of $\Tilde{K}$ scales $\mathcal{O}\bkt{ \log\bkt{\frac{1}{\delta}}}$ with
    $\dfrac{\magn{K  - \Tilde{K}}_{max}}{\magn{K}_{max}} < \delta$. The numerical rank plots of the far-field interaction of the eight functions as described in~\Cref{Preliminaries} are shown in~\cref{fig:res_1d_far_log} and tabulated in~\cref{tab:res_1d_far_log}.

\cmt{
    \begin{figure}[H]
    \centering
    \subfloat[][N vs Rank plot\label{fig:res_1d_farc}]{\includegraphics[width=0.5\columnwidth]{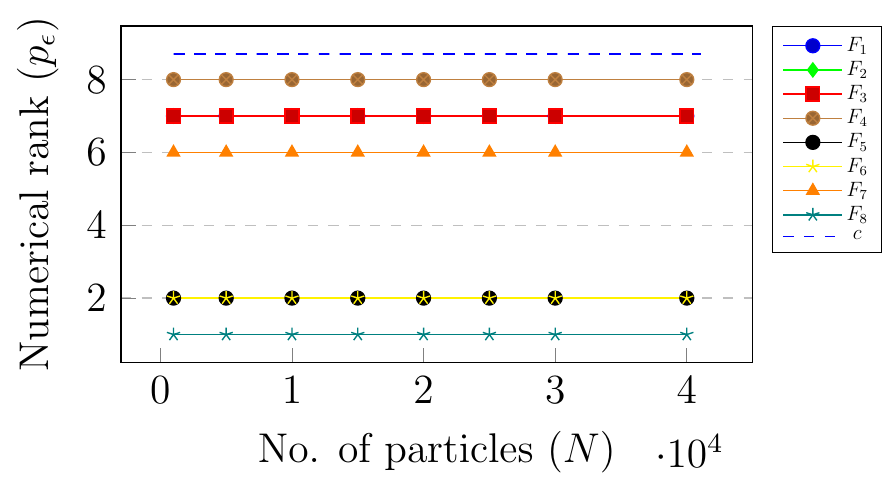}}\qquad
    \subfloat[][Numerical rank of different kernel functions]
    {\adjustbox{width=0.4\columnwidth,valign=B,raise=1.4\baselineskip}{%
      \renewcommand{\arraystretch}{1}%
      \label{tab:res_1d_farc}%
        \begin{tabular}{c|cccc}\hline
            $N$ & $F_1(x,y)$ & $F_2(x,y)$ & $F_3(x,y)$ & $F_4(x,y)$ \\ \hline
            1000 & 7 & 7 & 8 & 8 \\
            5000 & 7 & 7 & 8 & 8 \\
            10000 & 7 & 7 & 8 & 8 \\
            15000 & 7 & 7 & 8 & 8 \\
            20000 & 7 & 7 & 8 & 8 \\
            25000 & 7 & 7 & 8 & 8 \\
            30000 & 7 & 7 & 8 & 8 \\
            40000  & 7 & 7 & 8 & 8 \\ \hline
        \end{tabular}}
    }
    \caption{Numerical rank $\bkt{tol = 10^{-12}}$ of far-field interaction in $1$D}
    \label{result_1d_farc}
    \end{figure}
}
    \subsection{\textbf{\textit{Rank growth of vertex-sharing domains}}}
    Consider a pair of vertex-sharing line-segments (domains) $Y = [0,r]$ and $X = [-r,0]$ of length $r$ as shown in~\cref{1d_ver}. The domain \(Y\) is hierarchically sub-divided using an adaptive binary tree as shown in~\cref{1d_ver_sub_l1,1d_ver_sub}, which gives

    \begin{equation}
        Y = \overbrace{Y_2 \bigcup Y_1}^{\text{At level 1}} = \overbrace{\left[0,\dfrac{r}{2} \right] \bigcup  \left[\dfrac{r}{2},r \right]}^{\text{At level 1}} = \dots = \overbrace{Y_{\kappa +1} \bigcup_{k=1}^{\kappa} Y_{k}}^{\text{At level }\kappa}  = \overbrace{\left[0,\dfrac{r}{2^{\kappa}} \right] \bigcup_{k=1}^{\kappa} \left[\dfrac{r}{2^{k}}, \dfrac{r}{2^{k-1}} \right]}^{\text{At level }\kappa}
    \end{equation} 
    where $\kappa \sim \log_2(N)$ and $ Y_{\kappa +1 } = \left[0,\dfrac{r}{2^{\kappa}} \right]$ having one particle. Let $\Tilde{K_k}$ be the approximation of $K_k$, then the approximation of the matrix \(K\) is given by
    \begin{equation}
        \Tilde{K} = \dsum_{k=1}^{\kappa} \Tilde{K_k} + K_{\kappa+1}
    \end{equation}
    We choose a $\bkt{p_k+1}$ Chebyshev nodes in the line-segment $Y_{k}$ to obtain the polynomial interpolation of $F(\xb,\yb)$ along $\yb$. Let $M^{(i)}_{k} = \displaystyle \sup_{\yb \in \mathcal{B} \bkt{{Y_{k},\rho}}} \abs{F_a\bkt{\xb_i,\yb}}, \rho \in \bkt{1, \alpha}$, for some $\alpha>1$. Then by~\cref{th3} the absolute error along the $i^{th}$ row of the matrix $K_{k}$ is given by (MATLAB notation)
    \begin{equation}
        \Bigl\lvert \bkt{K_k \bkt{i,:}-\Tilde{K}_k \bkt{i,:}} \Bigl\lvert \leq \frac{M^{(i)}_{k} \rho^{-p_k}}{\rho -1}, \quad 1 \leq i \leq N, \quad 1 \leq k \leq \kappa
    \end{equation}
    Let $M = \max \{ M^{(i)}_{k} \} $. Therefore, 
    \begin{equation}
       \magn{K_k - \Tilde{K}_k}_{max} \leq \frac{M  \rho^{-p_k}}{\rho -1} \implies \dfrac{ \magn{K_k - \Tilde{K}_k}_{max}}{\magn{K}_{max}} \leq \frac{M  \rho^{-p_k}}{\magn{K}_{max}\bkt{\rho -1}} = \dfrac{c \rho^{-p_k}}{\rho -1}
    \end{equation}
    where $c = \dfrac{M}{\magn{K}_{max}}$. Now choosing $p_{k}$ such that the above error is less than $\delta_1$ (for some $\delta_1>0$), we obtain \\ $p_k = \bkt{\ceil{\frac{\log\bkt{\frac{c}{\delta_1(\rho-1)}}}{\log(\rho)}}} \implies \dfrac{ \magn{K_k - \Tilde{K}_k}_{max}}{\magn{K}_{max}} < \delta_1$ with rank of $\Tilde{K}_{k} $ is $ (1 + p_k)$.\\
    Let $p_l = \max \{p_{k} : k=1,2,\hdots, \kappa \}$, which corresponds to $\Tilde{K}_l$. Hence, we get
    \begin{align}
    \begin{split}
        \magn{K - \Tilde{K}}_{max} = \magn{ \bkt{K_1+ K_2+\cdots + K_\kappa} - \bkt{\Tilde{K_1}+\Tilde{K_2}+\cdots + \Tilde{K_\kappa}}}_{max} \\ \leq \dsum_{k=1}^\kappa \magn{K_k - \Tilde{K}_k }_{max}  < \kappa \delta_1 \magn{K}_{max}
    \end{split}
    \end{align}
    The rank of $\Tilde{K}$ is bounded above by 
    $\bkt{1+ \kappa \bkt{1+p_l}} = \bkt{1 + \kappa \bkt{1+\ceil{\frac{\log\bkt{\frac{c}{\delta_1(\rho-1)}}}{\log(\rho)}}}}$.
    Note that the rank of $K_{\kappa +1}$ is one. If we choose $\delta_1 = \dfrac{\delta}{\kappa}$, then $ \dfrac{\magn{K - \Tilde{K}}_{max}}{\magn{K}_{max}} < \delta$.

    Therefore, the rank of $\Tilde{K}$ scales $\mathcal{O}\bkt{\log_2(N) \log\bkt{\frac{ \log_2(N)}{\delta}}}$ with
    $ \dfrac{\magn{K - \Tilde{K}}_{max}}{\magn{K}_{max}} < \delta$. The numerical rank plots of the vertex-sharing interaction of the eight functions as described in~\Cref{Preliminaries} are shown in~\cref{fig:res_1d_ver_log} and tabulated in~\cref{tab:res_1d_vert_log}.
    
\cmt{
    \begin{figure}[H]
    \centering
    \subfloat[][N vs Rank plot]{\includegraphics[width=0.5\columnwidth]{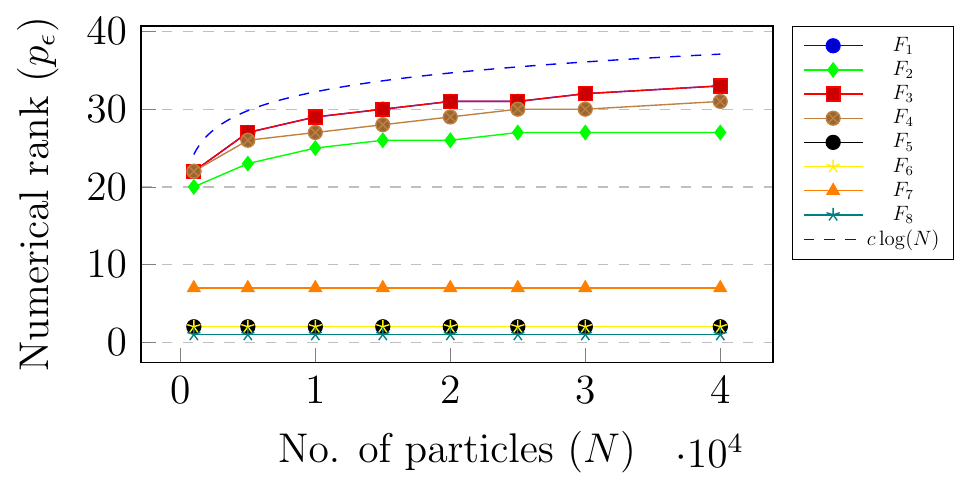}}
    \qquad
    \subfloat[][Numerical rank of different kernel functions]
    {\adjustbox{width=0.4\columnwidth,valign=B,raise=1.4\baselineskip}{%
      \renewcommand{\arraystretch}{1}%
        \begin{tabular}{c|cccc}\hline
            $N$ & $F_1(x,y)$ & $F_2(x,y)$ & $F_3(x,y)$ & $F_4(x,y)$ \\ \hline
            1000 & 22 & 20 & 22 & 22 \\
            5000 & 27 & 23 & 27 & 26 \\
            10000 & 29 & 25 & 29 & 27 \\
            15000 & 30 & 26 & 30 & 28 \\
            20000 & 31 & 26 & 31 & 29 \\
            25000 & 31 & 27 & 31 & 30 \\
            30000 & 32 & 27 & 32 & 30 \\
            40000 & 33 & 27 & 33 & 31 \\ \hline
        \end{tabular}}
    }
    \caption{Numerical rank $\bkt{tol = 10^{-12}}$ of vertex-sharing interaction in $1$D}
    \label{result_1d_verc}
    \end{figure}
}


\section{Rank growth of different interactions in 2D} \label{2d}  We discuss the rank of far-field, vertex-sharing and edge sharing interactions as shown in~\cref{2d_inter}. $I$, V and $E$ are far-field, vertex-sharing and edge sharing domains of $Y$ respectively. 

\begin{figure}[H]
    \centering   \subfloat[Far field, vertex-sharing and Edge sharing of $Y$]{\label{2d_inter}\resizebox{5cm}{!}{
        \begin{tikzpicture}[scale=1]
            \redsquare{0}{0};
            \redsquare{1}{0};
            \redsquare{2}{0};
            \redsquare{1}{-1};
            \node at (0.5,0.5) {$I$};
            \node at (1.5,-0.5) {V};
            \node at (1.5,0.5) {$E$};
            \node at (2.5,0.5) {$Y$};
        \end{tikzpicture}
    \label{2d_interactions}
        }}
        \qquad \qquad \qquad
    \centering   \subfloat[Far-field square boxes]{\label{2d_far_pic}\resizebox{5cm}{!}{
		\begin{tikzpicture}[scale=0.75]
			\draw (-1,-1) rectangle (1,1);
			\draw (3,-1) rectangle (5,1);
			\node at (0,0) {$X$};
			\node at (4,0) {$Y$};
			\draw [<->] (-1,-1.25) -- (1,-1.25);
			\draw [<->] (1,-1.25) -- (3,-1.25);
			\draw [<->] (3,-1.25) -- (5,-1.25);
			\draw [<->] (-1.25,-1) -- (-1.25,1);
			\node at (2,-1.5) {$r$};
			\node at (0,-1.5) {$r$};
			\node at (4,-1.5) {$r$};
			\node at (-1.5,0) {$r$};
		\end{tikzpicture}
		}}
    \caption{Different interactions in $2$D}
    \label{2d_far}
\end{figure}

    \subsection{\textbf{\textit{Rank growth of far-field domains}}} Let $X$ and $Y$ be square boxes of length $r$ and the distance between them is also $r$ as shown in ~\cref{2d_far_pic}. We choose a $\bkt{p+1}\times\bkt{p+1}$ tensor product grid on Chebyshev nodes in the square $Y$ to obtain the polynomial interpolation of $F(\xb,\yb)$ along $\yb$. Let $M_i = \displaystyle \sup_{\yb \in \mathcal{B}\bkt{{Y,\rho}}} \abs{F_a\bkt{\xb_i,\yb}}$ ,  $\rho \in (1,\alpha)^2$, for some $\alpha >1$. Let $\Tilde{K}$ be the approximation of the matrix \(K\). Then by~\cref{eq1} the absolute error along the $i^{th}$ row is given by (MATLAB notation)
        \begin{equation} \label{ap_2dfar}
            \Bigl\lvert \bkt{K \bkt{i,:}-\Tilde{K} \bkt{i,:}} \Bigl\lvert \leq 4 M_i V_2 \frac{\rho^{-p}}{\rho -1}, \quad 1 \leq i \leq N
        \end{equation}
         Let $M = \displaystyle \max_{1 \leq i\leq N} \{M_i\}$. Therefore,
         \begin{equation}
             \magn{K  - \Tilde{K}}_{max} \leq 4 M V_2 \dfrac{\rho^{-p}}{\rho -1} 
             \implies \dfrac{\magn{K  - \Tilde{K}}_{max}}{\magn{K}_{max}} \leq \dfrac{4M V_2}{\magn{K}_{max}} \dfrac{\rho^{-p}}{\rho -1} = \dfrac{c\rho^{-p}}{\rho -1}
         \end{equation}
    where $c = \dfrac{4M V_2}{\magn{K}_{max}}$. Now, choosing $p$ such that the above relative error to be less than $\delta$ (for some $\delta >0$), we obtain
        $p = \ceil{\frac{ \log \bkt{\frac{c}{\delta (\rho-1)}}}{\log(\rho)}} \implies \dfrac{\magn{K  - \Tilde{K}}_{max}}{\magn{K}_{max}} < \delta$.
Since, the rank of $\Tilde{K} = (p+1)^2$, i.e., the rank is bounded above by $\bkt{1+\ceil{\frac{\log\bkt{\frac{c}{\delta(\rho-1)}}}{\log(\rho)}}}^2$.
        Therefore, the rank of $\Tilde{K}$ scales
        $\mathcal{O}\bkt{ \log^2 \bkt{\frac{1}{\delta}}}$ with $ \dfrac{\magn{K  - \Tilde{K}}_{max}}{\magn{K}_{max}} < \delta$. The numerical rank plots of the far-field interaction of the eight functions as described in~\Cref{Preliminaries} are shown in~\cref{fig:res_2d_far_log} and tabulated in~\cref{tab:res_2d_far_log}
\cmt{
    \begin{figure}[H]
    \centering
    \subfloat[][N vs Rank plot]{\includegraphics[width=0.5\columnwidth]{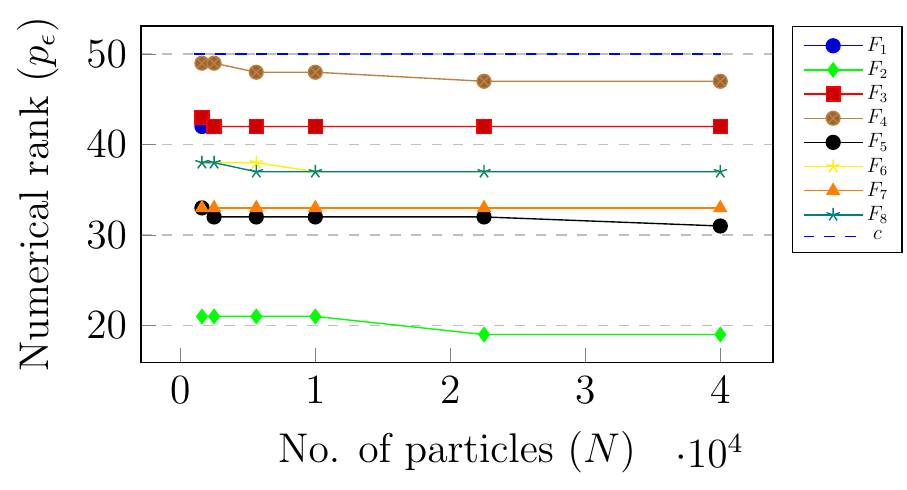}}\qquad
    \subfloat[][Numerical rank of different kernel functions]
    {\adjustbox{width=0.4\columnwidth,valign=B,raise=1.4\baselineskip}{%
      \renewcommand{\arraystretch}{1}%
        \begin{tabular}{c|cccc}\hline
            $N$ & $F_1(x,y)$ & $F_2(x,y)$ & $F_3(x,y)$ & $F_4(x,y)$ \\ \hline
            1600 & 42 & 21 & 48	& 49 \\
            2500 & 42 & 21 & 48	& 49 \\
            5625 & 42 & 21 & 47	& 48 \\
            10000 & 42 & 21	& 47 & 48 \\
            22500 & 42 & 19	& 47 & 47 \\
            40000  & 42	& 19 & 46 & 47 \\ \hline
        \end{tabular}}
    }
    \caption{Numerical rank $\bkt{tol = 10^{-12}}$ of far-field interaction in $2$D}
    \label{result_2d_far}
\end{figure}
}
    \subsection{\textbf{\textit{Rank growth of vertex-sharing domains}}}
        Consider two vertex-sharing square domains of length \(r\), \(X\) and \(Y\) as shown in~\cref{2d_vertex}. The box $Y$ is hierarchically sub-divided using an adaptive quad tree as shown in~\cref{2d_vertex_divide_level1,2d_vertex_divide}, which gives

        \begin{equation}
            Y = \overbrace{Y_2 \bigcup_{j=1}^{2^2-1} Y_{1,j}}^{\text{At level 1}} = \dots = \overbrace{Y_{\kappa + 1} \bigcup_{k=1}^{\kappa} \bigcup_{j=1}^{3} Y_{k,j}}^{\text{At level } \kappa}
        \end{equation}
        where $\kappa \sim \log_4(N)$ and $ Y_{\kappa +1 }$ having one particle. Let $\Tilde{K}$ be the approximation of the kernel matrix \(K\).
        Let $\Tilde{K}_{k,j}$ be approximation of the matrix ${K_{k,j}}$, then approximation of matrix $K$ is given by
        \begin{equation}
            \Tilde{K} = \sum_{k=1}^{\kappa} \sum_{j=1}^{3} \Tilde{K}_{k,j} + K_{\kappa+1}
        \end{equation}
        We choose a $\bkt{p_{k,j}+1}\times\bkt{p_{k,j}+1}$ tensor product grid on Chebyshev nodes in the square $Y_{k,j}$ to obtain the polynomial interpolation of $F(\xb,\yb)$ along $\yb$. Let $M^{(i)}_{k,j} = \displaystyle \sup_{\yb \in \mathcal{B} \bkt{{Y_{k,j},\rho}}} \abs{F_a\bkt{\xb_i,\yb}}$,  $\rho \in (1,\alpha)^2$, for some $\alpha > 1$. Then using~\cref{eq1} the absolute error along the $i^{th}$ row of the matrix $K_{k,j}$ is given by (MATLAB notation)
        \begin{equation} \label{ver_2}
            \Bigl\lvert \bkt{K_{k,j} \bkt{i,:}-\Tilde{K}_{k,j} \bkt{i,:} } \Bigl\lvert \leq 4 M^{(i)}_{k,j} V_2 \frac{\rho^{-p_{k,j}}}{\rho -1}, \quad 1 \leq i \leq N, \quad 1 \leq k \leq \kappa, \quad 1 \leq j \leq 3
        \end{equation}
        $M = \max \{  M^{(i)}_{k,j} \}$. Therefore,
        \begin{equation}
            \magn{K_{k,j} - \Tilde{K}_{k,j} }_{max} \leq 4 M V_2 \frac{\rho^{-p_{k,j}}}{\rho -1} \implies \dfrac{\magn{K_{k,j} - \Tilde{K}_{k,j} }_{max}}{\magn{K}_{max}} \leq \dfrac{4M V_2}{\magn{K}_{max}} \frac{\rho^{-p_{k,j}}}{\rho -1} = c \frac{\rho^{-p_{k,j}}}{\rho -1} 
        \end{equation}
        where $c = \dfrac{4M V_2}{\magn{K}_{max}}$. Now choosing $p_{k,j}$ such that the above error is less than $\delta_1$ (for some $\delta_1>0$), we obtain \\ 
        $p_{k,j} =  \ceil{\frac{ \log \bkt{\frac{c}{\delta_1 (\rho-1)}}}{\log(\rho)}} \implies  \dfrac{\magn{K_{k,j} - \Tilde{K}_{k,j} }_{max}}{\magn{K}_{max}} < \delta_1$ with rank of $\Tilde{K}_{k,j} $ is $ (1 + p_{k,j})^2$. \\
    Let $p_{l,m} = \max \{p_{k,j} : k=1,2,\hdots, \kappa \text{ and } j = 1,2,3 \}$, which corresponds to $\Tilde{K}_{l,m}$.
        So, at level $k$ the absolute error of matrix $K_k$ is
        \begin{align}
            \magn{K_k - \Tilde{K}_k}_{max} = \magn{\sum_{j=1}^{3}  \bkt{K_{k,j} -\Tilde{K}_{k,j} }}_{max} \leq \sum_{j=1}^{3} \magn{\bkt{K_{k,j} -\Tilde{K}_{k,j}}}_{max} < 3 \delta_1 \magn{K}_{max}
        \end{align}
    with rank of $\Tilde{K}_k$ is bounded above by $3 (1 + p_{l,m})^2$.
        Hence, we get 
        \begin{gather}
            \magn{K - \Tilde{K}}_{max} = \magn{\dsum_{k=1}^\kappa \bkt{K_k - \Tilde{K}_k} }_{max} \leq \dsum_{k=1}^\kappa \magn{K_k - \Tilde{K}_k}_{max} < 3 \kappa \delta_1 \magn{K}_{max}
        \end{gather}
        and the rank of $\Tilde{K} $ is bounded above by $ 1+3 \kappa (1 + p_{l,m})^2 = 1+3 \kappa \bkt{1 + \ceil{\frac{ \log \bkt{\frac{c}{\delta_1 (\rho-1)}}}{\log(\rho)}}}^2$. Note that, the rank of $K_{\kappa +1}$ is one.
        If we choose $\delta_1 = \dfrac{\delta}{3 \kappa}$, then $\dfrac{\magn{K - \Tilde{K}}_{max}}{\magn{K}_{max}} < \delta$ with rank of $\Tilde{K} $ bounded above by $ 1 + 3 \kappa \bkt{1+\ceil{\frac{ \log \bkt{\frac{3 \kappa c}{\delta (\rho-1)}}}{\log(\rho)}}}^2 = 1 + 3 \log_4(N) \bkt{1+\ceil{\frac{ \log \bkt{\frac{3 \log_4(N) c}{\delta (\rho-1)}}}{\log(\rho)}}}^2$.
        
        Therefore, the rank of $\Tilde{K}$ scales $\mathcal{O}\bkt{\log_4(N)\log^2 \bkt{\frac{ \log_4(N)}{\delta}}}$ with
        $\dfrac{\magn{K - \Tilde{K}}_{max}}{\magn{K}_{max}} < \delta$. The numerical rank plots of the vertex-sharing interaction of the eight functions as described in~\Cref{Preliminaries} are shown in~\cref{fig:res_2d_ver_log} and tabulated in~\cref{tab:res_2d_ver_log}.
         
 \cmt{
    \begin{figure}[H]
    \centering
    \subfloat[][N vs Rank plot]{\includegraphics[width=0.5\columnwidth]{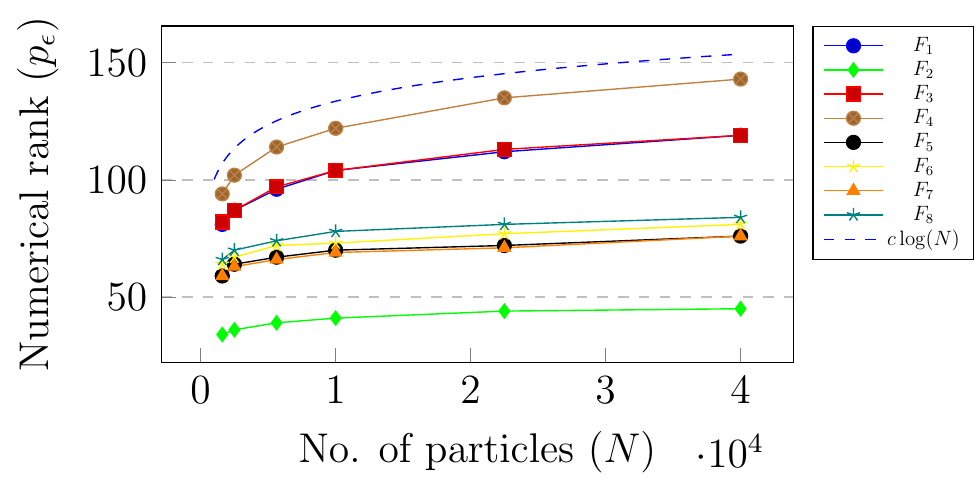}}\qquad
    \subfloat[][Numerical rank of different kernel functions]
    {\adjustbox{width=0.4\columnwidth,valign=B,raise=1.4\baselineskip}{%
      \renewcommand{\arraystretch}{1}%
        \begin{tabular}{c|cccc}\hline
            $N$ & $F_1(x,y)$ & $F_2(x,y)$ & $F_3(x,y)$ & $F_4(x,y)$ \\ \hline
            1600 & 81 & 34 & 87	& 94 \\
            2500 & 87 & 36 & 91 & 102 \\
            5625 & 96 & 39 & 101 & 114 \\
            10000 & 104	& 41 & 108 & 122 \\
            22500 & 112	& 44 & 118 & 135 \\
            40000  & 119 & 45 & 124 & 143 \\ \hline
        \end{tabular}}
    }
    \caption{Numerical rank $\bkt{tol = 10^{-12}}$ of vertex-sharing interaction in $2$D}
    \label{result_2d_ver}
    \end{figure}         
 }       
    
    \subsection{\textbf{\textit{Rank growth of edge sharing domains}}}

    Consider two edge sharing square boxes \(X\) and \(Y\) as shown in~\cref{2d_edge}. The box \(Y\) is hierarchically sub-divided using an adaptive quad tree as shown in~\cref{2d_edge_divide_l1,2d_edge_divide}, which gives
    \begin{equation}
        Y = \overbrace{Y_2 \bigcup_{j=1}^{2} Y_{1,j}}^{\text{At level 1}} = \dots= \overbrace{Y_{\kappa +1} \bigcup_{k=1}^{\kappa} \bigcup_{j=1}^{2^k} Y_{k,j}}^{\text{At level }\kappa}
    \end{equation}
    where $\kappa \sim \log_4(N)$ and $Y_{\kappa +1 }$ having $2^{\kappa} = \sqrt{N}$ particles. Let $\Tilde{K}_{k,j}$ be approximation of ${K_{k,j}}$, then approximation of matrix $K$ is given by
    \begin{equation}
        \Tilde{K} = \sum_{k=1}^{\kappa} \sum_{j=1}^{2^k} \Tilde{K}_{k,j} + K_{\kappa+1}
    \end{equation}
    We choose a $\bkt{p_{k,j}+1}$ tensor product grid on Chebyshev nodes in the square $Y_{k,j}$ to obtain the polynomial interpolation of $F(\xb,\yb)$ along $\yb$ by interpolating the function only along the direction of \emph{non-smoothness}. Let $M^{(i)}_{k,j} = \displaystyle \sup_{\yb \in \mathcal{B} \bkt{{Y_{k,j},\rho}}} \abs{F_a\bkt{\xb_i,\yb}}$,  $\rho \in (1,\alpha)^2$, for some $\alpha > 1$. Then by~\cref{eq1} the absolute error along the $i^{th}$ row of the matrix $K_{k,j}$ is given by
    \begin{equation} \label{app_ed2}
        \Bigl\lvert \bkt{K_{k,j} \bkt{i,:}-\Tilde{K}_{k,j} \bkt{i,:}} \Bigl\lvert \leq 4 M^{(i)}_{k,j}  V_2 \frac{\rho^{-p_{k,j}}}{\rho -1}, \quad 1 \leq i \leq N, \quad 1 \leq k \leq \kappa, \quad 1 \leq j \leq 2^k 
    \end{equation}
    Let $M = \max \{ M^{(i)}_{k,j} \}$. Therefore, 
    \begin{equation}
        \magn{K_{k,j} - \Tilde{K}_{k,j}}_{max} \leq 4 M  V_2 \frac{\rho^{-p_{k,j}}}{\rho -1} \implies \dfrac{\magn{K_{k,j} - \Tilde{K}_{k,j}}_{max}}{\magn{K}_{max}} \leq \dfrac{4 M  V_2}{\magn{K}_{max}} \frac{\rho^{-p_{k,j}}}{\rho -1} = c \frac{\rho^{-p_{k,j}}}{\rho -1}
    \end{equation}
    where $c = \dfrac{4 M  V_2}{\magn{K}_{max}} $. Now choosing $p_{k,j}$ such that the above error is less than $\delta_1$ (for some $\delta_1>0$), we obtain \\ $p_{k,j}  = \ceil{\frac{ \log \bkt{\frac{c}{\delta_1 (\rho-1)}}}{\log(\rho)}} \implies  \dfrac{\magn{K_{k,j} - \Tilde{K}_{k,j}}_{max}}{\magn{K}_{max}} < \delta_1$ with rank of $\Tilde{K}_{k,j} $ is $ (1 + p_{k,j})$. Let $p_{l,m} = \max \{p_{k,j} : k=1,2,\hdots, \kappa \text{ and } j = 1,2, \hdots, 2^k \}$, which corresponds to $\Tilde{K}_{l,m}$.
    Note that the rank of $K_{\kappa +1}$ is $\sqrt{N}$. So, the rank of $\Tilde{K}$ is bounded above by
    \begin{align} \label{eq2}
        \begin{split}
            \sqrt{N} + \sum_{k=1}^{\kappa} 2^k \bkt{1 + \ceil{\frac{ \log \bkt{\frac{c}{\delta_1 (\rho-1)}}}{\log(\rho)}}} = \sqrt{N} + \bkt{2^{\kappa+1}-2}\bkt{1 + \ceil{\frac{ \log \bkt{\frac{c}{\delta_1 (\rho-1)}}}{\log(\rho)}}} \\ = \sqrt{N} + \bkt{2\sqrt{N}-2}\bkt{1 + \ceil{\frac{ \log \bkt{\frac{c}{\delta_1 (\rho-1)}}}{\log(\rho)}}}
        \end{split}
    \end{align}
     Therefore, the rank of $\Tilde{K} \in \mathcal{O}\bkt{\sqrt{N}\log \bkt{\frac{1}{\delta_1}}}$
    and the absolute error is given by
    \begin{equation}
         \magn{K - \Tilde{K}}_{max} <  \sum_{k=1}^{\kappa} 2^k \delta_1 \magn{K}_{max} = 2\bkt{2^{\kappa}-1} \delta_1 \magn{K}_{max} =\bkt{2^{\kappa +1}-2} \delta_1 \magn{K}_{max} < 2 \sqrt{N} \delta_1 \magn{K}_{max}, 
    \end{equation}
    as $\kappa \sim \log_4(N)$. If we choose $\delta_1 = \frac{\delta}{2 \sqrt{N}}$ then
    $\dfrac{\magn{K - \Tilde{K}}_{max}}{\magn{K}_{max}} < \delta $ and the rank of $\Tilde{K} \in$ $\mathcal{O}\bkt{\sqrt{N}\bkt{\log\bkt{\frac{2\sqrt{N}}{\delta}}}}$
    
    Hence, the rank of $\Tilde{K}$ scales $\mathcal{O}\bkt{\sqrt{N}\log\bkt{\frac{N}{\delta}}}$ with
    $\dfrac{\magn{K - \Tilde{K}}_{max}}{\magn{K}_{max}} < \delta$. The numerical rank plots of the edge-sharing interaction of the eight functions as described in~\Cref{Preliminaries} are shown in~\cref{fig:res_2d_edge_log} and tabulated in~\cref{tab:res_2d_edge_log}.

\cmt{
    \begin{figure}[H]
    \centering
    \subfloat[][N vs Rank plot]{\includegraphics[width=0.5\columnwidth]{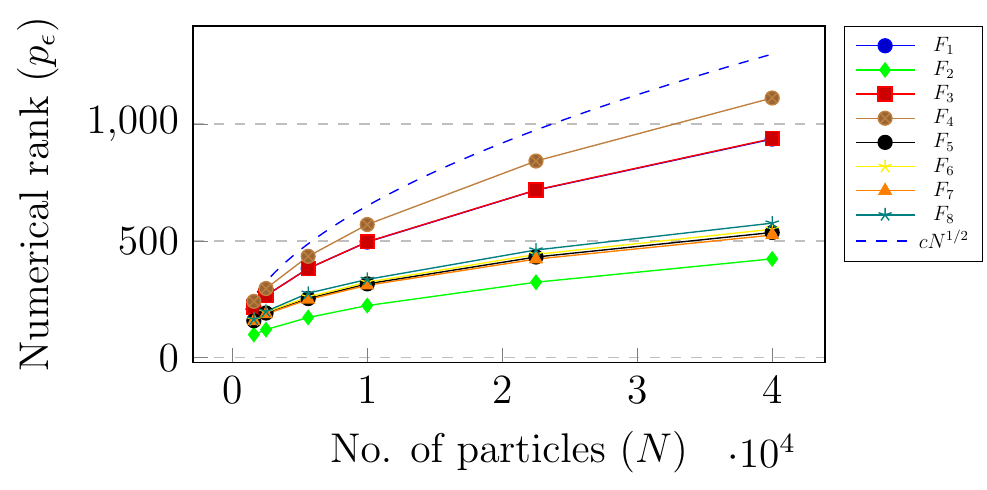}}\qquad
    \subfloat[][Numerical rank of different kernel functions]
    {\adjustbox{width=0.4\columnwidth,valign=B,raise=1.4\baselineskip}{%
      \renewcommand{\arraystretch}{1}%
        \begin{tabular}{c|cccc}\hline
            $N$ & $F_1(x,y)$ & $F_2(x,y)$ & $F_3(x,y)$ & $F_4(x,y)$ \\ \hline
            1600 & 216 & 99	& 220 & 241 \\
            2500 & 266 & 120 & 269 & 296 \\
            5625 & 382 & 172 & 388 & 434 \\
            10000 & 495	& 223 & 502 & 570 \\
            22500 & 717	& 323 & 727 & 842 \\
            40000  & 936 & 423 & 949 & 1112 \\ \hline
        \end{tabular}}
    }
    \caption{Numerical rank $\bkt{tol = 10^{-12}}$ of edge sharing interaction in $2$D}
    \label{result_2d_edge}
    \end{figure}
}

\section{Rank growth of different of interactions in 3D} \label{3d}
    We discuss the rank of far-field, vertex-sharing, edge sharing and face-sharing interactions as shown in~\cref{3d_inter}. $I$, V, $E$ and $F$ be the far-field, vertex-sharing, edge sharing and face-sharing domains of the cube $Y$ respectively.
    \begin{figure}[H]
    \centering  \subfloat[Different interactions of $Y$]{\label{3d_inter}\resizebox{5cm}{!}{
     	\begin{tikzpicture} 
        \cube{0}{0}
        \cube{-1}{0}
        \cube{-2}{0}
        \cube{1+\iso}{\iso}
        \cube{1+\iso}{1+\iso}
        \draw [ultra thick] (-2,1) -- (-2+\iso,1+\iso) -- (1+\iso,1+\iso);
        \draw [ultra thick] (1,0) -- (1+\iso,\iso);
        \draw [ultra thick] (2+\iso,\iso) -- (2+2*\iso,2*\iso) -- (2+2*\iso,2+2*\iso) -- (1+2*\iso,2+2*\iso) -- (1+\iso,2+\iso);
    
        \node at (0.7,0.7) {$Y$};
        \node at (-0.3,0.7) {$F$};
        \node at (-1.3,0.7) {$I$};
        \node at (1.7+\iso,0.7+\iso) {$E$};
        \node at (1.7+\iso,1.7+\iso) {V};
        \end{tikzpicture}
        }}
        \qquad \qquad \qquad
        \centering   \subfloat[Far-field cubes]{\label{3d_far}\resizebox{5cm}{!}{
                    	\begin{tikzpicture}[scale=0.5]
         				\draw (2,2,0)--(0,2,0)--(0,2,2)--(2,2,2)--(2,2,0)--(2,0,0)--(2,0,2)--(0,0,2)--(0,2,2);
         		     	\draw
         		     	(2,2,2)--(2,0,2);
         		     	\draw
         		     	(2,0,0)--(0,0,0)--(0,2,0);
         		     	\draw
         		     	(0,0,0)--(0,0,2);
         			    \node at (1,1,1) {$X$};
         		
                        \draw
                        (4,0,2) rectangle (6,2,2);
                        \draw
                        (4,0,0) rectangle (6,2,0);
                        \draw
                        (6,2,0)--(6,2,2);
                        \draw
                        (6,0,0)--(6,0,2);
                        \draw (4,0,0) -- (4,0,2);
                        \draw (4,2,0) -- (4,2,2);
                        \node at (5,1,1) {$Y$};
         			\end{tikzpicture}
        }}
        \caption{Different interactions in 3D}
    \end{figure}

    \subsection{\textbf{\textit{Rank growth of far-field domains}}} Let $X$ and $Y$ be cubes of size \(r\) separated by a distance \(r\) as shown in ~\cref{3d_far}. We choose a $\bkt{p+1}\times\bkt{p+1}\times\bkt{p+1}$ tensor product grid of Chebyshev nodes in the box $Y$ to obtain the polynomial interpolation of $F(\xb,\yb)$ along $\yb$. Let $M_i = \displaystyle \sup_{\yb \in \mathcal{B}\bkt{{Y,\rho}}} \abs{F_a\bkt{\xb_i,\yb}}$ ,  $\rho \in (1,\alpha)^3$, for some $\alpha >1$. Let $\Tilde{K}$ be the approximation of the matrix \(K\). Then by~\cref{eq1} the absolute error along the $i^{th}$ row is given by (MATLAB notation)
    \begin{equation} \label{ap_3dfar}
        \Bigl\lvert \bkt{K \bkt{i,:}-\Tilde{K} \bkt{i,:}} \Bigl\lvert \leq 4 M_i V_3 \frac{\rho^{-p}}{\rho -1}, \quad 1 \leq i \leq N
    \end{equation}
        Let $M = \displaystyle \max_{1 \leq i\leq N} \{M_i\}$. Therefore,
         \begin{equation}
             \magn{K  - \Tilde{K}}_{max} \leq 4 M V_3 \dfrac{\rho^{-p}}{\rho -1} 
             \implies \dfrac{\magn{K  - \Tilde{K}}_{max}}{\magn{K}_{max}} \leq \dfrac{4M V_3}{\magn{K}_{max}} \dfrac{\rho^{-p}}{\rho -1} = \dfrac{c\rho^{-p}}{\rho -1}
         \end{equation}
    where $c = \dfrac{4M V_3}{\magn{K}_{max}}$. Now, choosing $p$ such that the above relative error to be less than $\delta$ (for some $\delta >0$), we obtain
        $p = \ceil{\frac{ \log \bkt{\frac{c}{\delta (\rho-1)}}}{\log(\rho)}} \implies \dfrac{\magn{K  - \Tilde{K}}_{max}}{\magn{K}_{max}} < \delta$.
Since, the rank of $\Tilde{K} = (p+1)^3$, i.e., the rank is bounded above by $\bkt{1+\ceil{\frac{\log\bkt{\frac{c}{\delta(\rho-1)}}}{\log(\rho)}}}^3$.
        Therefore, the rank of $\Tilde{K}$ scales
        $\mathcal{O}\bkt{ \log^3 \bkt{\frac{1}{\delta}}}$ with $ \dfrac{\magn{K  - \Tilde{K}}_{max}}{\magn{K}_{max}} < \delta$. 
The numerical rank plots of the far-field interaction of the eight functions as described in~\Cref{Preliminaries} are shown in~\cref{fig:res_3d_far_log} and tabulated in~\cref{tab:res_3d_far_log}.

\cmt{
\begin{figure}[H]
    \centering
    \subfloat[][N vs Rank plot]{\includegraphics[width=0.5\columnwidth]{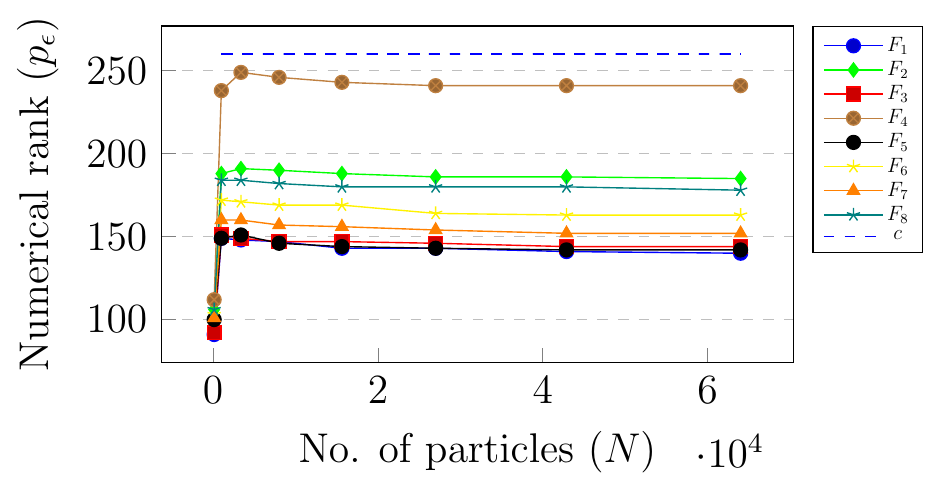}}\qquad
    \subfloat[][Numerical rank of different kernel functions]
    {\adjustbox{width=0.4\columnwidth,valign=B,raise=1.4\baselineskip}{%
      \renewcommand{\arraystretch}{1}%
        \begin{tabular}{c|cccc}\hline
            $N$ & $F_1(x,y)$ & $F_2(x,y)$ & $F_3(x,y)$ & $F_4(x,y)$ \\ \hline
            1000 & 149 & 188 & 163 & 238 \\
            3375 & 148 & 191 & 160 & 249 \\
            8000 & 147 & 190 & 158 & 246 \\
            15625 & 143	& 188 & 156 & 243 \\
            27000 & 143	& 186 & 156 & 241 \\
            42875  & 141 & 186 & 154 & 241 \\
            64000  & 140 & 185 & 154 & 241 \\
            \hline
        \end{tabular}}
    }
    \caption{Numerical rank $\bkt{tol = 10^{-12}}$ of far-field interaction in $3$D}
    \label{result_3d_far}
\end{figure}
}

    \subsection{\textbf{\textit{Rank growth of vertex-sharing domains}}}
        Consider two vertex-sharing cubes \(X\) (big red cube) and \(Y\) (big black cube) as shown in~\cref{3d_ver}. The black cube $Y$ is hierarchically sub-divided using an adaptive oct tree as shown in~\cref{3d_ver}. \href{https://sites.google.com/view/dom3d/vertex-sharing-domains}{\textbf{The link here provides a better $3$D view}}. So, we can write $Y$ as

        \begin{equation}
            Y = \overbrace{Y_2 \bigcup_{j=1}^{2^3 -1}Y_{1,j}}^{\text{At level 1}} =  \dots =  \overbrace{Y_{\kappa +1} \bigcup_{k=1}^{\kappa} \bigcup_{j=1}^{7} Y_{k,j}}^{\text{At level } \kappa}
        \end{equation}
        where $\kappa \sim \log_8(N)$ and $Y_{\kappa +1}$ having one particle. Let $\Tilde{K}$ be the approximation of the kernel matrix \(K\).
        Let $\Tilde{K}_{k,j}$ be approximation of ${K_{k,j}}$, then approximation of the kernel matrix $K$ is given by
        \begin{equation}
            \Tilde{K} = \sum_{k=1}^{\kappa} \sum_{j=1}^{7} \Tilde{K}_{k,j} + K_{\kappa +1}
        \end{equation}
        We choose a $\bkt{p_{k,j}+1}\times\bkt{p_{k,j}+1}\times\bkt{p_{k,j}+1}$ tensor product grid on Chebyshev nodes in the cube $Y_{k,j}$ to obtain the polynomial interpolation of $F(\xb,\yb)$ along $\yb$. Let $M^{(i)}_{k,j} = \displaystyle \sup_{\yb \in \mathcal{B} \bkt{{Y_{k,j},\rho}}} \abs{F_a\bkt{\xb_i,\yb}}$,  $\rho \in (1,\alpha)^3$, for some $\alpha > 1$. Then using~\cref{eq1} the absolute error along the $i^{th}$ row of the matrix $K_{k,j}$ is given by (MATLAB notation)
        \begin{equation} 
            \Bigl\lvert \bkt{K_{k,j} \bkt{i,:}-\Tilde{K}_{k,j} \bkt{i,:} } \Bigl\lvert \leq 4 M^{(i)}_{k,j} V_3 \frac{\rho^{-p_{k,j}}}{\rho -1}, \quad 1 \leq i \leq N, \quad 1 \leq k \leq \kappa, \quad 1 \leq j \leq 7
        \end{equation}
        $M = \max \{  M^{(i)}_{k,j} \}$. Therefore,
        \begin{equation}
            \magn{K_{k,j} - \Tilde{K}_{k,j} }_{max} \leq 4 M V_3 \frac{\rho^{-p_{k,j}}}{\rho -1} \implies \dfrac{\magn{K_{k,j} - \Tilde{K}_{k,j} }_{max}}{\magn{K}_{max}} \leq \dfrac{4M V_3}{\magn{K}_{max}} \frac{\rho^{-p_{k,j}}}{\rho -1} = c \frac{\rho^{-p_{k,j}}}{\rho -1} 
        \end{equation}
        where $c = \dfrac{4M V_3}{\magn{K}_{max}}$. Now choosing $p_{k,j}$ such that the above error is less than $\delta_1$ (for some $\delta_1>0$), we obtain \\ 
        $p_{k,j} =  \ceil{\frac{ \log \bkt{\frac{c}{\delta_1 (\rho-1)}}}{\log(\rho)}} \implies  \dfrac{\magn{K_{k,j} - \Tilde{K}_{k,j} }_{max}}{\magn{K}_{max}} < \delta_1$ with rank of $\Tilde{K}_{k,j} $ is $ (1 + p_{k,j})^3$. \\
    Let $p_{l,m} = \max \{p_{k,j} : k=1,2,\hdots, \kappa \text{ and } j = 1,2,\hdots,7 \}$, which corresponds to $\Tilde{K}_{l,m}$.
        So, at level $k$ the absolute error of matrix $K_k$ is
        \begin{align}
            \magn{K_k - \Tilde{K}_k}_{max} = \magn{\sum_{j=1}^{7}  \bkt{K_{k,j} -\Tilde{K}_{k,j} }}_{max} \leq \sum_{j=1}^{7} \magn{\bkt{K_{k,j} -\Tilde{K}_{k,j}}}_{max} < 7 \delta_1 \magn{K}_{max}
        \end{align}
    with rank of $\Tilde{K}_k$ is bounded above by $7 (1 + p_{l,m})^3$.
        Hence, we get 
        \begin{gather}
            \magn{K - \Tilde{K}}_{max} = \magn{\dsum_{k=1}^\kappa \bkt{K_k - \Tilde{K}_k} }_{max} \leq \dsum_{k=1}^\kappa \magn{K_k - \Tilde{K}_k}_{max} < 7 \kappa \delta_1 \magn{K}_{max}
        \end{gather}
        and the rank of $\Tilde{K} $ is bounded above by $ 1+7 \kappa (1 + p_{l,m})^3 = 1+7 \kappa \bkt{1 + \ceil{\frac{ \log \bkt{\frac{c}{\delta_1 (\rho-1)}}}{\log(\rho)}}}^3$. Note that, the rank of $K_{\kappa +1}$ is one.
        If we choose $\delta_1 = \dfrac{\delta}{7 \kappa}$, then $\dfrac{\magn{K - \Tilde{K}}_{max}}{\magn{K}_{max}} < \delta$ with rank of $\Tilde{K} $ bounded above by $ 1 + 7 \kappa \bkt{1+\ceil{\frac{ \log \bkt{\frac{7 \kappa c}{\delta (\rho-1)}}}{\log(\rho)}}}^3 = 1 + 7 \log_8(N) \bkt{1+\ceil{\frac{ \log \bkt{\frac{7 \log_8(N) c}{\delta (\rho-1)}}}{\log(\rho)}}}^3$.
        
        Therefore, the rank of $\Tilde{K}$ scales $\mathcal{O}\bkt{\log_8(N)\log^3 \bkt{\frac{ \log_8(N)}{\delta}}}$ with
        $\dfrac{\magn{K - \Tilde{K}}_{max}}{\magn{K}_{max}} < \delta$. The numerical rank plots of the vertex-sharing interaction of the eight functions as described in~\Cref{Preliminaries} are shown in~\cref{fig:res_3d_ver_log} and tabulated in~\cref{tab:res_3d_ver_log}.
\cmt{    
\begin{figure}[H]
    \centering
    \subfloat[][N vs Rank plot]{\includegraphics[width=0.5\columnwidth]{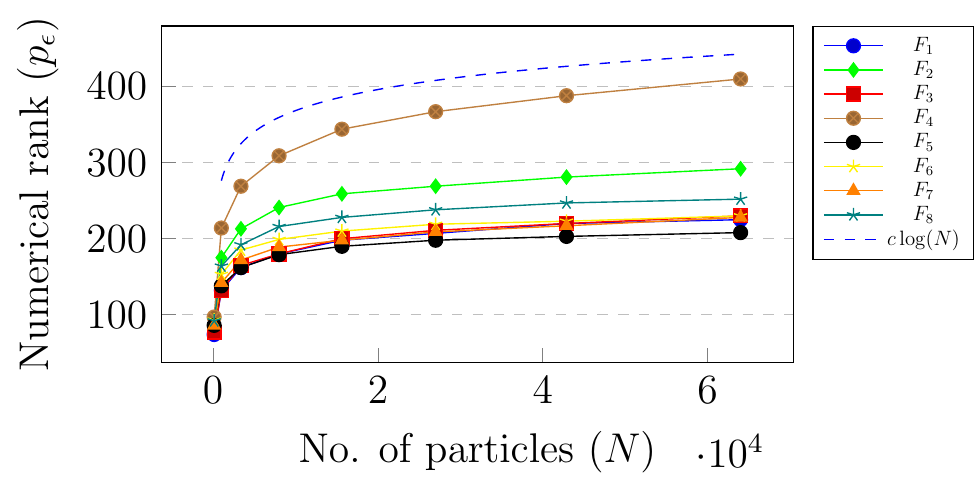}}\qquad
    \subfloat[][Numerical rank of different kernel functions]
    {\adjustbox{width=0.4\columnwidth,valign=B,raise=1.4\baselineskip}{%
      \renewcommand{\arraystretch}{1}%
        \begin{tabular}{c|cccc}\hline
            $N$ & $F_1(x,y)$ & $F_2(x,y)$ & $F_3(x,y)$ & $F_4(x,y)$ \\ \hline
            1000 & 132 & 175 & 150 & 214 \\
            3375 & 162 & 213 & 180 & 269 \\
            8000 & 180 & 241 & 205 & 309 \\
            15625 & 198	& 259 & 219 & 344 \\
            27000 & 207	& 269 & 231 & 367 \\
            42875  & 220 & 281 & 246 & 388 \\
            64000  & 225 & 292 & 253 & 410 \\
            \hline
        \end{tabular}}
    }
    \caption{Numerical rank $\bkt{tol = 10^{-12}}$ of vertex-sharing interaction in $3$D}
    \label{result_3d_ver}
\end{figure} }       
    \subsection{\textbf{\textit{Rank growth of edge sharing domains}}}
        Consider edge sharing cubes \(X\) (big red cube) and \(Y\) (big black cube) as shown in~\cref{3d_edg}. The black cube $Y$ is hierarchically sub-divided using an adaptive oct tree as shown in~\cref{3d_edg}. \href{https://sites.google.com/view/dom3d/edge-sharing-domains}{\textbf{The link here provides a better $3$D view}}. So, we can write $Y$ as
        
        \begin{equation}
            Y =  \overbrace{ Y_2 \bigcup_{j=1}^{3 \cdot 2^1} Y_{1,j}}^{\text{At level 1}} =\dots = \overbrace{Y_{\kappa+1} \bigcup_{k=1}^{\kappa} \bigcup_{j=1}^{3 \cdot 2^k} Y_{k,j}}^{\text{At level }\kappa}
        \end{equation}
    where $\kappa \sim \log_8(N)$ and $Y_{\kappa +1}$ having $2^{\kappa} = N^{1/3}$ particles. Let $\Tilde{K}_{k,j}$ be approximation of ${K_{k,j}}$, then approximation of matrix $K$ is given by
    \begin{equation}
       \Tilde{K} = \dsum_{k=1}^{\kappa} \dsum_{j=1}^{3 \cdot 2^k} \Tilde{K}_{k,j} + K_{\kappa +1} 
    \end{equation}
   We choose a $\bkt{p_{k,j}+1}\times\bkt{p_{k,j}+1}$ tensor product grid on Chebyshev nodes in the cube $Y_{k,j}$ to obtain the polynomial interpolation of $F(\xb,\yb)$ along $\yb$ by interpolating the function only along the direction of \emph{non-smoothness}. Let $M^{(i)}_{k,j} = \displaystyle \sup_{\yb \in \mathcal{B} \bkt{{Y_{k,j},\rho}}} \abs{F_a\bkt{\xb_i,\yb}}$,  $\rho \in (1,\alpha)^3$, for some $\alpha > 1$. Then by~\cref{eq1} the absolute error along the $i^{th}$ row of the matrix $K_{k,j}$ is given by
    \begin{equation} 
        \Bigl\lvert \bkt{K_{k,j} \bkt{i,:}-\Tilde{K}_{k,j} \bkt{i,:}} \Bigl\lvert \leq 4 M^{(i)}_{k,j}  V_3 \frac{\rho^{-p_{k,j}}}{\rho -1}, \quad 1 \leq i \leq N, \quad 1 \leq k \leq \kappa, \quad 1 \leq j \leq 3 \cdot 2^k 
    \end{equation}
    Let $M = \max \{ M^{(i)}_{k,j} \}$. Therefore, 
        \begin{equation}
            \magn{K_{k,j} - \Tilde{K}_{k,j} }_{max} \leq 4 M V_3 \frac{\rho^{-p_{k,j}}}{\rho -1} \implies \dfrac{\magn{K_{k,j} - \Tilde{K}_{k,j} }_{max}}{\magn{K}_{max}} \leq \dfrac{4M V_3}{\magn{K}_{max}} \frac{\rho^{-p_{k,j}}}{\rho -1} = c \frac{\rho^{-p_{k,j}}}{\rho -1} 
        \end{equation}
        where $c = \dfrac{4M V_3}{\magn{K}_{max}}$. Now choosing $p_{k,j}$ such that the above error is less than $\delta_1$ (for some $\delta_1>0$), we obtain \\ $p_{k,j}  = \ceil{\frac{ \log \bkt{\frac{c}{\delta_1 (\rho-1)}}}{\log(\rho)}} \implies \dfrac{\magn{K_{k,j} - \Tilde{K}_{k,j} }_{max}}{\magn{K}_{max}} < \delta_1$ with rank of $\Tilde{K}_{k,j} $ is $ (1 + p_{k,j})^2$. Let $p_{l,m} = \max \{p_{k,j} : k=1,2,\hdots, \kappa \text{ and } j = 1,2, \hdots, 3 \cdot 2^k \}$, which corresponds to $\Tilde{K}_{l,m}$.
    Note that the rank of $K_{\kappa +1}$ is $N^{\frac{1}{3}}$. So, the rank of $\Tilde{K}$ is bounded above by
    \begin{align} \label{eq3}
        \begin{split}
            N^{\frac{1}{3}} + \sum_{k=1}^{\kappa} 3 \cdot 2^k \bkt{1 + \ceil{\frac{ \log \bkt{\frac{c}{\delta_1 (\rho-1)}}}{\log(\rho)}}}^2  = N^{\frac{1}{3}} +  3 \bkt{2^{\kappa+1}-2}\bkt{1 + \ceil{\frac{ \log \bkt{\frac{c}{\delta_1 (\rho-1)}}}{\log(\rho)}}}^2 \\ = N^{\frac{1}{3}} + 3 \bkt{2 N^{\frac{1}{3}}-2}\bkt{1 + \ceil{\frac{ \log \bkt{\frac{c}{\delta_1 (\rho-1)}}}{\log(\rho)}}}^2 
        \end{split}
    \end{align}
    Therefore, the rank $\Tilde{K} \in \mathcal{O}\bkt{N^{\frac{1}{3}}\bkt{\log^2 \bkt{\frac{1}{\delta_1}}}}$ and the absolute error is given by
    \begin{equation}
         \magn{K- \Tilde{K}}_{max} <  \sum_{k=1}^{\kappa} 3 \cdot 2^k \delta_1 \magn{K}_{max} = 3 \bkt{2^{\kappa +1}-2} \delta_1 \magn{K}_{max} < 6 N^{\frac{1}{3}} \delta_1 \magn{K}_{max}  ,\text{ as }  \kappa \sim \log_8(N)
    \end{equation}
    If we choose $\delta_1 = \frac{\delta}{6 N^{\frac{1}{3}}}$ then
        $ \dfrac{\magn{K- \Tilde{K}}_{max}}{\magn{K}_{max}} < \delta $ and the rank of $\Tilde{K} \in$ $\mathcal{O}\bkt{N^{1/3} \bkt{\log^2 \bkt{\frac{6N^{1/3}}{\delta}}}}$
        
    Hence, the rank of $\Tilde{K}$ scales $\mathcal{O}\bkt{N^{1/3} \log^2\bkt{ \frac{N}{\delta}}}$ with 
    $\dfrac{\magn{K- \Tilde{K}}_{max}}{\magn{K}_{max}} < \delta$. The numerical rank plots of the edge sharing interaction of the eight functions as described in~\Cref{Preliminaries} are shown in~\cref{fig:res_3d_edge_log} and tabulated in~\cref{tab:res_3d_edge_log}.

\cmt{
\begin{figure}[H]
    \centering
    \subfloat[][N vs Rank plot]{\includegraphics[width=0.5\columnwidth]{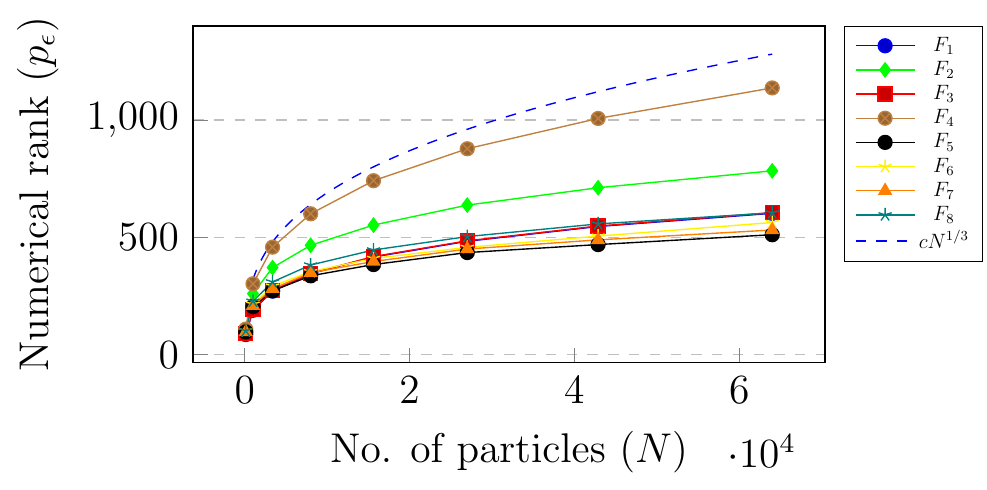}}\qquad
    \subfloat[][Numerical rank of different kernel functions]
    {\adjustbox{width=0.4\columnwidth,valign=B,raise=1.4\baselineskip}{%
      \renewcommand{\arraystretch}{1}%
        \begin{tabular}{c|cccc}\hline
            $N$ & $F_1(x,y)$ & $F_2(x,y)$ & $F_3(x,y)$ & $F_4(x,y)$ \\ \hline
            1000 & 189 & 260 & 202 & 302 \\
            3375 & 271 & 371 & 289 & 458 \\
            8000 & 345 & 466 & 366 & 600 \\
            15625 & 416	& 552 & 442 & 741 \\
            27000 & 483	& 637 & 512 & 877 \\
            42875  & 546 & 711 & 580 & 1006 \\
            64000  & 602 & 783 & 649 & 1136 \\
            \hline
        \end{tabular}}
    }
    \caption{Numerical rank $\bkt{tol = 10^{-12}}$ of edge-sharing interaction in $3$D}
    \label{result_3d_edge}
\end{figure}  }      
    \subsection{\textbf{\textit{Rank growth of face-sharing domains}}}
        Consider two face-sharing cubes \(X\) (big red cube) and \(Y\) (big black cube) as shown in~\cref{3d_fac}. The black cube $Y$ is hierarchically subdivided using an adaptive oct tree as shown~\cref{3d_fac}. \href{https://sites.google.com/view/dom3d/face-sharing-domains}{\textbf{The link here provides a better $3$D view}}. So, we can write $Y$ as




    
        
        \begin{equation}
            Y =  \overbrace{Y_2  \bigcup_{j=1}^{4} Y_{1,j}}^{\text{At level 1}} = \dots = \overbrace{Y_{\kappa +1} \bigcup_{k=1}^{\kappa} \bigcup_{j=1}^{4^k} Y_{k,j}}^{\text{At level } \kappa}
        \end{equation}
       where $\kappa \sim \log_8(N)$ and $Y_{\kappa +1}$ having $4^{\kappa} = N^{2/3}$ particles. Let $\Tilde{K}_{k,j}$ be approximation of ${K_{k,j}}$, then approximation of matrix $K$ is
       \begin{equation}
           \Tilde{K} = \dsum_{k=1}^{\kappa} \dsum_{j=1}^{4^k} \Tilde{K}_{k,j} + K_{\kappa +1}
       \end{equation}
         We choose a $\bkt{p_{k,j}+1}$ tensor product grid on Chebyshev nodes in the cube $Y_{k,j}$ to obtain the polynomial interpolation of $F(\xb,\yb)$ along $\yb$ by interpolating the function only along the direction of \emph{non-smoothness}. Let $M^{(i)}_{k,j} = \displaystyle \sup_{\yb \in \mathcal{B} \bkt{{Y_{k,j},\rho}}} \abs{F_a\bkt{\xb_i,\yb}}$,  $\rho \in (1,\alpha)^3$, for some $\alpha > 1$. Then by~\cref{eq1} the absolute error along the $i^{th}$ row of the matrix $K_{k,j}$ is given by
    \begin{equation} 
        \Bigl\lvert \bkt{K_{k,j} \bkt{i,:}-\Tilde{K}_{k,j} \bkt{i,:}} \Bigl\lvert \leq 4 M^{(i)}_{k,j}  V_3 \frac{\rho^{-p_{k,j}}}{\rho -1}, \quad 1 \leq i \leq N, \quad 1 \leq k \leq \kappa, \quad 1 \leq j \leq 4^k 
    \end{equation}
    Let $M = \max \{ M^{(i)}_{k,j} \}$. Therefore, 
        \begin{equation}
            \magn{K_{k,j} - \Tilde{K}_{k,j} }_{max} \leq 4 M V_3 \frac{\rho^{-p_{k,j}}}{\rho -1} \implies \dfrac{\magn{K_{k,j} - \Tilde{K}_{k,j} }_{max}}{\magn{K}_{max}} \leq \dfrac{4M V_3}{\magn{K}_{max}} \frac{\rho^{-p_{k,j}}}{\rho -1} = c \frac{\rho^{-p_{k,j}}}{\rho -1} 
        \end{equation}
        where $c = \dfrac{4M V_3}{\magn{K}_{max}}$. Now choosing $p_{k,j}$ such that the above error is less than $\delta_1$ (for some $\delta_1>0$), we obtain \\ $p_{k,j}  = \ceil{\frac{ \log \bkt{\frac{c}{\delta_1 (\rho-1)}}}{\log(\rho)}} \implies \dfrac{\magn{K_{k,j} - \Tilde{K}_{k,j} }_{max}}{\magn{K}_{max}} < \delta_1$ with rank of $\Tilde{K}_{k,j} $ is $ (1 + p_{k,j})$. Let $p_{l,m} = \max \{p_{k,j} : k=1,2,\hdots, \kappa \text{ and } j = 1,2, \hdots, 4^k \}$, which corresponds to $\Tilde{K}_{l,m}$.
    Note that the rank of $K_{\kappa +1}$ is $N^{\frac{2}{3}}$. So, the rank of $\Tilde{K}$ is bounded above by
    \begin{align} \label{eq4}
        \begin{split}
            N^{\frac{2}{3}} + \sum_{k=1}^{\kappa} 4^k \bkt{1 + \ceil{\frac{ \log \bkt{\frac{c}{\delta_1 (\rho-1)}}}{\log(\rho)}}} = N^{\frac{2}{3}} +  \frac{4}{3}\bkt{4^\kappa-1} \bkt{1 + \ceil{\frac{ \log \bkt{\frac{c}{\delta_1 (\rho-1)}}}{\log(\rho)}}} \\ = N^{\frac{2}{3}} + \frac{4}{3}\bkt{N^\frac{2}{3}-1} \bkt{1 + \ceil{\frac{ \log \bkt{\frac{c}{\delta_1 (\rho-1)}}}{\log(\rho)}}} 
        \end{split}
    \end{align}
  Therefore, the rank of $\Tilde{K} \in \mathcal{O}\bkt{N^{\frac{2}{3}}\log\bkt{\frac{1}{\delta_1}}}$  and the error in \mvp
    \begin{equation}
         \magn{K - \Tilde{K}}_{max} <  \sum_{k=1}^{\kappa} 4^k \delta_1 \magn{K}_{max} = \frac{4}{3}\bkt{4^{\kappa}-1} \delta_1 \magn{K}_{max}=\frac{4}{3}\bkt{N^\frac{2}{3}-1} \delta_1 \magn{K}_{max} < \dfrac{4}{3} N^{\frac{2}{3}} \delta_1  \magn{K}_{max},
    \end{equation}
    as $\kappa \sim \log_8(N)$.
    If we choose $\delta_1 =\frac{3 \delta}{4 N^{\frac{2}{3}}}$ then
        $ \dfrac{\magn{K - \Tilde{K}}_{max}}{\magn{K}_{max}} < \delta $ and the rank of $\Tilde{K} \in$ $\mathcal{O}\bkt{N^{2/3} \bkt{\log\bkt{\frac{4N^{2/3}}{3 \delta}}}}$
        
    Hence, the rank of $\Tilde{K}$ scales $\mathcal{O}\bkt{N^{2/3}\log\bkt{\frac{N}{\delta}}}$ with 
    $\dfrac{\magn{K - \Tilde{K}}_{max}}{\magn{K}_{max}} < \delta$. The numerical rank plots of the face-sharing interaction of the eight functions as described in~\Cref{Preliminaries} are shown in~\cref{fig:res_3d_face_log,tab:res_3d_face_log}.
\cmt{
\begin{figure}[H]
    \centering
    \subfloat[][N vs Rank plot]{
        \includegraphics[width=0.4\columnwidth]{Images/3d_face.pdf}
    }\qquad
    \subfloat[][Numerical rank of different kernel functions]
    {\adjustbox{width=0.4\columnwidth,valign=B,raise=1.4\baselineskip}{%
      \renewcommand{\arraystretch}{1}%
        \begin{tabular}{c|cccc}\hline
            $N$ & $F_1(x,y)$ & $F_2(x,y)$ & $F_3(x,y)$ & $F_4(x,y)$ \\ \hline
            1000 & 312 & 422 & 315 & 465 \\
            3375 & 610 & 815 & 614 & 964 \\
            8000 & 1003 & 1314 & 1012 & 1623 \\
            15625 & 1491 & 1931 & 1503 & 2443 \\
            27000 & 2077 & 2641 & 2092 & 3419 \\
            42875  & 2764 & 3465 & 2781 & 4556 \\
            64000  & 3547 & 4373 & 3564 & 5751 \\
            \hline
        \end{tabular}}
    }
    \caption{Numerical rank $\bkt{tol = 10^{-12}}$ of face-sharing interaction in $3$D}
    \label{result_3d_face}
\end{figure}
}
\begin{table}[H]
    \centering \setlength\tabcolsep{0.1pt}
    \subfloat[$4$D far-field]{        \resizebox{0.4\textwidth}{!}{%
        \begin{tabular}{p{1cm}|cccccccc}\hline
            $N$ & $F_1(x,y)$ & $F_2(x,y)$ & $F_3(x,y)$ & $F_4(x,y)$ & $F_5(x,y)$ & $F_6(x,y)$ & $F_7(x,y)$ & $F_8(x,y)$\\ \hline
            1296  & 641 & 599 & 656  & 630  & 529 & 598 & 533 & 632 \\
            2401  & 791 & 735 & 814  & 791  & 605 & 709 & 628 & 772 \\
            4096  & 902 & 808 & 931  & 928  & 636 & 750 & 661 & 855 \\
            10000 & 990 & 836 & 1008 & 1034 & 650 & 776 & 685 & 894 \\
            20736 & 994 & 842 & 1012 & 1037 & 645 & 775 & 680 & 898 \\
            38416 & 985 & 836 & 1012 & 1035 & 635 & 765 & 670 & 899 \\
            50625 & 982 & 831 & 1009 & 1031 & 633 & 759 & 666 & 897\\ \hline
        \end{tabular}}} \label{tab:res_4d_far_log} \quad%
    \subfloat[$3$D far-field]{        \resizebox{0.4\textwidth}{!}{%
        \begin{tabular}{p{1cm}|cccccccc}\hline
            $N$ & $F_1(x,y)$ & $F_2(x,y)$ & $F_3(x,y)$ & $F_4(x,y)$ & $F_5(x,y)$ & $F_6(x,y)$ & $F_7(x,y)$ & $F_8(x,y)$\\ \hline
            125   & 91  & 106 & 92  & 112 & 100 & 103 & 101 & 106 \\
            1000  & 149 & 188 & 151 & 238 & 149 & 172 & 160 & 184 \\
            3375  & 148 & 191 & 149 & 249 & 151 & 171 & 160 & 184 \\
            8000  & 147 & 190 & 147 & 246 & 146 & 169 & 157 & 182 \\
            15625 & 143 & 188 & 147 & 243 & 144 & 169 & 156 & 180 \\
            27000 & 143 & 186 & 146 & 241 & 143 & 164 & 154 & 180 \\
            42875 & 141 & 186 & 144 & 241 & 142 & 163 & 152 & 180 \\
            64000 & 140 & 185 & 144 & 241 & 142 & 163 & 152 & 178 \\ \hline
        \end{tabular}}} \label{tab:res_3d_far_log} \quad
    \subfloat[$2$D far-field]{        \resizebox{0.4\textwidth}{!}{%
        \begin{tabular}{p{1cm}|cccccccc}\hline
            $N$ & $F_1(x,y)$ & $F_2(x,y)$ & $F_3(x,y)$ & $F_4(x,y)$ & $F_5(x,y)$ & $F_6(x,y)$ & $F_7(x,y)$ & $F_8(x,y)$\\ \hline
            1600  & 42 & 21 & 43 & 49 & 33 & 38 & 33 & 38 \\
            2500  & 42 & 21 & 42 & 49 & 32 & 38 & 33 & 38 \\
            5625  & 42 & 21 & 42 & 48 & 32 & 38 & 33 & 37 \\
            10000 & 42 & 21 & 42 & 48 & 32 & 37 & 33 & 37 \\
            22500 & 42 & 19 & 42 & 47 & 32 & 37 & 33 & 37 \\
            40000 & 42 & 19 & 42 & 47 & 31 & 37 & 33 & 37 \\ \hline
        \end{tabular}}} \label{tab:res_2d_far_log} \quad
    \subfloat[$1$D far-field]{        \resizebox{0.4\textwidth}{!}{%
        \begin{tabular}{p{1cm}|cccccccc}\hline
                $N$ & $F_1(x,y)$ & $F_2(x,y)$ & $F_3(x,y)$ & $F_4(x,y)$ & $F_5(x,y)$ & $F_6(x,y)$ & $F_7(x,y)$ & $F_8(x,y)$\\ \hline
                1000  & 7 & 7 & 7 & 8 & 2 & 2 & 6 & 1                         \\
                5000  & 7 & 7 & 7 & 8 & 2 & 2 & 6 & 1                         \\
                10000 & 7 & 7 & 7 & 8 & 2 & 2 & 6 & 1                         \\
                15000 & 7 & 7 & 7 & 8 & 2 & 2 & 6 & 1                         \\
                20000 & 7 & 7 & 7 & 8 & 2 & 2 & 6 & 1                         \\
                25000 & 7 & 7 & 7 & 8 & 2 & 2 & 6 & 1                         \\
                30000 & 7 & 7 & 7 & 8 & 2 & 2 & 6 & 1                         \\
                40000 & 7 & 7 & 7 & 8 & 2 & 2 & 6 & 1\\ \hline
        \end{tabular}}} \label{tab:res_1d_far_log} \quad
        \caption{Numerical rank $(p_{\epsilon})$ with $\bkt{\epsilon = 10^{-12}}$ of different kernel functions of far-field interaction in $4$D, $3$D, $2$D, $1$D}
\end{table}

\begin{table}[H]
       \centering         \setlength\tabcolsep{0.1pt}
        \subfloat[$4$D vertex-sharing]{\resizebox{0.4\textwidth}{!}{%
        \begin{tabular}{p{1cm}|cccccccc}\hline
            $N$ & $F_1(x,y)$ & $F_2(x,y)$ & $F_3(x,y)$ & $F_4(x,y)$ & $F_5(x,y)$ & $F_6(x,y)$ & $F_7(x,y)$ & $F_8(x,y)$\\ \hline
            1296  & 369 & 345 & 382 & 392                         & 288 & 339 & 295 & 380 \\
            2401  & 446 & 401 & 459 & 462                         & 326 & 383 & 337 & 427 \\
            4096  & 506 & 444 & 517 & 511                         & 360 & 425 & 366 & 479 \\
            10000 & 582 & 505 & 592 & 605                         & 404 & 481 & 420 & 537 \\
            20736 & 643 & 551 & 657 & 671                         & 437 & 516 & 458 & 600 \\
            38416 & 690 & 597 & 705 & 738                         & 472 & 547 & 485 & 642 \\
            50625 & 714 & 614 & 735 & 758                         & 486 & 566 & 496 & 662\\\hline
        \end{tabular}}} \label{tab:res_4d_ver_log} \quad
        \subfloat[$3$D vertex-sharing]{\resizebox{0.4\textwidth}{!}{
        \begin{tabular}{p{1cm}|cccccccc}\hline
            $N$ & $F_1(x,y)$ & $F_2(x,y)$ & $F_3(x,y)$ & $F_4(x,y)$ & $F_5(x,y)$ & $F_6(x,y)$ & $F_7(x,y)$ & $F_8(x,y)$\\ \hline
            125   & 74  & 93  & 77  & 97  & 86  & 93  & 86  & 92  \\
            1000  & 132 & 175 & 132 & 214 & 138 & 153 & 142 & 164 \\
            3375  & 162 & 213 & 165 & 269 & 162 & 185 & 172 & 192 \\
            8000  & 180 & 241 & 180 & 309 & 179 & 199 & 189 & 216 \\
            15625 & 198 & 259 & 200 & 344 & 190 & 210 & 198 & 228 \\
            27000 & 207 & 269 & 211 & 367 & 198 & 219 & 209 & 238 \\
            42875 & 220 & 281 & 220 & 388 & 203 & 223 & 217 & 247 \\
            64000 & 225 & 292 & 230 & 410 & 208 & 230 & 228 & 252 \\ \hline
        \end{tabular}}} \label{tab:res_3d_ver_log} \quad
        \subfloat[$2$D vertex-sharing]{\resizebox{0.4\textwidth}{!}{
        \begin{tabular}{p{1cm}|cccccccc}\hline
            $N$ & $F_1(x,y)$ & $F_2(x,y)$ & $F_3(x,y)$ & $F_4(x,y)$ & $F_5(x,y)$ & $F_6(x,y)$ & $F_7(x,y)$ & $F_8(x,y)$\\ \hline
            1600  & 81  & 34 & 82  & 94  & 59 & 64 & 59 & 66 \\
            2500  & 87  & 36 & 87  & 102 & 64 & 67 & 63 & 70 \\
            5625  & 96  & 39 & 97  & 114 & 67 & 72 & 66 & 74 \\
            10000 & 104 & 41 & 104 & 122 & 70 & 73 & 69 & 78 \\
            22500 & 112 & 44 & 113 & 135 & 72 & 77 & 71 & 81 \\
            40000 & 119 & 45 & 119 & 143 & 76 & 81 & 76 & 84\\  \hline
        \end{tabular}}} \label{tab:res_2d_ver_log} \quad
        \subfloat[$1$D vertex-sharing]{\resizebox{0.4\textwidth}{!}{
        \begin{tabular}{p{1cm}|cccccccc}\hline
            $N$ & $F_1(x,y)$ & $F_2(x,y)$ & $F_3(x,y)$ & $F_4(x,y)$ & $F_5(x,y)$ & $F_6(x,y)$ & $F_7(x,y)$ & $F_8(x,y)$\\ \hline
            1000  & 22 & 20 & 22 & 22 & 2 & 2 & 7 & 1 \\ 
            5000  & 27 & 23 & 27 & 26 & 2 & 2 & 7 & 1 \\
            10000 & 29 & 25 & 29 & 27 & 2 & 2 & 7 & 1 \\
            15000 & 30 & 26 & 30 & 28 & 2 & 2 & 7 & 1 \\
            20000 & 31 & 26 & 31 & 29 & 2 & 2 & 7 & 1 \\
            25000 & 31 & 27 & 31 & 30 & 2 & 2 & 7 & 1 \\
            30000 & 32 & 27 & 32 & 30 & 2 & 2 & 7 & 1 \\
            40000 & 33 & 27 & 33 & 31 & 2 & 2 & 7 & 1 \\ \hline
        \end{tabular}}} \label{tab:res_1d_vert_log}
        \caption{Numerical rank $(p_{\epsilon})$ with $\bkt{\epsilon = 10^{-12}}$ of different kernel functions of vertex-sharing interaction in $4$D, $3$D, $2$D, $1$D}
    \end{table}
\begin{table}[H]
       \centering         \setlength\tabcolsep{0.1pt}
        \subfloat[$4$D 1-hyper-surface sharing]{        \resizebox{0.4\textwidth}{!}{
        \begin{tabular}{p{1cm}|cccccccc}\hline
            $N$ & $F_1(x,y)$ & $F_2(x,y)$ & $F_3(x,y)$ & $F_4(x,y)$ & $F_5(x,y)$ & $F_6(x,y)$ & $F_7(x,y)$ & $F_8(x,y)$\\ \hline
            1296  & 482  & 450  & 496  & 492  & 378 & 426 & 376 & 460  \\
            2401  & 585  & 555  & 601  & 606  & 447 & 491 & 449 & 554  \\
            4096  & 693  & 629  & 705  & 706  & 495 & 560 & 502 & 633  \\
            10000 & 862  & 773  & 879  & 891  & 595 & 671 & 608 & 772  \\
            20736 & 1029 & 898  & 1044 & 1065 & 682 & 761 & 696 & 891  \\
            38416 & 1179 & 1008 & 1197 & 1226 & 743 & 840 & 767 & 994  \\
            50625 & 1251 & 1058 & 1269 & 1298 & 774 & 880 & 798 & 1031 \\ \hline
        \end{tabular}}} \label{tab:res_4d_h1_log} \quad
        \subfloat[$3$D 1-hyper-surface (edge) sharing]{        \resizebox{0.4\textwidth}{!}{
        \begin{tabular}{p{1cm}|cccccccc}\hline
            $N$ & $F_1(x,y)$ & $F_2(x,y)$ & $F_3(x,y)$ & $F_4(x,y)$ & $F_5(x,y)$ & $F_6(x,y)$ & $F_7(x,y)$ & $F_8(x,y)$\\ \hline
            125   & 86  & 104 & 89                          & 109  & 97  & 102 & 97  & 102 \\
            1000  & 189 & 260 & 191                         & 302  & 205 & 215 & 209 & 228 \\
            3375  & 271 & 371 & 273                         & 458  & 272 & 292 & 281 & 310 \\
            8000  & 345 & 466 & 346                         & 600  & 336 & 354 & 349 & 382 \\
            15625 & 416 & 552 & 418                         & 741  & 384 & 410 & 397 & 446 \\
            27000 & 483 & 637 & 485                         & 877  & 435 & 457 & 450 & 503 \\
            42875 & 546 & 711 & 548                         & 1006 & 469 & 505 & 489 & 557 \\
            64000 & 602 & 783 & 605                         & 1136 & 511 & 563 & 531 & 604 \\\hline
        \end{tabular}}} \label{tab:res_3d_edge_log} \quad
        \subfloat[$2$D 1-hyper-surface (edge) sharing]{        \resizebox{0.4\textwidth}{!}{
        \begin{tabular}{p{1cm}|cccccccc}\hline
            $N$ & $F_1(x,y)$ & $F_2(x,y)$ & $F_3(x,y)$ & $F_4(x,y)$ & $F_5(x,y)$ & $F_6(x,y)$ & $F_7(x,y)$ & $F_8(x,y)$\\ \hline
            1600  & 216 & 99  & 217 & 241  & 158 & 162 & 157 & 165 \\
            2500  & 266 & 120 & 266 & 296  & 191 & 195 & 187 & 198 \\
            5625  & 382 & 172 & 382 & 434  & 253 & 260 & 249 & 276 \\
            10000 & 495 & 223 & 496 & 570  & 316 & 323 & 310 & 335 \\
            22500 & 717 & 323 & 718 & 842  & 431 & 442 & 422 & 461 \\
            40000 & 936 & 423 & 938 & 1112 & 536 & 550 & 525 & 576\\ \hline
        \end{tabular}}} \label{tab:res_2d_edge_log} \quad
        \caption{Numerical rank $(p_{\epsilon})$ with $\bkt{\epsilon = 10^{-12}}$ of different kernel functions of 1-hyper-surface-sharing interaction in $4$D, $3$D, $2$D}%
    \end{table}
\begin{table}[H]
       \centering         \setlength\tabcolsep{0.1pt}
        \subfloat[$4$D 2-hyper-surface sharing]{        \resizebox{0.4\textwidth}{!}{
        \begin{tabular}{p{1cm}|cccccccc}\hline
            $N$ & $F_1(x,y)$ & $F_2(x,y)$ & $F_3(x,y)$ & $F_4(x,y)$ & $F_5(x,y)$ & $F_6(x,y)$ & $F_7(x,y)$ & $F_8(x,y)$\\ \hline
            1296  & 639  & 624  & 651  & 649  & 526  & 565  & 522  & 599  \\
            2401  & 856  & 825  & 873  & 862  & 673  & 725  & 669  & 784  \\
            4096  & 1082 & 1018 & 1096 & 1080 & 810  & 875  & 807  & 965  \\
            10000 & 1555 & 1432 & 1568 & 1562 & 1096 & 1186 & 1107 & 1332 \\
            20736 & 2071 & 1873 & 2091 & 2096 & 1394 & 1504 & 1408 & 1717 \\
            38416 & 2640 & 2341 & 2653 & 2683 & 1693 & 1830 & 1716 & 2102 \\
            50625 & 2949 & 2585 & 2964 & 2996 & 1845 & 1991 & 1873 & 2297 \\ \hline
        \end{tabular}}} \label{tab:res_4d_h2_log} \quad
        \subfloat[$3$D 2-hyper-surface (face) sharing]{        \resizebox{0.4\textwidth}{!}{
        \begin{tabular}{p{1cm}|cccccccc}\hline
            $N$ & $F_1(x,y)$ & $F_2(x,y)$ & $F_3(x,y)$ & $F_4(x,y)$ & $F_5(x,y)$ & $F_6(x,y)$ & $F_7(x,y)$ & $F_8(x,y)$\\ \hline
            125   & 98   & 118  & 99                           & 119  & 111  & 115  & 108  & 113  \\
            1000  & 312  & 422  & 312                          & 465  & 335  & 348  & 332  & 360  \\
            3375  & 610  & 815  & 609                          & 964  & 627  & 642  & 619  & 668  \\
            8000  & 1003 & 1314 & 1003                         & 1623 & 983  & 1006 & 978  & 1046 \\
            15625 & 1491 & 1931 & 1494                         & 2443 & 1392 & 1430 & 1401 & 1497 \\
            27000 & 2077 & 2641 & 2082                         & 3419 & 1874 & 1914 & 1887 & 2012 \\
            42875 & 2764 & 3465 & 2766                         & 4556 & 2393 & 2460 & 2397 & 2587 \\
            64000 & 3547 & 4373 & 3551                         & 5751 & 2960 & 3030 & 2965 & 3224\\ \hline
        \end{tabular}}} \label{tab:res_3d_face_log} \quad
        \caption{Numerical rank $(p_{\epsilon})$ with $\bkt{\epsilon = 10^{-12}}$ of different kernel functions of 2-hyper-surface-sharing interaction in $4$D, $3$D}%
    \end{table}
\begin{table}[H]
       \centering         \setlength\tabcolsep{0.1pt}
        \subfloat[$4$D 3-hyper-surface sharing]{        \resizebox{0.4\textwidth}{!}{
        \begin{tabular}{p{1cm}|cccccccc}\hline
            $N$ & $F_1(x,y)$ & $F_2(x,y)$ & $F_3(x,y)$ & $F_4(x,y)$ & $F_5(x,y)$ & $F_6(x,y)$ & $F_7(x,y)$ & $F_8(x,y)$\\ \hline
            1296  & 842  & 857  & 846  & 841  & 752  & 782  & 733  & 802  \\
            2401  & 1251 & 1267 & 1266 & 1245 & 1093 & 1135 & 1075 & 1177 \\
            4096  & 1755 & 1763 & 1767 & 1741 & 1486 & 1542 & 1448 & 1638 \\
            10000 & 3114 & 3059 & 3127 & 3056 & 2509 & 2584 & 2475 & 2759 \\
            20736 & 4963 & 4807 & 4973 & 4901 & 3917 & 4006 & 3853 & 4303 \\
            38416 & 7461 & 7122 & 7473 & 7337 & 5736 & 5894 & 5677 & 6254 \\
            50625 & 8925 & 8496 & 8940 & 8814 & 6813 & 7010 & 6755 & 7440 \\\hline
        \end{tabular}}}
        \caption{Numerical rank $(p_{\epsilon})$ with $\bkt{\epsilon = 10^{-12}}$ of different kernel functions of 3-hyper-surface-sharing interaction in $4$D}
        \label{tab:res_4d_h3_log}%
    \end{table}
\end{document}


\section{Rank growth of different interactions in 1D} \label{1d}
 Using the~\cref{th3}, we discuss the rank of far-field and vertex sharing interactions as shown in~\cref{1d_interaction}. $I$ and V are the far-field and vertex sharing domains of $Y$ respectively. 
     \begin{figure}[H]
     	\centering \subfloat[Far field and vertex sharing domains of $Y$]{\label{1d_interaction}\resizebox{4cm}{!}{
            \begin{tikzpicture}
            \draw[ultra thick] (0,0) -- (3,0);
            \redcircle{0}{0};
            \redcircle{1}{0};
            \redcircle{2}{0};
            \redcircle{3}{0};
            \node at (0.5,0.2) {$I$};
            \node at (1.5,0.2) {V};
            \node at (2.5,0.2) {$Y$};
            \end{tikzpicture}
        }}
        \qquad \qquad \qquad
        \centering \subfloat[Far-field domain]{\label{1d_far}\resizebox{4cm}{!}{
        \begin{tikzpicture}
            \draw[|-|,dashed] (0,0) node[anchor=north] {$-r$} -- (2,0);
            \draw[|-|] (-2,0) node[anchor=north] {$-2r$} -- (0,0);
            \draw[|-|] (2,0) -- (4,0);   
            \node[anchor=north] at (2,0) {$0$};
            \node[anchor=north] at (4,0) {$r$};
            
            \node[anchor=north] at (-1,0.75) {X};
            \node[anchor=north] at (3,0.75) {Y};
        \end{tikzpicture}
        }}
        \caption{Different interactions in $1D$}
    \end{figure}
    \subsection{\textbf{\textit{Rank growth of far-field domains}}} Here we will assume far-field interaction means the interaction between two domains which are one line-segment away. In~\cref{1d_far} the lines $Y = [0,r]$ and $X = [-2r,-r]$ are one line-segment away. We choose $\bkt{p+1}$ Chebyshev nodes in the domain $Y$ to obtain the polynomial interpolation of $F(\xb,\yb)$ along $\yb$. Let $\Tilde{K}$ be the approximation of the matrix \(K\). Then by~\cref{th3} the error in \mvp at $i^{th}$ component
    \begin{equation}
        \Bigl\lvert \bkt{K \pmb{q}-\Tilde{K} \pmb{q}}_i \Bigl\lvert \leq \frac{4M Q \rho^{-p}}{\rho -1}
    \end{equation}
    Hence, setting the error to be less than $\delta$ (for some $\delta >0$), we have
    $p= \ceil{\frac{\log\bkt{\frac{4MQ}{\delta(\rho-1)}}}{\log(\rho)}} \implies \Bigl\lvert \bkt{K \pmb{q}-\Tilde{K} \pmb{q}}_i \Bigl\lvert < \delta$. Since, the rank of $\Tilde{K} = (p+1)$ i.e., the rank of $\Tilde{K}$ is bounded above by $\bkt{1+\ceil{\frac{\log\bkt{\frac{4MQ}{\delta(\rho-1)}}}{\log(\rho)}}}$.
    
    Therefore, the rank of $\Tilde{K}$ scales $\mathcal{O}\bkt{ \log\bkt{\frac{MQ}{\delta}}}$ with
    $\magn{K \pmb{q} - \Tilde{K} \pmb{q}}_{\infty} < \delta$. The numerical rank plots of the far-field interaction of the eight functions as described in~\Cref{Preliminaries} are shown in~\cref{fig:res_1d_far} and tabulated in~\cref{tab:res_1d_far}.

\cmt{
    \begin{figure}[H]
    \centering
    \subfloat[][N vs Rank plot\label{fig:res_1d_farc}]{\includegraphics[width=0.5\columnwidth]{Images/1d_far.pdf}}\qquad
    \subfloat[][Numerical rank of different kernel functions]
    {\adjustbox{width=0.4\columnwidth,valign=B,raise=1.4\baselineskip}{%
      \renewcommand{\arraystretch}{1}%
      \label{tab:res_1d_farc}%
        \begin{tabular}{c|cccc}\hline
            $N$ & $F_1(x,y)$ & $F_2(x,y)$ & $F_3(x,y)$ & $F_4(x,y)$ \\ \hline
            1000 & 7 & 7 & 8 & 8 \\
            5000 & 7 & 7 & 8 & 8 \\
            10000 & 7 & 7 & 8 & 8 \\
            15000 & 7 & 7 & 8 & 8 \\
            20000 & 7 & 7 & 8 & 8 \\
            25000 & 7 & 7 & 8 & 8 \\
            30000 & 7 & 7 & 8 & 8 \\
            40000  & 7 & 7 & 8 & 8 \\ \hline
        \end{tabular}}
    }
    \caption{Numerical rank $\bkt{tol = 10^{-12}}$ of far-field interaction in $1D$}
    \label{result_1d_farc}
    \end{figure}
}
    \subsection{\textbf{\textit{Rank growth of vertex sharing domains}}}
    Consider a pair of vertex sharing line-segments (domains) $Y = [0,r]$ and $X = [-r,0]$ of length $r$ as shown in~\cref{1d_ver}. The domain \(Y\) is hierarchically sub-divided using an adaptive binary tree as shown in~\cref{1d_ver_sub_l1,1d_ver_sub}, which gives
   
    \begin{equation}
        Y = \overbrace{Y_2 \bigcup Y_1}^{\text{At level 1}} = \overbrace{\left[0,\dfrac{r}{2} \right] \bigcup  \left[\dfrac{r}{2},r \right]}^{\text{At level 1}} = \dots = \overbrace{Y_{\kappa +1} \bigcup_{k=1}^{\kappa} Y_{k}}^{\text{At level }\kappa}  = \overbrace{\left[0,\dfrac{r}{2^{\kappa}} \right] \bigcup_{k=1}^{\kappa} \left[\dfrac{r}{2^{k}}, \dfrac{r}{2^{k-1}} \right]}^{\text{At level }\kappa}
    \end{equation} 
    where $\kappa \sim \log_2(N)$ and $ Y_{\kappa +1 } = \left[0,\dfrac{r}{2^{\kappa}} \right]$ having one particle. Let $\Tilde{K_k}$ be the approximation of $K_k$, then the approximation of the matrix \(K\) is given by
    \begin{equation}
        \Tilde{K} = \dsum_{k=1}^{\kappa} \Tilde{K_k} + K_{\kappa+1}
    \end{equation}
    We choose a $\bkt{p_k+1}$ Chebyshev nodes in the line-segment $Y_{k}$ to obtain the polynomial interpolation of $F(\xb,\yb)$ along $\yb$. Then by~\cref{th3} the $i^{th}$ component error in \mvp at level \(k\) is
    \begin{equation}
        \Bigl\lvert \bkt{K_k \pmb{q}-\Tilde{K}_k \pmb{q}}_i \Bigl\lvert \leq \frac{4M_k Q_k \rho^{-p_k}}{\rho -1}
    \end{equation}
    Now choosing $p_{k}$ such that the above error is less than $\delta_1$ (for some $\delta_1>0$), we obtain \\ $p_k = \bkt{\ceil{\frac{\log\bkt{\frac{4 M_k Q_k}{\delta_1(\rho-1)}}}{\log(\rho)}}} \implies \abs{\bkt{K_k \pmb{q}-\Tilde{K}_k \pmb{q}}_i} < \delta_1$ with rank of $\Tilde{K}_{k} $ is $ (1 + p_k)$.\\
    Let $p_l = \max \{p_{k} : k=1,2,\hdots, \kappa \}$, which corresponds to $\Tilde{K}_l$. Hence, we get
    \begin{align}
    \begin{split}
        \Bigl\lvert \bkt{K \pmb{q} - \Tilde{K} \pmb{q}}_i \Bigl\lvert =\Bigl\lvert \bkt{\bkt{K_1+ K_2+\cdots + K_\kappa}q - \bkt{\Tilde{K_1}+\Tilde{K_2}+\cdots + \Tilde{K_\kappa}} \pmb{q}}_i \Bigl\lvert \\
        \leq \Bigl\lvert \bkt{K_1 \pmb{q}-\Tilde{K}_1 \pmb{q}}_i \Bigl\lvert + \Bigl\lvert \bkt{K_2 \pmb{q}-\Tilde{K}_2 \pmb{q}}_i \Bigl\lvert + \cdots + \Bigl\lvert \bkt{K_\kappa \pmb{q}-\Tilde{K}_\kappa \pmb{q}}_i \Bigl\lvert  < \kappa \delta_1
    \end{split}
    \end{align}
    The rank of $\Tilde{K}$ is bounded above by 
    $\bkt{1+ \kappa \bkt{1+p_l}} = \bkt{1 + \kappa \bkt{1+\ceil{\frac{\log\bkt{\frac{4 M_l Q_l}{\delta_1(\rho-1)}}}{\log(\rho)}}}}$.
    Note that the rank of $K_{\kappa +1}$ is one. If we choose $\delta_1 = \dfrac{\delta}{\kappa}$, then $\Bigl\lvert \bkt{K \pmb{q} - \Tilde{K} \pmb{q}}_i \Bigl\lvert < \delta$ with rank of $\Tilde{K} $ bounded above by $ 1 +  \kappa \bkt{1+\ceil{\frac{ \log \bkt{\frac{4 \kappa M_{l} Q_{l}}{\delta (\rho-1)}}}{\log(\rho)}}} = 1 + \log_2(N) \bkt{1+\ceil{\frac{ \log \bkt{\frac{4 \log_2(N) M_{l} Q_{l}}{\delta (\rho-1)}}}{\log(\rho)}}}$.

    Therefore, the rank of $\Tilde{K}$ scales $\mathcal{O}\bkt{\log_2(N) \log\bkt{\frac{M_l Q_l \log_2(N)}{\delta}}}$ with
    $\magn{K \pmb{q} - \Tilde{K} \pmb{q}}_{\infty} < \delta$. The numerical rank plots of the vertex sharing interaction of the eight functions as described in~\Cref{Preliminaries} are shown in~\cref{fig:res_1d_vert} and tabulated in~\cref{tab:res_1d_vert}.
    
\cmt{
    \begin{figure}[H]
    \centering
    \subfloat[][N vs Rank plot]{\includegraphics[width=0.5\columnwidth]{Images/1d_ver.pdf}}
    \qquad
    \subfloat[][Numerical rank of different kernel functions]
    {\adjustbox{width=0.4\columnwidth,valign=B,raise=1.4\baselineskip}{%
      \renewcommand{\arraystretch}{1}%
        \begin{tabular}{c|cccc}\hline
            $N$ & $F_1(x,y)$ & $F_2(x,y)$ & $F_3(x,y)$ & $F_4(x,y)$ \\ \hline
            1000 & 22 & 20 & 22 & 22 \\
            5000 & 27 & 23 & 27 & 26 \\
            10000 & 29 & 25 & 29 & 27 \\
            15000 & 30 & 26 & 30 & 28 \\
            20000 & 31 & 26 & 31 & 29 \\
            25000 & 31 & 27 & 31 & 30 \\
            30000 & 32 & 27 & 32 & 30 \\
            40000 & 33 & 27 & 33 & 31 \\ \hline
        \end{tabular}}
    }
    \caption{Numerical rank $\bkt{tol = 10^{-12}}$ of vertex sharing interaction in $1D$}
    \label{result_1d_verc}
    \end{figure}
}


\section{Rank growth of different interactions in 2D} \label{2d}  We discuss the rank of far-field, vertex sharing and edge sharing interactions as shown in~\cref{2d_inter}. $I$, V and $E$ are far-field, vertex sharing and edge sharing domains of $Y$ respectively. 

\begin{figure}[H]
    \centering   \subfloat[Far field, Vertex sharing and Edge sharing of $Y$]{\label{2d_inter}\resizebox{5cm}{!}{
        \begin{tikzpicture}[scale=1]
            \redsquare{0}{0};
            \redsquare{1}{0};
            \redsquare{2}{0};
            \redsquare{1}{-1};
            \node at (0.5,0.5) {$I$};
            \node at (1.5,-0.5) {V};
            \node at (1.5,0.5) {$E$};
            \node at (2.5,0.5) {$Y$};
        \end{tikzpicture}
    \label{2d_interactions}
        }}
        \qquad \qquad \qquad
    \centering   \subfloat[Far-field square boxes]{\label{2d_far_pic}\resizebox{5cm}{!}{
		\begin{tikzpicture}[scale=0.75]
			\draw (-1,-1) rectangle (1,1);
			\draw (3,-1) rectangle (5,1);
			\node at (0,0) {$X$};
			\node at (4,0) {$Y$};
			\draw [<->] (-1,-1.25) -- (1,-1.25);
			\draw [<->] (1,-1.25) -- (3,-1.25);
			\draw [<->] (3,-1.25) -- (5,-1.25);
			\draw [<->] (-1.25,-1) -- (-1.25,1);
			\node at (2,-1.5) {$r$};
			\node at (0,-1.5) {$r$};
			\node at (4,-1.5) {$r$};
			\node at (-1.5,0) {$r$};
		\end{tikzpicture}
		}}
    \caption{Different interactions in $2D$}
    \label{2d_far}
\end{figure}

    \subsection{\textbf{\textit{Rank growth of far-field domains}}} Let $X$ and $Y$ be square boxes of length $r$ and the distance between them is also $r$ as shown in ~\cref{2d_far_pic}. We choose a $\bkt{p+1}\times\bkt{p+1}$ tensor product grid on Chebyshev nodes in the square $Y$ to obtain the polynomial interpolation of $F(\xb,\yb)$ along $\yb$. Let $\Tilde{K}$ be the approximation of the matrix \(K\). Then by~\cref{eq1} the $i^{th}$ component error in \mvp is
        \begin{equation}
            \Bigl\lvert \bkt{K \pmb{q}-\Tilde{K} \pmb{q}}_i \Bigl\lvert \leq 4 M Q V_2 \frac{\rho^{-p}}{\rho -1}
        \end{equation}
        Hence, setting the error to be less than $\delta$ (for a given $\delta >0$),
            $ p =  \ceil{\frac{ \log \bkt{\frac{4M Q V_2}{\delta (\rho-1)}}}{\log(\rho)}} \implies \Bigl\lvert \bkt{K \pmb{q}-\Tilde{K} \pmb{q}}_i \Bigl\lvert < \delta$.
        Since, the rank of $\Tilde{K} =(p+1)^2$ i.e., the rank of $\Tilde{K}$ is bounded above by $ \bkt{1 + \ceil{\frac{ \log \bkt{\frac{4 M Q V_2}{\delta (\rho-1)}}}{\log(\rho)}}}^2.$ 
        
        Therefore, the rank of $\Tilde{K}$ scales
        $\mathcal{O}\bkt{\bkt{ \log \bkt{\frac{MQ}{\delta}}}^2}$ with $\magn{K \pmb{q} - \Tilde{K} \pmb{q}}_{\infty} < \delta$. The numerical rank plots of the far-field interaction of the eight functions as described in~\Cref{Preliminaries} are shown in~\cref{fig:res_2d_far} and tabulated in~\cref{tab:res_2d_far}

\cmt{
    \begin{figure}[H]
    \centering
    \subfloat[][N vs Rank plot]{\includegraphics[width=0.5\columnwidth]{Images/2d_far.pdf}}\qquad
    \subfloat[][Numerical rank of different kernel functions]
    {\adjustbox{width=0.4\columnwidth,valign=B,raise=1.4\baselineskip}{%
      \renewcommand{\arraystretch}{1}%
        \begin{tabular}{c|cccc}\hline
            $N$ & $F_1(x,y)$ & $F_2(x,y)$ & $F_3(x,y)$ & $F_4(x,y)$ \\ \hline
            1600 & 42 & 21 & 48	& 49 \\
            2500 & 42 & 21 & 48	& 49 \\
            5625 & 42 & 21 & 47	& 48 \\
            10000 & 42 & 21	& 47 & 48 \\
            22500 & 42 & 19	& 47 & 47 \\
            40000  & 42	& 19 & 46 & 47 \\ \hline
        \end{tabular}}
    }
    \caption{Numerical rank $\bkt{tol = 10^{-12}}$ of far-field interaction in $2D$}
    \label{result_2d_far}
\end{figure}
}
    \subsection{\textbf{\textit{Rank growth of vertex sharing domains}}}
        Consider two vertex sharing square domains of length \(r\), \(X\) and \(Y\) as shown in~\cref{2d_vertex}. The box $Y$ is hierarchically sub-divided using an adaptive quad tree as shown in~\cref{2d_vertex_divide_level1,2d_vertex_divide}, which gives
        
            

            
            

            
        
        \begin{equation}
            Y = \overbrace{Y_2 \bigcup_{j=1}^{2^2-1} Y_{1,j}}^{\text{At level 1}} = \dots = \overbrace{Y_{\kappa + 1} \bigcup_{k=1}^{\kappa} \bigcup_{j=1}^{3} Y_{k,j}}^{\text{At level } \kappa}
        \end{equation}
        where $\kappa \sim \log_4(N)$ and $ Y_{\kappa +1 }$ having one particle. Let $\Tilde{K}$ be the approximation of the kernel matrix \(K\).
        Let $\Tilde{K}_{k,j}$ be approximation of the matrix ${K_{k,j}}$, then approximation of matrix $K$ is given by
        \begin{equation}
            \Tilde{K} = \sum_{k=1}^{\kappa} \sum_{j=1}^{3} \Tilde{K}_{k,j} + K_{\kappa+1}
        \end{equation}
        We choose a $\bkt{p_{k,j}+1}\times\bkt{p_{k,j}+1}$ tensor product grid on Chebyshev nodes in the square $Y_{k,j}$ to obtain the polynomial interpolation of $F(\xb,\yb)$ along $\yb$. Then by~\cref{eq1} the $i^{th}$ component error in \mvp at $j^{th}$ subdivision of the level $k$ is given by (pick $p_{k,j}$ such that the error is less than $\delta_1$)
        \begin{equation}
            \Bigl\lvert \bkt{K_{k,j} \pmb{q}-\Tilde{K}_{k,j} \pmb{q}}_i \Bigl\lvert \leq 4M_{k,j} Q_{k,j} V_2 \frac{\rho^{-p_{k,j}}}{\rho -1}
        \end{equation}
        Now choosing $p_{k,j}$ such that the above error is less than $\delta_1$ (for some $\delta_1>0$), we obtain \\ 
        $p_{k,j} =  \ceil{\frac{ \log \bkt{\frac{4M_{k,j} Q_{k,j} V_2}{\delta_1 (\rho-1)}}}{\log(\rho)}} \implies  \Bigl\lvert \bkt{K_{k,j} \pmb{q}-\Tilde{K}_{k,j} \pmb{q}}_i \Bigl\lvert < \delta_1$ with rank of $\Tilde{K}_{k,j} $ is $ (1 + p_{k,j})^2$. \\
    Let $p_{l,m} = \max \{p_{k,j} : k=1,2,\hdots, \kappa \text{ and } j = 1,2,3 \}$, which corresponds to $\Tilde{K}_{l,m}$.
        So, at level $k$ the error in \mvp at $i^{th}$ component is
        \begin{align}
            \Bigl\lvert \bkt{K_{k} \pmb{q}-\Tilde{K}_{k} \pmb{q}}_i \Bigl\lvert = \abs{ \sum_{j=1}^{3}  \bkt{K_{k,j} \pmb{q}-\Tilde{K}_{k,j} \pmb{q}}_i} \leq  \sum_{j=1}^{3} \Bigl\lvert \bkt{K_{k,j} \pmb{q}-\Tilde{K}_{k,j} \pmb{q}}_i \Bigl\lvert < 3 \delta_1
        \end{align}
    with rank of $\Tilde{K}_k$ is bounded above by $3 (1 + p_{l,m})^2$.
        Hence, we get 
        \begin{gather}
            \Bigl\lvert \bkt{K \pmb{q} - \Tilde{K} \pmb{q}}_i \Bigl\lvert =\Bigl\lvert \bkt{\bkt{K_1+ K_2+\cdots + K_\kappa} \pmb{q} - \bkt{\Tilde{K_1}+\Tilde{K_2}+\cdots + \Tilde{K_\kappa}} \pmb{q}}_i \Bigl\lvert
            \leq \dsum_{k=1}^{\kappa} \Bigl\lvert \bkt{K_k \pmb{q}-\Tilde{K}_k \pmb{q}}_i \Bigl\lvert  < 3 \kappa \delta_1
        \end{gather}
        and the rank of $\Tilde{K} $ is bounded above by $ 1+3 \kappa (1 + p_{l,m})^2 = 1+3 \kappa \bkt{1 + \ceil{\frac{ \log \bkt{\frac{4 M_{l,m} Q_{l,m} V_2}{\delta_1 (\rho-1)}}}{\log(\rho)}}}^2$. Note that the rank of $K_{\kappa +1}$ is one.
        If we choose $\delta_1 = \dfrac{\delta}{3 \kappa}$, then $\Bigl\lvert \bkt{K \pmb{q} - \Tilde{K} \pmb{q}}_i \Bigl\lvert < \delta$ with rank of $\Tilde{K} $ bounded above by $ 1 + 3 \kappa \bkt{1+\ceil{\frac{ \log \bkt{\frac{12 \kappa M_{l,m} Q_{l,m} V_2}{\delta (\rho-1)}}}{\log(\rho)}}}^2 = 1 + 3 \log_4(N) \bkt{1+\ceil{\frac{ \log \bkt{\frac{12 \log_4(N) M_{l,m} Q_{l,m} V_2}{\delta (\rho-1)}}}{\log(\rho)}}}^2$.
        
        Therefore, the rank of $\Tilde{K}$ scales $\mathcal{O}\bkt{\log_4(N) \bkt{ \log \bkt{\frac{M_{l,m} Q_{l,m} \log_4(N)}{\delta}}}^2}$ with
         $\magn{K \pmb{q} - \Tilde{K} \pmb{q}}_{\infty} < \delta$. The numerical rank plots of the vertex sharing interaction of the eight functions as described in~\Cref{Preliminaries} are shown in~\cref{fig:res_2d_ver} and tabulated in~\cref{tab:res_2d_ver}.
         
 \cmt{
    \begin{figure}[H]
    \centering
    \subfloat[][N vs Rank plot]{\includegraphics[width=0.5\columnwidth]{Images/2d_ver.pdf}}\qquad
    \subfloat[][Numerical rank of different kernel functions]
    {\adjustbox{width=0.4\columnwidth,valign=B,raise=1.4\baselineskip}{%
      \renewcommand{\arraystretch}{1}%
        \begin{tabular}{c|cccc}\hline
            $N$ & $F_1(x,y)$ & $F_2(x,y)$ & $F_3(x,y)$ & $F_4(x,y)$ \\ \hline
            1600 & 81 & 34 & 87	& 94 \\
            2500 & 87 & 36 & 91 & 102 \\
            5625 & 96 & 39 & 101 & 114 \\
            10000 & 104	& 41 & 108 & 122 \\
            22500 & 112	& 44 & 118 & 135 \\
            40000  & 119 & 45 & 124 & 143 \\ \hline
        \end{tabular}}
    }
    \caption{Numerical rank $\bkt{tol = 10^{-12}}$ of vertex sharing interaction in $2D$}
    \label{result_2d_ver}
    \end{figure}         
 }       
    
    \subsection{\textbf{\textit{Rank growth of edge sharing domains}}}

    Consider two edge sharing square boxes \(X\) and \(Y\) as shown in~\cref{2d_edge}. The box \(Y\) is hierarchically sub-divided using an adaptive quad tree as shown in~\cref{2d_edge_divide_l1,2d_edge_divide}, which gives
    
         \begin{figure}[tbhp]
    \centering
    \subfloat[Edge sharing square boxes]{\label{2d_edge}\resizebox{4.2cm}{!}{
                    \begin{tikzpicture}[scale=1.2]
         				\draw (-1,-1) rectangle (1,1);
         				\draw (1,-1) rectangle (3,1);
         				\node at (0,0) {$X$};
         				\node at (2,0) {$Y$};
         				\draw [<->] (-1,-1.25) -- (1,-1.25);
         				\node at (0,-1.5) {$r$};
         				\draw [<->] (1,-1.25) -- (3,-1.25);
         				\node at (2,-1.5) {$r$};
         				\draw [<->] (-1.25,-1) -- (-1.25,1);
         				\node[anchor=east] at (-1.25,0) {$r$};
         			\end{tikzpicture}
        }}
    \qquad
    \subfloat[subdivision at level 1]
    {\resizebox{4.2cm}{!}{
	        \label{2d_edge_divide_l1} 
	        \begin{tikzpicture}[scale=1.2]
         				\draw (-1,-1) rectangle (1,1);
         				\draw (1,-1) rectangle (3,1);
         				
         				\draw (1,-1) grid (3,1);
         				\node at (2.5,-0.5) {$Y_{1,1}$};
         				\node at (2.5,0.5) {$Y_{1,2}$};
         				\node at (1.5,0) {$Y_{2}$};
         				
         				   
         				


 
         				\node at (0,0) {\small $X$};
         				\draw [<->] (-1,-1.25) -- (1,-1.25);
         				\node at (0,-1.5) {$r$};
         				\draw [<->] (1,-1.25) -- (3,-1.25);
         				\node at (2,-1.5) {$r$};
         				\draw [<->] (-1.25,-1) -- (-1.25,1);
         				\node[anchor=east] at (-1.25,0) {$r$};
            \end{tikzpicture}
        }
    }
        \qquad 
    \subfloat[Hierarchical subdivision]
    {\resizebox{4.2cm}{!}{
	        \label{2d_edge_divide} 
	        \begin{tikzpicture}[scale=1.2]
         				\draw (-1,-1) rectangle (1,1);
         				\draw (1,-1) rectangle (3,1);
         				
         				\draw (1,-1) grid (3,1);
         				\node at (2.5,-0.5) {$Y_{1,1}$};
         				\node at (2.5,0.5) {$Y_{1,2}$};
         				
         				\foreach \i in {0,1,2,3}
         				    \draw (1.5,-1+0.5*\i) rectangle (2,-0.5+0.5*\i);
         				   
         				\node at (1.75,-0.75) {\small $Y_{2,1}$};
         				\node at (1.75,-0.25) {\small $Y_{2,2}$};
         				\node at (1.75,0.25) {\small $Y_{2,3}$};
         				\node at (1.75,0.75) {\small $Y_{2,4}$};
         				
         				\foreach \i in {0,1,2,...,7}
         				    \draw (1.25,-1+0.25*\i) rectangle (1.5,-0.75+0.25*\i);

                        \foreach \i in {0,1,2,...,15}
         				    \draw (1.125,-1+0.125*\i) rectangle (1.25,-0.875+0.125*\i);

                        \foreach \i in {0,1,2,...,31}
         				    \draw (1.0625,-1+0.0625*\i) rectangle (1.125,-0.9375+0.0625*\i);
 
         				\node at (0,0) {\small $X$};
         				\draw [<->] (-1,-1.25) -- (1,-1.25);
         				\node at (0,-1.5) {$r$};
         				\draw [<->] (1,-1.25) -- (3,-1.25);
         				\node at (2,-1.5) {$r$};
         				\draw [<->] (-1.25,-1) -- (-1.25,1);
         				\node[anchor=east] at (-1.25,0) {$r$};
            \end{tikzpicture}
        }
    }
    \caption{Edge sharing boxes and its subdivision in $2D$}
\end{figure}
    
    \begin{equation}
        Y = \overbrace{Y_2 \bigcup_{j=1}^{2} Y_{1,j}}^{\text{At level 1}} = \dots= \overbrace{Y_{\kappa +1} \bigcup_{k=1}^{\kappa} \bigcup_{j=1}^{2^k} Y_{k,j}}^{\text{At level }\kappa}
    \end{equation}
    where $\kappa \sim \log_4(N)$ and $Y_{\kappa +1 }$ having $2^{\kappa} = \sqrt{N}$ particles. Let $\Tilde{K}_{k,j}$ be approximation of ${K_{k,j}}$, then approximation of matrix $K$ is given by
    \begin{equation}
        \Tilde{K} = \sum_{k=1}^{\kappa} \sum_{j=1}^{2^k} \Tilde{K}_{k,j} + K_{\kappa+1}
    \end{equation}
    We choose a $\bkt{p_{k,j}+1}\times\bkt{p_{k,j}+1}$ tensor product grid on Chebyshev nodes in the square $Y_{k,j}$ to obtain the polynomial interpolation of $F(\xb,\yb)$ along $\yb$. Then by~\cref{eq1} the $i^{th}$ component error in \mvp at $j^{th}$ subdivision of the level $k$ is given by (pick $p_{k,j}$ such that the error is less than $\delta_1$)
    \begin{equation}
        \Bigl\lvert \bkt{K_{k,j} \pmb{q}-\Tilde{K}_{k,j} \pmb{q}}_i \Bigl\lvert \leq 4M_{k,j} Q_{k,j} V_2 \frac{\rho^{-p_{k,j}}}{\rho -1} 
    \end{equation}
    Now choosing $p_{k,j}$ such that the above error is less than $\delta_1$ (for some $\delta_1>0$), we obtain \\ $p_{k,j}  = \ceil{\frac{ \log \bkt{\frac{4 M_{k,j} Q_{k,j} V_2}{\delta_1 (\rho-1)}}}{\log(\rho)}} \implies  \Bigl\lvert \bkt{K_{k,j} \pmb{q}-\Tilde{K}_{k,j} \pmb{q}}_i \Bigl\lvert < \delta_1$. Let $p_{l,m} = \max \{p_{k,j} : k=1,2,\hdots, \kappa \text{ and } j = 1,2, \hdots, 2^k \}$, which corresponds to $\Tilde{K}_{l,m}$.
    Note that the rank of $K_{\kappa +1}$ is $\sqrt{N}$. So, the rank of $\Tilde{K}$ is bounded above by
    \begin{align} \label{eq2}
        \begin{split}
            \sqrt{N} + \sum_{k=1}^{\kappa} 2^k \bkt{1 + \ceil{\frac{ \log \bkt{\frac{4 M_{l,m} Q_{l,m} V_2}{\delta_1 (\rho-1)}}}{\log(\rho)}}}^2 = \sqrt{N} + \bkt{2^{\kappa+1}-2}\bkt{1 + \ceil{\frac{ \log \bkt{\frac{4 M_{l,m} Q_{l,m} V_2}{\delta_1 (\rho-1)}}}{\log(\rho)}}}^2 \\ = \sqrt{N} + \bkt{2\sqrt{N}-2}\bkt{1 + \ceil{\frac{ \log \bkt{\frac{4 M_{l,m} Q_{l,m} V_2}{\delta_1 (\rho-1)}}}{\log(\rho)}}}^2
        \end{split}
    \end{align}
     Therefore, the rank of $\Tilde{K} \in \mathcal{O}\bkt{\sqrt{N}\bkt{\log\bkt{\frac{M_{l,m} Q_{l,m}}{\delta_1}}}^2}$
    and the error in \mvp
    \begin{equation}
         \Bigl\lvert \bkt{K \pmb{q} - \Tilde{K} \pmb{q}}_i \Bigl\lvert <  \sum_{k=1}^{\kappa} 2^k \delta_1 = 2\bkt{2^{\kappa}-1} \delta_1=\bkt{2^{\kappa +1}-2} \delta_1 < 2 \sqrt{N} \delta_1  ,\text{ as }  \kappa \sim \log_4(N)
    \end{equation}
    If we choose $\delta_1 = \frac{\delta}{2 \sqrt{N}}$ then
    $\Bigl\lvert \bkt{K \pmb{q} - \Tilde{K} \pmb{q}}_i \Bigl\lvert < \delta $ and the rank of $\Tilde{K} \in$ $\mathcal{O}\bkt{\sqrt{N}\bkt{\log\bkt{\frac{2\sqrt{N} M_{l,m} Q_{l,m}}{\delta}}}^2}$
    
    Hence, the rank of $\Tilde{K}$ scales $\mathcal{O}\bkt{\sqrt{N}\bkt{\log\bkt{\frac{\sqrt{N} M_{l,m} Q_{l,m}}{\delta}}}^2}$ with
    $\magn{K \pmb{q} - \Tilde{K} \pmb{q}}_{\infty} < \delta$. The numerical rank plots of the edge sharing interaction of the eight functions as described in~\Cref{Preliminaries} are shown in~\cref{fig:res_2d_edge} and tabulated in~\cref{tab:res_2d_edge}.

\cmt{
    \begin{figure}[H]
    \centering
    \subfloat[][N vs Rank plot]{\includegraphics[width=0.5\columnwidth]{Images/2d_edge.pdf}}\qquad
    \subfloat[][Numerical rank of different kernel functions]
    {\adjustbox{width=0.4\columnwidth,valign=B,raise=1.4\baselineskip}{%
      \renewcommand{\arraystretch}{1}%
        \begin{tabular}{c|cccc}\hline
            $N$ & $F_1(x,y)$ & $F_2(x,y)$ & $F_3(x,y)$ & $F_4(x,y)$ \\ \hline
            1600 & 216 & 99	& 220 & 241 \\
            2500 & 266 & 120 & 269 & 296 \\
            5625 & 382 & 172 & 388 & 434 \\
            10000 & 495	& 223 & 502 & 570 \\
            22500 & 717	& 323 & 727 & 842 \\
            40000  & 936 & 423 & 949 & 1112 \\ \hline
        \end{tabular}}
    }
    \caption{Numerical rank $\bkt{tol = 10^{-12}}$ of edge sharing interaction in $2D$}
    \label{result_2d_edge}
    \end{figure}
}

\section{Rank growth of different of interactions in 3D} \label{3d}
    We discuss the rank of far-field, vertex sharing, edge sharing and face sharing interactions as shown in~\cref{3d_inter}. $I$, V, $E$ and $F$ be the far-field, vertex sharing, edge sharing and face sharing domains of the cube $Y$ respectively.
    \begin{figure}[H]
    \centering  \subfloat[Different interactions of $Y$]{\label{3d_inter}\resizebox{5cm}{!}{
     	\begin{tikzpicture} 
        \cube{0}{0}
        \cube{-1}{0}
        \cube{-2}{0}
        \cube{1+\iso}{\iso}
        \cube{1+\iso}{1+\iso}
        \draw [ultra thick] (-2,1) -- (-2+\iso,1+\iso) -- (1+\iso,1+\iso);
        \draw [ultra thick] (1,0) -- (1+\iso,\iso);
        \draw [ultra thick] (2+\iso,\iso) -- (2+2*\iso,2*\iso) -- (2+2*\iso,2+2*\iso) -- (1+2*\iso,2+2*\iso) -- (1+\iso,2+\iso);
    
        \node at (0.7,0.7) {$Y$};
        \node at (-0.3,0.7) {$F$};
        \node at (-1.3,0.7) {$I$};
        \node at (1.7+\iso,0.7+\iso) {$E$};
        \node at (1.7+\iso,1.7+\iso) {V};
        \end{tikzpicture}
        }}
        \qquad \qquad \qquad
        \centering   \subfloat[Far-field cubes]{\label{3d_far}\resizebox{5cm}{!}{
                    	\begin{tikzpicture}[scale=0.5]
         				\draw (2,2,0)--(0,2,0)--(0,2,2)--(2,2,2)--(2,2,0)--(2,0,0)--(2,0,2)--(0,0,2)--(0,2,2);
         		     	\draw
         		     	(2,2,2)--(2,0,2);
         		     	\draw
         		     	(2,0,0)--(0,0,0)--(0,2,0);
         		     	\draw
         		     	(0,0,0)--(0,0,2);
         			    \node at (1,1,1) {$X$};
         		
                        \draw
                        (4,0,2) rectangle (6,2,2);
                        \draw
                        (4,0,0) rectangle (6,2,0);
                        \draw
                        (6,2,0)--(6,2,2);
                        \draw
                        (6,0,0)--(6,0,2);
                        \draw (4,0,0) -- (4,0,2);
                        \draw (4,2,0) -- (4,2,2);
                        \node at (5,1,1) {$Y$};
         			\end{tikzpicture}
        }}
        \caption{Different interactions in 3D}
    \end{figure}

    \subsection{\textbf{\textit{Rank growth of far-field domains}}} Let $X$ and $Y$ be cubes of size \(r\) separated by a distance \(r\) as shown in ~\cref{3d_far}. We choose a $\bkt{p+1}\times\bkt{p+1}\times\bkt{p+1}$ tensor product grid of Chebyshev nodes in the box $Y$ to obtain the polynomial interpolation of $F(\xb,\yb)$ along $\yb$. Let $\Tilde{K}$ be the approximation of the matrix \(K\). Then by~\cref{eq1} the $i^{th}$ component error in \mvp is given by
    \begin{equation}
        \Bigl\lvert \bkt{K \pmb{q}-\Tilde{K} \pmb{q}}_i \Bigl\lvert \leq 4 M Q V_3 \frac{\rho^{-p}}{\rho -1}
    \end{equation}
    Now, setting the above error to be less than $\delta >0$, we have
        $ p= \ceil{\frac{ \log \bkt{\frac{4 M Q V_3}{\delta (\rho-1)}}}{\log(\rho)}} \implies \Bigl\lvert \bkt{K \pmb{q}-\Tilde{K} \pmb{q}}_i \Bigl\lvert < \delta$.
        Since, the rank of $ \Tilde{K} = (p + 1)^3$ i.e., the rank of $\Tilde{K}$ is bounded above by $\bkt{1 + \ceil{\frac{ \log \bkt{\frac{4 M Q V_3}{\delta (\rho-1)}}}{\log(\rho)}}}^3.$
        
        Hence, the rank of $\Tilde{K}$ scales $\mathcal{O}\bkt{\bkt{ \log\bkt{\frac{MQ}{\delta}}}^3}$ and
       $\magn{K \pmb{q} - \Tilde{K} \pmb{q}}_{\infty} < \delta$. The numerical rank plots of the far-field interaction of the eight functions as described in~\Cref{Preliminaries} are shown in~\cref{fig:res_3d_far} and tabulated in~\cref{tab:res_3d_far}.

\cmt{
\begin{figure}[H]
    \centering
    \subfloat[][N vs Rank plot]{\includegraphics[width=0.5\columnwidth]{Images/3d_far.pdf}}\qquad
    \subfloat[][Numerical rank of different kernel functions]
    {\adjustbox{width=0.4\columnwidth,valign=B,raise=1.4\baselineskip}{%
      \renewcommand{\arraystretch}{1}%
        \begin{tabular}{c|cccc}\hline
            $N$ & $F_1(x,y)$ & $F_2(x,y)$ & $F_3(x,y)$ & $F_4(x,y)$ \\ \hline
            1000 & 149 & 188 & 163 & 238 \\
            3375 & 148 & 191 & 160 & 249 \\
            8000 & 147 & 190 & 158 & 246 \\
            15625 & 143	& 188 & 156 & 243 \\
            27000 & 143	& 186 & 156 & 241 \\
            42875  & 141 & 186 & 154 & 241 \\
            64000  & 140 & 185 & 154 & 241 \\
            \hline
        \end{tabular}}
    }
    \caption{Numerical rank $\bkt{tol = 10^{-12}}$ of far-field interaction in $3D$}
    \label{result_3d_far}
\end{figure}
}

    \subsection{\textbf{\textit{Rank growth of vertex sharing domains}}}
        Consider two vertex sharing cubes \(X\) (big red cube) and \(Y\) (big black cube) as shown in~\cref{3d_ver}. The black cube $Y$ is hierarchically sub-divided using an adaptive oct tree as shown in~\cref{3d_ver}. \href{https://sites.google.com/view/dom3d/vertex-sharing-domains}{\textbf{The link here provides a better $3D$ view}}. So, we can write $Y$ as
        


    
        \begin{equation}
            Y = \overbrace{Y_2 \bigcup_{j=1}^{2^3 -1}Y_{1,j}}^{\text{At level 1}} =  \dots =  \overbrace{Y_{\kappa +1} \bigcup_{k=1}^{\kappa} \bigcup_{j=1}^{7} Y_{k,j}}^{\text{At level } \kappa}
        \end{equation}
        where $\kappa \sim \log_8(N)$ and $Y_{\kappa +1}$ having one particle. Let $\Tilde{K}$ be the approximation of the kernel matrix \(K\).
        Let $\Tilde{K}_{k,j}$ be approximation of ${K_{k,j}}$, then approximation of the kernel matrix $K$ is given by
        \begin{equation}
            \Tilde{K} = \sum_{k=1}^{\kappa} \sum_{j=1}^{7} \Tilde{K}_{k,j} + K_{\kappa +1}
        \end{equation}
        We choose a $\bkt{p_{k,j}+1}\times\bkt{p_{k,j}+1}\times\bkt{p_{k,j}+1}$ tensor product grid on Chebyshev nodes in the cube $Y_{k,j}$ to obtain the polynomial interpolation of $F(\xb,\yb)$ along $\yb$. Then by~\cref{eq1} the $i^{th}$ component error in \mvp at $j^{th}$ subdivision of the level $k$ is given by
        \begin{equation}
            \Bigl\lvert \bkt{K_{k,j} \pmb{q}-\Tilde{K}_{k,j} \pmb{q}}_i \Bigl\lvert \leq 4 M_{k,j} Q_{k,j} V_3 \frac{\rho^{-p_{k,j}}}{\rho -1} 
        \end{equation}
        Now choosing $p_{k,j}$ such that the above error is less than $\delta_1$ (for some $\delta_1>0$), we obtain \\ $p_{k,j} = \ceil{\frac{ \log \bkt{\frac{4 M_{k,j} Q_{k,j} V_3}{\delta_1 (\rho-1)}}}{\log(\rho)}} \implies  \abs{ \bkt{K_{k,j} \pmb{q}-\Tilde{K}_{k,j} \pmb{q}}_i} < \delta_1$ 
        with rank of $\Tilde{K}_{k,j} $ is $ (1 + p_{k,j})^3$. \\ Let $p_{l,m} = \max \{p_{k,j} : k=1,2,\hdots, \kappa \text{ and } j = 1,2,\hdots,7 \}$, which corresponds to $\Tilde{K}_{l,m}$. So, at level $k$ the error in \mvp at $i^{th}$ component is
        \begin{align}
            \abs{\bkt{K_{k} \pmb{q}-\Tilde{K}_{k} \pmb{q}}_i} =\abs{ \sum_{j=1}^{7}  \bkt{K_{k,j} \pmb{q}-\Tilde{K}_{k,j} \pmb{q}}_i} \leq  \sum_{j=1}^{7} \abs{ \bkt{K_{k,j} \pmb{q}-\Tilde{K}_{k,j} \pmb{q}}_i} < 7 \delta_1
        \end{align}
        with the rank of $\Tilde{K}_{k} $ is bounded above by $ 7(1 + p_{l,m})^3$.
        Hence, we get 
        \begin{align}
            \begin{split}
                \Bigl\lvert \bkt{K \pmb{q} - \Tilde{K} \pmb{q}}_i \Bigl\lvert &=\Bigl\lvert \bkt{\bkt{K_1+ K_2+\cdots + K_\kappa} \pmb{q} - \bkt{\Tilde{K_1}+\Tilde{K_2}+\cdots + \Tilde{K_\kappa}} \pmb{q}}_i \Bigl\lvert \\
            &\leq \dsum_{k=1}^{\kappa} \Bigl\lvert \bkt{K_k \pmb{q}-\Tilde{K}_k \pmb{q}}_i \Bigl\lvert < 7 \kappa \delta_1
            \end{split}
        \end{align}
        and the rank of $\Tilde{K} $ is bounded by $ 1+7 \kappa (1 + p_{l,m})^3 = 1+7 \kappa \bkt{1 + \ceil{\frac{ \log \bkt{\frac{4 M_{l,m} Q_{l,m} V_3}{\delta_1 (\rho-1)}}}{\log(\rho)}}}^3$. Note that the rank of $K_{\kappa +1}$ is one.
        If we choose $\delta_1 = \dfrac{\delta}{7 \kappa}$, then $\Bigl\lvert \bkt{K \pmb{q} - \Tilde{K} \pmb{q}}_i \Bigl\lvert < \delta$ with the rank of $\Tilde{K} $ bounded above by $ 1 + 7 \kappa \bkt{1+\ceil{\frac{ \log \bkt{\frac{28 \kappa M_{l,m} Q_{l,m} V_3}{\delta (\rho-1)}}}{\log(\rho)}}}^3 = 1 + 7 \log_8(N) \bkt{1+\ceil{\frac{ \log \bkt{\frac{28 \log_8(N) M_{l,m} Q_{l,m} V_3}{\delta (\rho-1)}}}{\log(\rho)}}}^3$
    
        Therefore, the rank of $\Tilde{K}$ scales $\mathcal{O}\bkt{\log_8(N) \bkt{ \log\bkt{\frac{M_{l,m} Q_{l,m}\log_8(N)}{\delta}}}^3}$ with
       $\magn{K \pmb{q} - \Tilde{K} \pmb{q}}_{\infty} < \delta$. The numerical rank plots of the vertex sharing interaction of the eight functions as described in~\Cref{Preliminaries} are shown in~\cref{fig:res_3d_ver} and tabulated in~\cref{tab:res_3d_ver}.
        
\cmt{    
\begin{figure}[H]
    \centering
    \subfloat[][N vs Rank plot]{\includegraphics[width=0.5\columnwidth]{Images/3d_ver.pdf}}\qquad
    \subfloat[][Numerical rank of different kernel functions]
    {\adjustbox{width=0.4\columnwidth,valign=B,raise=1.4\baselineskip}{%
      \renewcommand{\arraystretch}{1}%
        \begin{tabular}{c|cccc}\hline
            $N$ & $F_1(x,y)$ & $F_2(x,y)$ & $F_3(x,y)$ & $F_4(x,y)$ \\ \hline
            1000 & 132 & 175 & 150 & 214 \\
            3375 & 162 & 213 & 180 & 269 \\
            8000 & 180 & 241 & 205 & 309 \\
            15625 & 198	& 259 & 219 & 344 \\
            27000 & 207	& 269 & 231 & 367 \\
            42875  & 220 & 281 & 246 & 388 \\
            64000  & 225 & 292 & 253 & 410 \\
            \hline
        \end{tabular}}
    }
    \caption{Numerical rank $\bkt{tol = 10^{-12}}$ of vertex sharing interaction in $3D$}
    \label{result_3d_ver}
\end{figure} }       
    \subsection{\textbf{\textit{Rank growth of edge sharing domains}}}
        Consider edge sharing cubes \(X\) (big red cube) and \(Y\) (big black cube) as shown in~\cref{3d_edg}. The black cube $Y$ is hierarchically sub-divided using an adaptive oct tree as shown in~\cref{3d_edg}. \href{https://sites.google.com/view/dom3d/edge-sharing-domains}{\textbf{The link here provides a better $3D$ view}}. So, we can write $Y$ as
        
        \begin{equation}
            Y =  \overbrace{ Y_2 \bigcup_{j=1}^{3 \cdot 2^1} Y_{1,j}}^{\text{At level 1}} =\dots = \overbrace{Y_{\kappa+1} \bigcup_{k=1}^{\kappa} \bigcup_{j=1}^{3 \cdot 2^k} Y_{k,j}}^{\text{At level }\kappa}
        \end{equation}
    where $\kappa \sim \log_8(N)$ and $Y_{\kappa +1}$ having $2^{\kappa} = N^{1/3}$ particles. Let $\Tilde{K}_{k,j}$ be approximation of ${K_{k,j}}$, then approximation of matrix $K$ is given by
    \begin{equation}
       \Tilde{K} = \dsum_{k=1}^{\kappa} \dsum_{j=1}^{3 \cdot 2^k} \Tilde{K}_{k,j} + K_{\kappa +1} 
    \end{equation}
   We choose a $\bkt{p_{k,j}+1}\times\bkt{p_{k,j}+1}\times\bkt{p_{k,j}+1}$ tensor product grid on Chebyshev nodes in the cube $Y_{k,j}$ to obtain the polynomial interpolation of $F(\xb,\yb)$ along $\yb$. Then by~\cref{eq1} the $i^{th}$ component error in \mvp is
    \begin{equation}
        \Bigl\lvert \bkt{K_{k,j} \pmb{q}-\Tilde{K}_{k,j} \pmb{q}}_i \Bigl\lvert \leq 4M_{k,j} Q_{k,j} V_3 \frac{\rho^{-p_{k,j}}}{\rho -1}
    \end{equation}
    Now choosing $p_{k,j}$ such that the above error is less than $\delta_1$ (for some $\delta_1>0$), we obtain \\ $p_{k,j}  = \ceil{\frac{ \log \bkt{\frac{4 M_{k,j} Q_{k,j} V_3}{\delta_1 (\rho-1)}}}{\log(\rho)}} \implies \abs{\bkt{K_{k,j} \pmb{q}-\Tilde{K}_{k,j} \pmb{q}}_i} < \delta_1$. Let $p_{l,m} = \max \{p_{k,j} : k=1,2,\hdots, \kappa \text{ and } \\ j = 1,2, \hdots, 3 \cdot 2^k \}$, which corresponds to $\Tilde{K}_{l,m}$.
    Note that the rank of $K_{\kappa +1}$ is $N^{\frac{1}{3}}$. So, the rank of $\Tilde{K}$ is bounded above by
    \begin{align} \label{eq3}
        \begin{split}
            N^{\frac{1}{3}} + \sum_{k=1}^{\kappa} 3 \cdot 2^k \bkt{1 + \ceil{\frac{ \log \bkt{\frac{4 M_{l,m} Q_{l,m} V_3}{\delta_1 (\rho-1)}}}{\log(\rho)}}}^3  = N^{\frac{1}{3}} +  3 \bkt{2^{\kappa+1}-2}\bkt{1 + \ceil{\frac{ \log \bkt{\frac{4 M_{l,m} Q_{l,m} V_3}{\delta_1 (\rho-1)}}}{\log(\rho)}}}^3 \\ = N^{\frac{1}{3}} + 3 \bkt{2 N^{\frac{1}{3}}-2}\bkt{1 + \ceil{\frac{ \log \bkt{\frac{4 M_{l,m} Q_{l,m} V_3}{\delta_1 (\rho-1)}}}{\log(\rho)}}}^3 
        \end{split}
    \end{align}
    Therefore, the rank $\Tilde{K} \in \mathcal{O}\bkt{N^{\frac{1}{3}}\bkt{\log\bkt{\frac{M_{l,m} Q_{l,m}}{\delta_1}}}^3}$ and the error in \mvp
    \begin{equation}
         \Bigl\lvert \bkt{K \pmb{q} - \Tilde{K} \pmb{q}}_i \Bigl\lvert <  \sum_{k=1}^{\kappa} 3 \cdot 2^k \delta_1 = 3 \bkt{2^{\kappa +1}-2} \delta_1 < 6 N^{\frac{1}{3}} \delta_1  ,\text{ as }  \kappa \sim \log_8(N)
    \end{equation}
    If we choose $\delta_1 = \frac{\delta}{6 N^{\frac{1}{3}}}$ then
        $ \Bigl\lvert \bkt{K \pmb{q} - \Tilde{K} \pmb{q}}_i \Bigl\lvert < \delta $ and the rank of $\Tilde{K} \in$ $\mathcal{O}\bkt{N^{1/3} \bkt{\log\bkt{\frac{6N^{1/3} M_{l,m} Q_{l,m}}{\delta}}}^2}$
        
    Hence, the rank of $\Tilde{K}$ scales $\mathcal{O}\bkt{N^{1/3} \bkt{\log\bkt{\frac{N^{1/3} M_{l,m} Q_{l,m}}{\delta}}}^2}$ with 
    $\magn{K \pmb{q} - \Tilde{K} \pmb{q}}_{\infty} < \delta$. The numerical rank plots of the edge sharing interaction of the eight functions as described in~\Cref{Preliminaries} are shown in~\cref{fig:res_3d_edge} and tabulated in~\cref{tab:res_3d_edge}.

\cmt{
\begin{figure}[H]
    \centering
    \subfloat[][N vs Rank plot]{\includegraphics[width=0.5\columnwidth]{Images/3d_edge.pdf}}\qquad
    \subfloat[][Numerical rank of different kernel functions]
    {\adjustbox{width=0.4\columnwidth,valign=B,raise=1.4\baselineskip}{%
      \renewcommand{\arraystretch}{1}%
        \begin{tabular}{c|cccc}\hline
            $N$ & $F_1(x,y)$ & $F_2(x,y)$ & $F_3(x,y)$ & $F_4(x,y)$ \\ \hline
            1000 & 189 & 260 & 202 & 302 \\
            3375 & 271 & 371 & 289 & 458 \\
            8000 & 345 & 466 & 366 & 600 \\
            15625 & 416	& 552 & 442 & 741 \\
            27000 & 483	& 637 & 512 & 877 \\
            42875  & 546 & 711 & 580 & 1006 \\
            64000  & 602 & 783 & 649 & 1136 \\
            \hline
        \end{tabular}}
    }
    \caption{Numerical rank $\bkt{tol = 10^{-12}}$ of edge-sharing interaction in $3D$}
    \label{result_3d_edge}
\end{figure}  }      
    \subsection{\textbf{\textit{Rank growth of face-sharing domains}}}
        Consider two face-sharing cubes \(X\) (big red cube) and \(Y\) (big black cube) as shown in~\cref{3d_fac}. The black cube $Y$ is hierarchically subdivided using an adaptive oct tree as shown~\cref{3d_fac}. \href{https://sites.google.com/view/dom3d/face-sharing-domains}{\textbf{The link here provides a better $3D$ view}}. So, we can write $Y$ as




    
        
        \begin{equation}
            Y =  \overbrace{Y_2  \bigcup_{j=1}^{4} Y_{1,j}}^{\text{At level 1}} = \dots = \overbrace{Y_{\kappa +1} \bigcup_{k=1}^{\kappa} \bigcup_{j=1}^{4^k} Y_{k,j}}^{\text{At level } \kappa}
        \end{equation}
       where $\kappa \sim \log_8(N)$ and $Y_{\kappa +1}$ having $4^{\kappa} = N^{2/3}$ particles. Let $\Tilde{K}_{k,j}$ be approximation of ${K_{k,j}}$, then approximation of matrix $K$ is
       \begin{equation}
           \Tilde{K} = \dsum_{k=1}^{\kappa} \dsum_{j=1}^{4^k} \Tilde{K}_{k,j} + K_{\kappa +1}
       \end{equation}
         We choose a $\bkt{p_{k,j}+1}\times\bkt{p_{k,j}+1}\times\bkt{p_{k,j}+1}$ tensor product grid on Chebyshev nodes in the cube $Y_{k,j}$ to obtain the polynomial interpolation of $F(\xb,\yb)$ along $\yb$. Then by~\cref{eq1} the $i^{th}$ component error in \mvp at $j^{th}$ subdivision of the level $k$ is given by (pick $p_{k,j}$ such that the error is less than $\delta_1$)
    \begin{equation}
        \Bigl\lvert \bkt{K_{k,j} \pmb{q}-\Tilde{K}_{k,j} \pmb{q}}_i \Bigl\lvert \leq 4M_{k,j} Q_{k,j} V_3 \frac{\rho^{-p_{k,j}}}{\rho -1}
    \end{equation}
     Now choosing $p_{k,j}$ such that the above error is less than $\delta_1$ (for some $\delta_1>0$), we obtain \\ $p_{k,j}  = \ceil{\frac{ \log \bkt{\frac{4 M_{k,j} Q_{k,j} V_3}{\delta_1 (\rho-1)}}}{\log(\rho)}} \implies  \Bigl\lvert \bkt{K_{k,j} \pmb{q}-\Tilde{K}_{k,j} \pmb{q}}_i \Bigl\lvert < \delta_1$. Let $p_{l,m} = \max \{p_{k,j} : k=1,2,\hdots, \kappa \text{ and } j = 1,2, \hdots, 4^k \}$, which corresponds to $\Tilde{K}_{l,m}$.
    Note that the rank of $K_{\kappa +1}$ is $N^{\frac{2}{3}}$. So, the rank of $\Tilde{K}$ is bounded above by
    \begin{align} \label{eq4}
        \begin{split}
            N^{\frac{2}{3}} + \sum_{k=1}^{\kappa} 4^k \bkt{1 + \ceil{\frac{ \log \bkt{\frac{4 M_{l,m} Q_{l,m} V_3}{\delta_1 (\rho-1)}}}{\log(\rho)}}}^3 = N^{\frac{2}{3}} +  \frac{4}{3}\bkt{4^\kappa-1} \bkt{1 + \ceil{\frac{ \log \bkt{\frac{4 M_{l,m} Q_{l,m} V_3}{\delta_1 (\rho-1)}}}{\log(\rho)}}}^3 \\ = N^{\frac{2}{3}} + \frac{4}{3}\bkt{N^\frac{2}{3}-1} \bkt{1 + \ceil{\frac{ \log \bkt{\frac{4 M_{l,m} Q_{l,m} V_3}{\delta_1 (\rho-1)}}}{\log(\rho)}}}^3 
        \end{split}
    \end{align}
  Therefore, the rank of $\Tilde{K} \in \mathcal{O}\bkt{N^{\frac{2}{3}}\bkt{\log\bkt{\frac{M_{l,m} Q_{l,m}}{\delta_1}}}^3}$  and the error in \mvp
    \begin{equation}
         \Bigl\lvert \bkt{K \pmb{q} - \Tilde{K} \pmb{q}}_i \Bigl\lvert <  \sum_{k=1}^{\kappa} 4^k \delta_1 = \frac{4}{3}\bkt{4^{\kappa}-1} \delta_1 =\frac{4}{3}\bkt{N^\frac{2}{3}-1} \delta_1 < \dfrac{4}{3} N^{\frac{2}{3}} \delta_1  ,\text{ as }  \kappa \sim \log_8(N)
    \end{equation}
    If we choose $\delta_1 =\frac{3 \delta}{4 N^{\frac{2}{3}}}$ then
        $ \Bigl\lvert \bkt{K \pmb{q} - \Tilde{K} \pmb{q}}_i \Bigl\lvert < \delta $ and the rank of $\Tilde{K} \in$ $\mathcal{O}\bkt{N^{2/3} \bkt{\log\bkt{\frac{4N^{2/3} M_{l,m} Q_{l,m}}{3 \delta}}}^3}$
        
    Hence, the rank of $\Tilde{K}$ scales $\mathcal{O}\bkt{N^{2/3} \bkt{\log\bkt{\frac{N^{2/3} M_{l,m} Q_{l,m}}{\delta}}}^3}$ with 
    $\magn{K \pmb{q} - \Tilde{K} \pmb{q}}_{\infty} < \delta$. The numerical rank plots of the face-sharing interaction of the eight functions as described in~\Cref{Preliminaries} are shown in~\cref{fig:res_3d_face,tab:res_3d_face}.
\cmt{
\begin{figure}[H]
    \centering
    \subfloat[][N vs Rank plot]{
        \includegraphics[width=0.5\columnwidth]{Images/3d_face.pdf}
    }\qquad
    \subfloat[][Numerical rank of different kernel functions]
    {\adjustbox{width=0.4\columnwidth,valign=B,raise=1.4\baselineskip}{%
      \renewcommand{\arraystretch}{1}%
        \begin{tabular}{c|cccc}\hline
            $N$ & $F_1(x,y)$ & $F_2(x,y)$ & $F_3(x,y)$ & $F_4(x,y)$ \\ \hline
            1000 & 312 & 422 & 315 & 465 \\
            3375 & 610 & 815 & 614 & 964 \\
            8000 & 1003 & 1314 & 1012 & 1623 \\
            15625 & 1491 & 1931 & 1503 & 2443 \\
            27000 & 2077 & 2641 & 2092 & 3419 \\
            42875  & 2764 & 3465 & 2781 & 4556 \\
            64000  & 3547 & 4373 & 3564 & 5751 \\
            \hline
        \end{tabular}}
    }
    \caption{Numerical rank $\bkt{tol = 10^{-12}}$ of face sharing interaction in $3D$}
    \label{result_3d_face}
\end{figure}
}